\crefname{section}{Section}{Sections}
\crefname{subsection}{\S}{\S\S}
\theoremstyle{plain}
\newtheorem{lemma}{Lemma}[section]
\newtheorem{proposition}[lemma]{Proposition}
\newtheorem{corollary}[lemma]{Corollary}
\newtheorem{theorem}[lemma]{Theorem}
\newtheorem{conjecture}[lemma]{Conjecture}
\theoremstyle{nonumberplain}
\newtheorem{theoremN}{Theorem}
\newtheorem{propositionN}{Proposition}
\theoremstyle{plain}
\newtheorem{definition}[lemma]{Definition}
\newtheorem{remark}[lemma]{Remark}
\newtheorem{convention}[lemma]{Convention}
\crefname{definition}{definition}{definitions}
\crefname{ex}{example}{examples}
\crefname{remark}{remark}{remarks}
\crefname{convention}{convention}{conventions}
\crefname{lemma}{lemma}{lemmas}
\crefname{proposition}{proposition}{propositions}
\crefname{corollary}{corollary}{corollaries}
\crefname{theorem}{theorem}{theorems}
\crefname{assumption}{assumption}{Assumptions}
\crefname{equation}{}{}
\theoremstyle{nonumberplain}
\newtheorem{proof}{Proof}
\newtheorem{proof of main}{Proof of \Cref{th.main}}
\newtheorem{proof of Hall}{Proof of \Cref{th.Hall}}
\newtheorem{proof of sbgp}{Proof of \Cref{th.sbgp}}
\newtheorem{proof of res1}{Proof of \Cref{th.res1}}
\newtheorem{proof of res2}{Proof of \Cref{th.res2}}
\newtheorem{proof of key}{Proof of \Cref{pr.key}}
\newcommand\bC{{\mathbb C}}
\newcommand\bR{{\mathbb R}}
\newcommand\bZ{{\mathbb Z}}
\newcommand\cA{{\mathcal A}}
\newcommand\cC{{\mathcal C}}
\newcommand\cH{{\mathcal H}}
\newcommand\cM{{\mathcal M}}
\newcommand\cW{{\mathcal W}}
\DeclareMathOperator{\id}{id}
\DeclareMathOperator{\pr}{\mathrm{Prob}}
\DeclareMathOperator{\cact}{\cat{CAct}}
\newcommand{\define}[1]{{\em #1}}
\newcommand{\cat}[1]{\textsc{#1}}
\title{On quantum symmetries of compact metric spaces}
\author{Alexandru Chirvasitu\footnote{UC Berkeley, \url{chirvasitua@math.berkeley.edu}}}
\begin{document}

\date{}

\maketitle

\begin{abstract}
An action of a compact quantum group on a compact metric space $(X,d)$ is (D)-isometric if the distance function is preserved by a diagonal action on $X\times X$. We show that an isometric action in this sense has the following additional property: The corresponding action on the algebra of continuous functions on $X$ by the convolution semigroup of probability measures on the quantum group contracts Lipschitz constants. It is, in other words, isometric in another sense due to H. Li, J. Quaegebeur and M. Sabbe; this partially answers a question of D. Goswami.

We also introduce other possible notions of isometric quantum action, in terms of the Wasserstein $p$-distances between probability measures on $X$ for $p\ge 1$, used extensively in optimal transportation. It turns out all of these definitions of quantum isometry fit in a tower of implications, with the two above at the extreme ends of the tower. We conjecture that they are all equivalent.  
\end{abstract}

\noindent {\em Key words: compact quantum group, compact metric space, isometric coaction, Lipschitz seminorm, Wasserstein distance, Hall's marriage theorem}

\tableofcontents

\section*{Introduction}

Operator-algebraic compact quantum groups were introduced in \cite{Wor87} as non-commutative analogues of the $C^*$-algebra of continuous functions on a compact group. The field has developed explosively since then, offering many interesting directions to pursue.  

One popular way of constructing examples has been to start out  with some algebraic or geometric structure, and consider the ``largest'' compact quantum group that acts so as to preserve it. Examples of such structures include operator algebras \cite{Boc95,Wan98,Wan99}, finite-dimensional Hilbert spaces (perhaps equipped with a bilinear form) \cite{DaeWan96}, finite graphs \cite{Bic03}, Riemannian manifolds \cite{BhoGos09}, etc. 

Surprisingly at the time, S. Wang discovered in \cite{Wan98} that even the ``half classical'' situation when a quantum group acts on an ordinary space (in that case a finite one) can lead to interesting and non-trivial objects: The quantum automorphism group of a finite set, introduced in that paper, is truly quantum (i.e. not the function algebra on a finite group) provided the set acted upon is not too small. 

In the same spirit, quantum automorphism groups of finite metric spaces were constructed in \cite{Ban05}. Banica's notion of what it means for a quantum group action to be isometric can be adapted to possibly infinite compact metric spaces. This is done in \cite{Gos12}, where D. Goswami studies the problem of whether or not a universal quantum isometric action exists in this infinite setting.   

This brings us to the topic of the paper: actions of compact quantum groups on (classical) compact metric spaces. 

Call a quantum group action on a compact metric space $(X,d)$ (D)-isometric if it satisfies the conditions from \cite{Gos12}. Roughly speaking, this entails that the distance function $d:X\times X\to\bR_{\ge 0}$ be invariant under a certain ``diagonal'' action of the quantum group on $X\times X$ (see \Cref{subse.prel_metric} for precise definitions).

One constant theme in non-commutative geometry is that the classical concepts to be generalized often do not have one, canonical, ``best'' formulation in the quantum or non-commutative setting. As a consequence, classical notions sometimes bi(or tri or multi)furcate into competing non-commutative analogues that are not always obviously equivalent. Quantum symmetries of compact metric spaces are a case in point; there are alternative attempts in the literature to define what it means for a quantum action on a compact metric space $(X,d)$ to be isometric. Another such proposal (apart from the one alluded to in the previous paragraph) appears in \cite{QuaSab12}. Again, see, \Cref{subse.prel_metric} for a more detailed discussion. 

Since a compact quantum group is by definition a $C^*$-algebra $A$ (see \Cref{def.cqg} below) with some additional structure, we can think of the states of $A$ as probability measures on the underlying non-commutative space of the quantum group. Moreover, the comultiplication $A\to A\otimes A$ induces an associative binary operation on the set of states $S(A)$, making it into a semigroup. 

An action of the compact quantum group $A$ on a compact space $X$ induces a left action of $S(A)$ on the set $\cC(X)$ of continuous functions on $X$, as well as a right action of $S(A)$ on the set $\pr(X)$ of probability measures on $X$. 

If $X$ is equipped with a metric $d$, write $L(f)$ for the value of the Lipschitz seminorm of $f\in\cC(X)$:
\[
 L(f) = \sup_{x\ne y}\frac{|f(x)-f(y)|}{d(x,y)}.  
\]
We call $L(f)$ the Lipschitz constant of $f$. It can be infinity, but the subspace of Lipschitz functions (those with finite Lipschitz constant) is dense in $\cC(X)$. We say that the action of the quantum group on $X$ is (Lip)-isometric if for any  $\varphi\in S(A)$ and any $f\in\cC(X)$ the Lipschitz constant of $\varphi\triangleright f$ is not larger than that of $f$. 

It is shown in \cite[Theorem 3.5]{QuaSab12} that quantum group actions on finite metric spaces are (Lip)-isometric if and only if they are (D)-isometric. The question of whether or not this is true  for general compact metric spaces is posed in \cite{Gos12}. We are not able to answer it fully, but one of the implications is part of \Cref{th.main}:

\begin{propositionN}
A (D)-isometric quantum action on a compact metric space is (Lip)-isometric. 
\end{propositionN}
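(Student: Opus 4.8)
The plan is to unwind both definitions in terms of the underlying $C^*$-algebraic data and translate between them via the duality between the left action on $\cC(X)$ and the right action on $\pr(X)$, or more concretely via the coaction $\alpha:\cC(X)\to \cC(X)\otimes A$. Recall that (D)-isometry says that the diagonal coaction on $\cC(X\times X)\cong\cC(X)\otimes\cC(X)$ fixes the distance function $d$, or at least leaves invariant the relevant subspace/kernel structure encoding $d$; the precise formulation from \cite{Gos12} should be stated in \Cref{subse.prel_metric}. My first step is to reformulate (D)-isometry as the statement that $\alpha$ intertwines the ``difference quotient'' data: for $f\in\cC(X)$, consider $df\in\cC(X\times X)$ defined by $df(x,y)=f(x)-f(y)$, and note that $L(f)\le c$ iff $|df|\le c\cdot d$ pointwise on $X\times X$. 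The key is that $\alpha^{(2)}:=(\id\otimes\id\otimes m)(\alpha\otimes\alpha):\cC(X\times X)\to\cC(X\times X)\otimes A$ is the diagonal coaction, it is a $*$-homomorphism, hence positive and unital, and it satisfies $\alpha^{(2)}(df)=(\mathrm{something})$ relating to $\alpha(f)$ while $\alpha^{(2)}(d)=d\otimes 1$ by (D)-isometry.

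Concretely, I would argue as follows. Fix $f\in\cC(X)$ with $L(f)\le 1$, so $-d\le df\le d$ in $\cC(X\times X)$ (using $\le$ for the order coming from $\bR$-valued functions; strictly one works with $|df|\le d$, but it is cleaner to split into the two inequalities $d\pm df\ge 0$). Apply the diagonal coaction $\alpha^{(2)}$: since it is a positive unital map and sends $d$ to $d\otimes 1$, we get $d\otimes 1 \pm \alpha^{(2)}(df)\ge 0$ in $\cC(X\times X)\otimes A$. Now the crucial identity is that $\alpha^{(2)}(df)$ can be rewritten in terms of $\alpha(f)$: writing $\alpha(f)=\sum f_i\otimes a_i$ (formally), one has $\alpha^{(2)}(df)(x,y) = \sum (f_i(x)-f_i(y))\otimes a_i$, i.e. it is $(\alpha(f))$ evaluated at $x$ minus $(\alpha(f))$ evaluated at $y$ in the first tensor leg. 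Then apply $\mathrm{ev}_x\otimes\mathrm{ev}_y\otimes\varphi$ for $\varphi\in S(A)$ and $x\ne y$: positivity gives $d(x,y)\pm\bigl((\varphi\triangleright f)(x)-(\varphi\triangleright f)(y)\bigr)\ge 0$, because $(\varphi\triangleright f)(x) = (\mathrm{ev}_x\otimes\varphi)\alpha(f)$. Dividing by $d(x,y)$ and taking the supremum over $x\ne y$ yields $L(\varphi\triangleright f)\le 1 = L(f)$. The general case follows by homogeneity of $L$.

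The main obstacle I anticipate is handling the precise form of the (D)-isometry hypothesis and the analytic subtleties of ``evaluating'' the coaction. The definition in \cite{Gos12} likely does not say literally ``$\alpha^{(2)}(d)=d\otimes 1$'' — $d$ need not lie in $\cC(X)\otimes\cC(X)$ algebraically, only in the completed tensor product $\cC(X\times X)$, and the formulation may instead be in terms of an ideal or in terms of spectral subspaces of the coaction, or in terms of the induced action on $\pr(X)$ via $\int d(x,y)\,d(\mu\times\nu) $ being suitably monotone. So one real step is a lemma showing that the \cite{Gos12} definition is equivalent to (or at least implies) the positivity statement $d\otimes 1 - \alpha^{(2)}(df)\ge 0$ that I use; this may require approximating $d$ by elements of the algebraic tensor product, or passing through the functional-analytic characterization of positivity by pairing with states of $A\otimes\cC(X\times X)$. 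Another technical point is justifying $\mathrm{ev}_x\otimes\mathrm{ev}_y\otimes\varphi$ as a state and commuting it past the (possibly infinite) sum representing $\alpha(f)$; this is standard for $C^*$-coactions but should be stated carefully. Once the correct reformulation of (D)-isometry is in hand, the positivity argument above is short and robust.
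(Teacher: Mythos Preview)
Your argument has a genuine gap at its core: the ``diagonal coaction'' $\alpha^{(2)}=(\id\otimes\id\otimes m)\circ(\sigma_{23})\circ(\alpha\otimes\alpha)$ is \emph{not} a $*$-homomorphism when $A$ is noncommutative, so it is not positive, and your inequality $d\otimes 1\pm\alpha^{(2)}(df)\ge 0$ does not follow from $d\pm df\ge 0$. To see the failure of multiplicativity, write $\alpha(f)=\sum f_i\otimes a_i$, $\alpha(g)=\sum g_j\otimes b_j$; then $\alpha^{(2)}((f\otimes g)(f'\otimes g'))$ has last tensor leg $a_i a'_k b_j b'_l$, whereas $\alpha^{(2)}(f\otimes g)\alpha^{(2)}(f'\otimes g')$ has last leg $a_i b_j a'_k b'_l$. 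These agree only if the images of the slice maps commute in $A$, which is false already for Wang's quantum permutation groups. This is not a technicality you can patch by approximation: it is precisely why there is no straightforward diagonal action of a genuinely quantum group on $X\times X$, and why the paper's \Cref{def.D_isometric} is formulated via the antipode as $\rho(d_y)(x)=\kappa(\rho(d_x)(y))$ rather than as ``$\alpha^{(2)}(d)=d\otimes 1$''. Your computation of $\alpha^{(2)}(df)$ is correct, and the evaluation step is fine; the problem is solely that the positivity you invoke is unavailable.

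The paper's route is quite different. It passes to the $W^*$ envelope, rephrases (D)-isometry as an identity $a_{x;B(y,r)}=\kappa(a_{y;B(x,r)})$ between projections (\Cref{le.D_in_terms_of_a}), and uses this together with the antipode to prove an orthogonality lemma (\Cref{le.aa=0}) for projections coming from sets whose mutual distances avoid $d(x,y)$. That in turn verifies the hypothesis of a measure-theoretic Hall marriage theorem (\Cref{th.Hall}), producing for every $\psi\in S(A)$ and $x,y\in X$ a coupling in $\Pi(x\triangleleft\psi,y\triangleleft\psi)$ supported on pairs \emph{exactly} $d(x,y)$ apart. This is strictly stronger than (Lip): it yields all (Lip$_p$) at once (\Cref{cor.main}). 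The antipode is doing essential work here, substituting for the missing diagonal action; your sketch never invokes $\kappa$, which is a sign that the (D) hypothesis as actually defined is not being used.
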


We can say more though.

Embed $X\subset\pr(X)$ in the obvious way, by regarding points as Dirac delta measures. The distance function $d$ on $X$ can be extended to a metric on $\pr(X)$ in many ways so as to induce the weak$^*$ topology on probability measures. Some popular extensions of $d$ with this property are the so-called Wasserstein metrics on $\pr(X)$ (see \Cref{subse.prel_Was}): 

For $\mu,\nu\in\pr(X)$ and $p\ge 1$ define
\[
 W_p(\mu,\nu) = \inf_{\pi\in\Pi(\mu,\nu)}\left(\int_{X\times X} d(x,y)^p\ \text{d}\pi\right)^{\frac 1p},
\] 
where $\Pi(\mu,\nu)$ is the space of probability measures on $X\times X$ with marginals $\mu$ and $\nu$ on the two Cartesian factors $X$. It turns out that being (Lip)-isometric can be rephrased as follows: The action of any $\varphi\in S(A)$ on $\pr(X)$ contracts the $W_1$ metric. For this reason, we also say `(Lip$_1$)-isometric' instead. 

It is now compelling to define condition (Lip$_p$) for a quantum action on $(X,d)$ by analogy with (Lip$_1$): If $A$ is the $C^*$-algebra of the compact quantum group, simply demand that any $\varphi\in S(A)$ contract the metric $W_p$ on $\pr(X)$. 

The content of \Cref{cor.main} can now be phrased as follows:

\begin{theoremN}
For a quantum action on a compact metric space condition (Lip$_p$) is stronger with larger $p\ge 1$, and condition (D) is stronger than all (Lip$_p$). 
\end{theoremN}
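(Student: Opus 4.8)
The plan is to establish the two assertions separately. For the comparison among the $(\mathrm{Lip}_p)$ conditions, the key observation is that the Wasserstein metrics are themselves ordered: by Jensen's inequality (or H\"older), $W_p \le W_q$ pointwise on $\pr(X)\times\pr(X)$ whenever $1\le p\le q$. This ordering alone is not quite enough to conclude a contraction for $W_p$ from one for $W_q$, since the two metrics are different; the right way to exploit it is to note that the coupling realizing $W_q(\mu,\nu)$ transports, under the action of $\varphi\in S(A)$ on $\pr(X)$, to a coupling of $\varphi\triangleright\mu$ and $\varphi\triangleright\nu$ whose $L^p$-cost is controlled. Concretely, I would fix $\varphi$, fix $\mu,\nu$, choose an optimal $\pi\in\Pi(\mu,\nu)$ for $W_q$, push it through the induced action on $\pr(X\times X)$ to get $\pi'$ with the correct marginals $\varphi\triangleright\mu$, $\varphi\triangleright\nu$, and then bound $\bigl(\int d^p\,\mathrm{d}\pi'\bigr)^{1/p}$. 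The assumption that the action is $(\mathrm{Lip}_q)$-isometric enters precisely here: it says the $L^q$-cost does not increase, and then $W_p\le W_q$ applied to the pushed-forward measures closes the loop. Actually the cleaner route is the dual one via Lipschitz functions for $p=1$ and, for general $p$, to observe directly that $(\mathrm{Lip}_q)$ for all $q\le p'$ is what the statement claims to compare, so I should be careful to extract from $(\mathrm{Lip}_q)$-isometry the honest inequality $W_p(\varphi\triangleright\mu,\varphi\triangleright\nu)\le W_p(\mu,\nu)$ using the coupling-transport argument above rather than a naive chain of metric comparisons.

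For the second assertion, that (D) implies every $(\mathrm{Lip}_p)$, I would leverage the Proposition already proved in the excerpt: a (D)-isometric action is $(\mathrm{Lip})$-isometric, i.e.\ $(\mathrm{Lip}_1)$-isometric. To upgrade from $(\mathrm{Lip}_1)$ to $(\mathrm{Lip}_p)$ for a (D)-isometric action, the natural idea is to use the (D) condition directly at the level of $X\times X$: the defining feature of (D)-isometry is that the diagonal action on $X\times X$ leaves the distance function $d$ invariant as an element of $\cC(X\times X)$. Given an optimal coupling $\pi\in\Pi(\mu,\nu)$ for $W_p$, the diagonal action sends $\pi$ to a probability measure on $X\times X$ with the right marginals, and invariance of $d^p$ under this action (which follows from invariance of $d$, since functional calculus commutes with the coaction on the commutative algebra $\cC(X\times X)$) gives $\int d^p\,\mathrm{d}\pi' = \int d^p\,\mathrm{d}\pi$, hence $W_p(\varphi\triangleright\mu,\varphi\triangleright\nu)^p \le \int d^p\,\mathrm{d}\pi' = W_p(\mu,\nu)^p$. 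This is actually cleaner than going through $(\mathrm{Lip}_1)$ and in fact shows (D) implies $(\mathrm{Lip}_p)$ for every $p\ge 1$ uniformly.

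The main obstacle, I expect, is the bookkeeping around the two induced actions and their compatibility: one must verify carefully that the right action of $S(A)$ on $\pr(X)$, the induced action on $\pr(X\times X)$, and the diagonal coaction on $\cC(X\times X)$ are mutually compatible in the precise sense needed — that pushing a coupling forward does land it in $\Pi(\varphi\triangleright\mu,\varphi\triangleright\nu)$, and that the marginal maps intertwine the relevant actions. This is essentially a diagram-chase using the counit and comultiplication axioms together with the factorization of the diagonal action through the two coordinate actions, but it is the step where a sign or a slot can easily go wrong. Once that compatibility is nailed down, both implications in the tower follow from the short arguments above; the strictness of the hierarchy (that the conditions are genuinely "stronger" and not merely "at least as strong") I would address by exhibiting, or at least pointing to, the constraint that the metrics $W_p$ are pairwise distinct, so that contraction in one does not formally entail contraction in another, leaving the conjecture of equivalence as the stated open problem.
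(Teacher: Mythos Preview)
Your approach for (D) $\Rightarrow$ (Lip$_p$) has a genuine gap that is not just bookkeeping. You want to push a coupling $\pi\in\Pi(\mu,\nu)$ forward via a ``diagonal action'' of $S(A)$ on $\pr(X\times X)$ and use invariance of $d$. The problem is that for a noncommutative quantum group there is no diagonal coaction $\cC(X\times X)\to\cC(X\times X)\otimes A$ in the required sense: the natural candidate sends $f\otimes g$ to $f_{(0)}\otimes g_{(0)}\otimes f_{(1)}g_{(1)}$, and this map factors through multiplication $A\otimes A\to A$, which is neither a $*$-homomorphism nor completely positive when $A$ is noncommutative. Consequently the induced map on functionals need not send probability measures to probability measures, and the pushed-forward ``coupling'' $\pi'$ is not guaranteed to lie in $\pr(X\times X)$, let alone in $\Pi(\mu\triangleleft\psi,\nu\triangleleft\psi)$. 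The paper even remarks that the diagonal-action interpretation of (D) from \cite{Gos12} requires extra hypotheses and explicitly avoids using it. Your argument works verbatim for classical compact groups (where $A$ is commutative) but breaks down exactly in the genuinely quantum case.

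The paper's route is entirely different and considerably more elaborate. It first reduces (Lip$_p$) to its restriction to Dirac measures via Kantorovich duality (\Cref{pr.Lipp_alt}); this already handles the (Lip$_p$) $\Rightarrow$ (Lip$_q$) implication for $p\ge q$ in one line using $W_q\le W_p$ (\Cref{cor.Lipp_implications}), with no coupling transport needed. For (D) $\Rightarrow$ (Lip$_p$) it passes to the enveloping $W^*$ picture, rephrases (D) as an identity between projections $a_{x;B(y,r)}=\kappa(a_{y;B(x,r)})$ (\Cref{le.D_in_terms_of_a}), proves an orthogonality lemma $a_{x;S}a_{y;T}=0$ whenever $S\times T$ avoids a $\delta$-neighborhood of $\{d=d(x,y)\}$ (\Cref{le.aa=0}), and then invokes a continuous Hall-type marriage theorem (\Cref{th.Hall}) to produce, for each $x,y,\psi$, a coupling in $\Pi(x\triangleleft\psi,y\triangleleft\psi)$ supported on pairs \emph{exactly} $d(x,y)$ apart. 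That coupling witnesses $W_p(x\triangleleft\psi,y\triangleleft\psi)\le d(x,y)$ for every $p$ simultaneously. The Hall theorem is doing the work that your nonexistent diagonal action was supposed to do: it manufactures the right coupling without ever acting on $X\times X$.

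A minor point: ``stronger'' in the statement means ``implies'', not ``strictly stronger''; the paper conjectures all the conditions are equivalent, so your final paragraph about exhibiting strictness is beside the point.
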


As indicated above, universal objects are important sources of examples of compact quantum groups. One often looks for the largest quantum group acting faithfully (in some appropriate sense) on a certain object. It is often not clear when such quantum groups exist. In \cite{Wan98}, for instance, it transpired that a finite-dimensional $C^*$-algebra has a quantum automorphism group if it is commutative, but not otherwise. A finite-dimensional $C^*$-algebra equipped with a trace, on the other hand, always has a quantum automorphism group. 

As far as metric spaces are concerned, finite ones always have quantum automorphism groups \cite{Ban05}, as do those embeddable in some Euclidean space $\bR^n$ \cite{Gos12}. It is unclear whether all compact metric spaces have one though. 

Here we work with a somewhat weaker notion of universality, following \cite{QuaSab12}. One of the problems posed in that paper can be paraphrased as follows: Given a faithful action of a quantum group on a compact metric space, is there a largest quantum subgroup that acts isometrically? 

The question in \cite{QuaSab12} applies to (Lip)-isometric actions, and is not answered there fully. We show in \Cref{th.sbgp} below that in the (D)-isometric case the answer is always affirmative.  

We make a few conjectures related to all of this in \Cref{se.main,se.sbgp}.

The paper is organized as follows: 

\Cref{se.prel} contains a recollection of the main ingredients we need, as well as some auxiliary results. 

In \Cref{se.Hall} we prove a measure-theoretic version of Hall's marriage theorem on the existence of matchings in bipartite graphs. It generalizes Hall's and other combinatorial results of the same nature, and will be used later in the proof of \Cref{th.main}.  

\Cref{se.main} contains the proof of the main result, \Cref{th.main}. In addition, as a consequence of the discussion in that section, we show in \Cref{pr.injectivity} that the underlying map $\rho:\cC(X)\to\cC(X)\otimes A$ of a (D)-isometric coaction is one-to-one.   

Finally, in \Cref{se.sbgp} we prove that every quantum group acting faithfully on a compact metric space has a largest quantum subgroup acting (D)-isometrically (\Cref{th.sbgp}).

\subsection*{Acknowledgements}

I am indebted to Debashish Goswami for valuable discussions on the contents of \cite{Gos12}.

\section{Preliminaries}\label{se.prel}

We deal below both with $C^*$ and von Neumann (or $W^*$) quantum groups, so we are assuming some basic background on operator algebras, as covered, say, in \cite{Tak02}. 

For a Hilbert space $\cH$ let $B(\cH)$ be the algebra of all bounded operators on $\cH$, equipped with the usual norm and $*$ operation. For our purposes it will be sufficient to recall that a $C^*$-algebra is a norm-closed $*$-subalgebra of $B(\cH)$ for some Hilbert space $\cH$. 

The Banach space $B(\cH)$ turns out to be the dual of a (unique) Banach space, and can hence be equipped with a weak$^*$ topology. A von Neumann or $W^*$-algebra is a weak$^*$-closed $*$-subalgebra of some $B(\cH)$ (in particular, it is also a $C^*$-algebra). 

We will not make much use of the usual abstract characterizations of $C^*$ and $W^*$-algebras (e.g. \cite[I.1.2,III.3.1]{Tak02}). Suffice it to say that a von Neumann algebra is a $C^*$-algebra which admits a predual (i.e. it is the continuous dual of a Banach space). Just as for $B(\cH)$, the predual is unique, so that this is a property rather than an additional piece of structure (\cite[III.3.9]{Tak02}). We use `von Neumann algebra' and `$W^*$-algebra' interchangeably, breaking with the more nuanced use in \cite{Tak02}.  

Throughout, a $C^*$ morphism between $C^*$-algebras is a norm continuous $*$-preserving algebra homomorphism, while a $W^*$ morphism between $W^*$-algebras is a norm continuous, $*$-preserving algebra homomorphism that is also \define{normal}, i.e. continuous with respect to the weak$^*$ topologies induced by the preduals.  

All algebras and homomorphisms in this paper are unital. 

The symbol `$\otimes$' changes meaning depending on what types of objects it is placed between. If $A$ and $B$ are $C^*$-algebras, then $A\otimes B$ denotes the minimal (or injective) tensor product (\cite[IV.4.8]{Tak02}). It is again a $C^*$-algebra, in which the ordinary, algebraic tensor product of $A$ and $B$ is norm-dense. 

If $A$ and $B$ are von Neumann algebras, then $\otimes$ is the spatial tensor product, as in \cite[IV.5.1]{Tak02}. It is a again a von Neumann algebra.   

On those rare occasions when $A$ and $B$ are just plain algebras and no topology is involved, $\otimes$ is the algebraic tensor product. 
 
We denote by $A^*$ the appropriate set of norm continuous functionals on $A$: All of them if $A$ is a $C^*$-algebra, and those that are additionally normal if $A$ is von Neumann. A \define{state} on a $C^*$ or $W^*$-algebra $A$ is a unital functional $\varphi\in A^*$, positive in the sense that $\varphi(a^*a)$ is non-negative for every $a\in A$, and normal (in the same sense as before) in the $W^*$ case. We write $S(A)$ for the set of states.  

Finally, `measure' always means finite Borel measure on a metrizable compact Hausdorff space. In particular, all measures are regular.

\subsection{Quantum groups and actions}\label{subse.prel_cqg}

We use the same notion of $C^*$-algebraic compact quantum group as many of the authors we have cited, e.g. \cite{Wan98,QuaSab12,Gos12}. The survey \cite{KusTus99}, for instance, provides sufficient background. We will generally drop the word `compact' from `compact quantum group'. 

In the von Neumann algebra setting we simply replicate the definition with only obvious modifications.

\begin{definition}\label{def.cqg}
A $C^*$ (respectively $W^*$ or von Neumann) \define{quantum group} $(A,\Delta)$ is a unital $C^*$(resp. $W^*$)-algebra equipped with a $C^*$(resp. $W^*$)-algebra homomorphism $\Delta:A\to A\otimes A$ satisfying the following properties:
\begin{enumerate}
\renewcommand{\labelenumi}{(\arabic{enumi})}
 \item $\Delta$ is coassociative, in the sense that the diagram
   \[
     \tikz[anchor=base]{
       \path (0,0) node (1) {$\scriptstyle A$} +(2,0) node (2) {$\scriptstyle A\otimes A$} +(0,-2) node (3) {$\scriptstyle A\otimes A$} +(2,-2) node (4) {$\scriptstyle A\otimes A\otimes A$};
       \draw[->] (1) -- (2) node[pos=.5,auto] {$\scriptstyle \Delta$};
       \draw[->] (1) -- (3) node[pos=.5,auto,swap] {$\scriptstyle \Delta$};
       \draw[->] (2) -- (4) node[pos=.5,auto] {$\scriptstyle \Delta\otimes\id$};
       \draw[->] (3) -- (4) node[pos=.5,auto] {$\scriptstyle \id\otimes\Delta$};
     }
   \]commutes;
 \item The subspaces
   \[
     \text{linear span}\{(a\otimes 1)\Delta(b)\ |\ a,b\in A\}
   \]
   and
   \[
     \text{linear span}\{(1\otimes a)\Delta(b)\ |\ a,b\in A\}
   \]
   are norm (resp. $W^*$) dense in $A\otimes A$. 
\end{enumerate} 
\end{definition}

We will often abuse terminology by just writing $A$ for $(A,\Delta)$. 

Note that the comultiplication $\Delta$ makes $A^*$ into a (possibly non-unital) associative ring: If $\varphi$ and $\psi$ are functionals on $A$, then $\varphi\psi$ is the composition
\begin{center}
\begin{tikzpicture}[auto]
  \node (1) at (0,0) {$\scriptstyle A$};
  \node (2) at (2,0) {$\scriptstyle A\otimes A$};
  \node (3) at (4,0) {$\scriptstyle \bC$};
  \draw[->] (1) to node {$\scriptstyle \Delta$} (2);
  \draw[->] (2) to node {$\scriptstyle \varphi\otimes\psi$} (3);
  \draw[->,bend right=30] (1) to node [swap] {$\scriptstyle \varphi\psi$} (3);
\end{tikzpicture}
\end{center}  
It is a state whenever $\varphi$ and $\psi$ are, meaning that $S(A)\subset A^*$ is a multiplicative sub-semigroup.

$C^*$ quantum groups $(A,\Delta)$ come equipped with a distinguished state $h$ playing the same role that the Haar measure does for classical compact groups: For any functional $\varphi\in A^*$, 
\begin{equation}\label{eq.haar}
 h\varphi=\varphi h=h.
\end{equation} 
This is shown in  \cite[4.1]{Wor87} in the separable case and \cite[2.4]{Dae95} in general). The quantum group is \define{reduced} if $h$ is faithful. We can always pass to a reduced version of $A$ by taking the image of the GNS representation of $A$ with respect to $h$. The quantum groups we work with will often be reduced in this sense. 

$W^*$ quantum groups come about in this paper in one way: By starting out with a $C^*$ quantum group, and then taking the $W^*$ closure of the GNS representation corresponding to the Haar state $h$. The resulting von Neumann algebra inherits $h$ as a faithful (normal) functional satisfying the same invariant condition \Cref{eq.haar}.

\subsection{Actions on spaces}\label{subse.prel_sp}

Quantum groups are interesting because they act on various objects, such as ``quantum spaces''. The latter are possibly non-commutative operator algebras, thought of as algebras of functions on some kind of space. Since passing from spaces to algebras is a contravariant procedure, the relevant definition is as follows:

\begin{definition}\label{def.cqg_act}
Let $(A,\Delta)$ be a $C*$ ($W^*$) quantum group, and $B$ a $C^*$(resp $W^*$)-algebra. A \define{right coaction} of $A$ on $B$ is a $C^*$ (resp. $W^*$) homomorphism $\rho:B\to B\otimes A$ such that
\begin{enumerate}
\renewcommand{\labelenumi}{(\arabic{enumi})}
 \item The diagram
   \[
     \tikz[anchor=base]{
       \path (0,0) node (1) {$\scriptstyle B$} +(2,0) node (2) {$\scriptstyle B\otimes A$} +(0,-2) node (3) {$\scriptstyle B\otimes A$} +(2,-2) node (4) {$\scriptstyle B\otimes A\otimes A$};
       \draw[->] (1) -- (2) node[pos=.5,auto] {$\scriptstyle \rho$};
       \draw[->] (1) -- (3) node[pos=.5,auto,swap] {$\scriptstyle \rho$};
       \draw[->] (2) -- (4) node[pos=.5,auto] {$\scriptstyle \rho\otimes\id$};
       \draw[->] (3) -- (4) node[pos=.5,auto] {$\scriptstyle \id\otimes\Delta$};
     }
   \]commutes;
 \item The subspace
   \[
     \text{linear span}\{(1\otimes a)\rho(b)\ |\ a\in A\ b\in B\}
   \]   
   is dense in $B\otimes A$.
\end{enumerate} 
\end{definition}

We focus mostly on the situation when the group is quantum, but the space is classical, i.e. in the above definition $B$ is either the $C^*$-algebra $\cC(X)$ of all complex-valued continuous functions on a compact Hausdorff space $X$, or its enveloping von Neumann algebra $\cW(X)$ \cite[III.2.4]{Tak02}. 

If $A$ were the algebra $\cC(G)$ of continuous functions on the compact group $G$, then a right coaction of $A$ on $\cC(X)$ would precisely correspond to a \define{left} action of $G$ on $X$ in the following way: If the action $G\times X\to X$ is written as $(x,g)\mapsto xg$, then the corresponding coaction  
\[
 \rho:\cC(X)\to\cC(X)\otimes\cC(G)\cong\cC(X\times G)
\] 
is defined by
\[
 \rho(f)(x,g) = f(xg). 
\]
In other words, coactions are the correct notion of action upon dualizing by passing from spaces to algebras. This justifies the following.

\begin{definition}\label{def.cqg_act_bis}
Let $(A,\Delta)$ be a $C^*$ (resp. $W^*$) quantum group and $X$ a compact Hausdorff topological space. A (right) action of $A$ on $X$ is a coaction of $A$ on $\cC(X)$ (resp. $\cW(X)$) as in \Cref{def.cqg_act}. 
\end{definition}

Now let $\rho$ be an action of $A$ on $X$ as in \Cref{def.cqg_act_bis}, and let $\varphi\in A^*$ (to fix ideas, say we are in the $C^*$ case). The composition
\begin{center}
\begin{tikzpicture}[auto]
  \node (1) at (0,0) {$\scriptstyle \cC(X)$};
  \node (2) at (2,0) {$\scriptstyle \cC(X)\otimes A$};
  \node (3) at (4,0) {$\scriptstyle \cC(X)$};
  \draw[->] (1) to node {$\scriptstyle \rho$} (2);
  \draw[->] (2) to node {$\scriptstyle \id\otimes\varphi$} (3);
\end{tikzpicture}
\end{center}  
 is a self-map on $\cC(X)$, which we denote by $\varphi\triangleright$. As $\varphi$ ranges over $A^*$, these maps constitute a left action of the ring $A^*$ on the vector space $\cC(X)$. Everything works out verbatim in the $W^*$ case provided we substitute $\cW(X)$ for $\cC(X)$. The restriction of this action to the sub-semigroup $S(A)\subset A^*$ will again be denoted by $\triangleright$.  
 
Similarly, the ring $A^*$ acts on the right on the Banach space $\cM(X)$ of complex measures on $X$ (which can be identified with both $\cC(X)^*$ and $\cW(X)^*$). In the $C^*$ setup, for instance, the result $\mu\triangleleft\varphi$ of acting with $\varphi\in A^*$ on $\mu\in\cM(X)$ is a measure defined by   
\begin{center}
\begin{tikzpicture}[auto]
  \node (1) at (0,0) {$\scriptstyle \cC(X)$};
  \node (2) at (2,0) {$\scriptstyle \cC(X)\otimes A$};
  \node (3) at (4,0) {$\scriptstyle \bC$};
  \draw[->] (1) to node {$\scriptstyle \rho$} (2);
  \draw[->] (2) to node {$\scriptstyle \mu\otimes\varphi$} (3);
  \draw[->,bend right=30] (1) to node [swap] {$\scriptstyle\mu\triangleleft\varphi$} (3);
\end{tikzpicture}
\end{center}  
$\triangleleft$ restricts to an action (denoted by the same symbol) of $S(A)$ on $\pr(X)=S(\cC(X))=S(\cW(X))$.

Note that both $\triangleleft$ and $\triangleright$ are affine on $S(A)$ in the sense that 
\[
 (\lambda\psi_1+(1-\lambda)\psi_2)\triangleright f = \lambda(\psi_1\triangleright f)+(1-\lambda)(\psi_2\triangleright f),\ \forall\lambda\in[0,1],\ \forall \psi_i\in S(A) 
\]
and similarly for $\triangleleft$. 

We also need to know what it means for an action to be faithful.

\begin{definition}\label{def.faithful}
A coaction $\rho:B\to B\otimes A$ of a $C^*$ or $W^*$ quantum group $A$ on a $C^*$ or $W^*$-algebra $B$ is \define{faithful} if the set
\[
 \{(\mu\otimes\id)(\rho(b))\ |\ b\in B,\ \mu\in S(B)\}\subset A
\]
generates $A$ as a $C^*$(resp. $W^*$)-algebra.  
\end{definition}

$C^*$ quantum groups acting faithfully on a classical compact space have one property that will be crucial below: They admit an antipode, i.e. a bounded, multiplication-reversing map $\kappa:A\to A$ that plays the same role as the map $f\mapsto f(\bullet^{-1})$ for continuous functions $f$ on an ordinary compact group.  

More precisely: 

Every $C^*$ quantum group $A$ has a unique dense unital $*$-subalgebra $\cA$ with the property that $\Delta(\cA)$ is contained in the algebraic tensor product $\cA\otimes\cA$ (see e.g. \cite[3.1.7]{KusTus99}). The space $\mathrm{End}(\cA)$ of all linear self-maps on $\cA$ is then an associative algebra, with multiplication $(f,g)\mapsto fg$ defined by
\begin{center}
\begin{tikzpicture}[auto]
  \node (1) at (0,0) {$\scriptstyle \cA$};
  \node (2) at (2,0) {$\scriptstyle \cA\otimes\cA$};
  \node (3) at (4,0) {$\scriptstyle \cA\otimes\cA$};
  \node (4) at (6,0) {$\scriptstyle \cA$};
  \draw[->] (1) to node {$\scriptstyle \Delta$} (2);
  \draw[->] (2) to node {$\scriptstyle f\otimes g$} (3);
  \draw[->] (3) to node {$\scriptstyle \text{multiply}$} (4);
  \draw[->,bend right=30] (1) to node [swap] {$\scriptstyle fg$} (4);
\end{tikzpicture}
\end{center}

There is a $*$-algebra homomorphism $\varepsilon:\cA\to\bC$ such that the map $\cA\to\cA$ defined by $a\mapsto\varepsilon(a)1$ is a unit for the algebra structure on $\mathrm{End}(\cA)$ introduced above. Moreover, there is a (left and right) inverse $\kappa\in\mathrm{End}(\cA)$ to the identity map $\id_{\cA}$. In standard terminology, $\cA$ is a \define{Hopf $*$-algebra} with \define{counit} $\varepsilon$ and \define{antipode} $\kappa$. 

In general, $\kappa$ does not extend continuously to a self-map of the $C^*$-algebra $A$; if it does, $(A,\Delta)$ is said to be \define{of Kac type}. As a consequence of \cite[3.23]{Hua12} $A$ is of Kac type if it acts faithfully on a compact space. The extension, denoted by $\kappa$ again, is involutive, reverses multiplication and preserves the $*$ operation.

For a Kac type $C^*$ quantum group, the antipode $\kappa$ extends to a normal (involutive, multiplication-reversing) self-map of the corresponding $W^*$ quantum group obtained via the GNS construction for the Haar state. Although we do use $W^*$ for convenience in \Cref{se.main}, the ones we use are all in a sense ``built out of'' $C^*$ actions via this GNS construction. For this reason, we make

\begin{convention}\label{conv.C*}
Unless specified otherwise, quantum groups and actions are assumed to be $C^*$. 
\end{convention}

\subsection{Actions on metric spaces}\label{subse.prel_metric}

We now focus on quantum actions on compact spaces $X$ whose topology is induced by a metric $d$. We recall here the two ways in which the action might be compatible with $d$ referred to in the previous section. 

Throughout, $\rho$ stands for an action by a quantum group $(A,\Delta)$ on $X$. We always assume that the action is faithful, and hence $A$ admits an antipode $\kappa$. 

One way to phrase the condition that an action of a compact group $G$ on $X$ be isometric is
\begin{equation}\label{eq.classical_isometry}
 d(xg,y) = d(x,yg^{-1}),\ \forall x,y\in X,\ \forall g\in G. 
\end{equation}
Quantum groups have no points $g$, but we are still acting on a classical space $X$, so we can still phrase the condition in terms of points $x$ and $y$ of $X$. 

Denote by $d_x\in\cC(X)$ the function defined by $d_x(y) = d(x,y)$. A moment's thought will convince the reader that the analogue of \Cref{eq.classical_isometry} is \Cref{eq.D_isometric} below.

\begin{definition}\label{def.D_isometric}
The action $\rho$ is \define{(D)-isometric} or \define{satisfies condition (D)} if 
\begin{equation}\label{eq.D_isometric}
 \rho(d_y)(x) = \kappa(\rho(d_x)(y)),\ \forall x,y\in X, 
\end{equation}
where in general, for $f\in \cC(X)$, $\rho(f)(x)$ is the evaluation at $x$ on the left hand tensorand of $\rho(f)\in\cC(X)\otimes A$ (and analogously for $f\in\cW(X)$). 
\end{definition}

This is \cite[Definition 3.1]{Gos12}, where (D)-isometries are simply called isometries. Goswami shows in \cite[3.5]{Gos12} that under certain conditions on $\rho$, this notion of isometry can be recast in terms of a ``diagonal action'' by $A$ on $X\times X$. We will not need this interpretation here though. 

The other notion of quantum isometry that we review here is that of \cite{QuaSab12}. It is based on the observation that the distance $d$, which induces the Lipschitz seminorm
\[
 L(f) = \sup_{x\ne y}\frac{|f(x)-f(y)|}{d(x,y)}
\] 
on $\cC(X)$, can in turn be recovered from $L$ by
\begin{equation}\label{eq.d_from_L}
 d(x,y) = \sup_{L(f)\le 1}(f(x)-f(y)).  
\end{equation}
Hence, compatibility of $\rho$ with $d$ should be amenable to phrasing in terms of compatibility with $L$ (which we henceforth also refer to as the Lip-norm). This is done as follows.

\begin{definition}\label{def.Lip_isometric}
The action $\rho$ is \define{(Lip)-isometric} or \define{satisfies condition (Lip)} if
\begin{equation}\label{eq.Lip_isometric}
L(\psi\triangleright f)\le L(f),\ \forall \psi\in S(A). 
\end{equation}
for all $f\in\cC(X)$. 
\end{definition}

(Lip)-isometric actions are called isometric in \cite[Definition 3.1]{QuaSab12}. Note that (Lip) is indeed equivalent to $\rho$ being isometric in the usual sense when it is an action of an ordinary compact group \cite[3.4]{QuaSab12}. 

We can give an alternative formulation of condition (Lip). To this end, first regard each $x\in X$ as a probability measure concentrated at a single point. This embeds $X$ in $\pr(X)$ continuously with respect to the given, compact topology on $X$ and the weak$^*$ topology of $\pr(X)$ induced by duality with $\cC(X)$. The distance $d$ can be extended to $\pr(X)$ in the following way:
\begin{equation}\label{eq.KanRub}
 d(\mu,\nu) = \sup_{L(f)\le 1}(\mu(f)-\nu(f)),\ \forall\mu,\nu\in\pr(X)
\end{equation}
(where $\mu(f)=\int_Xf\ \text{d}\mu$); cf. \Cref{eq.d_from_L}. This is the so-called Kantorovi\u c distance on $\pr(X)$ \cite{Kan42,KanRub57}. 

Recall from \Cref{subse.prel_sp} that $\rho$ induces a right action of the semigroup $S(A)$ on $\pr(X)$, denoted by $(\mu,\psi)\mapsto\mu\triangleleft\psi$, $\mu\in\pr(X)$, $\psi\in S(A)$. The following definition anticipates \Cref{subse.prel_Was} below.

\begin{definition}\label{def.Lip1_isometric}
The action $\rho$ is \define{(Lip$_1$)-isometric} or \define{satisfies condition (Lip$_1$)} if
\begin{equation}\label{eq.Lip1_isometric}
d(\mu\triangleleft\psi,\nu\triangleleft\psi)\le d(\mu,\nu),\ \forall \mu,\nu\in\pr(X),\ \forall \psi\in S(A) 
\end{equation}
for all $f\in\cC(X)$.
\end{definition}

In other words, the action of $S(A)$ on $\pr(X)$ is by contractions with respect to $d$. We now have

\begin{proposition}\label{pr.Lip=Lip1}
The following conditions on the action $\rho$ are equivalent:
\begin{enumerate}
\renewcommand{\labelenumi}{(\arabic{enumi})}
 \item (Lip);
 \item (Lip$_1$);
 \item For all $x,y\in X$ and all $\psi\in S(A)$ we have $d(x\triangleleft \psi,y\triangleleft\psi)\le d(x,y)$.
\end{enumerate}
\end{proposition}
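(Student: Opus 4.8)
The plan is to prove the cycle of implications $(1)\Rightarrow(2)\Rightarrow(3)\Rightarrow(1)$, of which the middle one is essentially trivial. First, $(2)\Rightarrow(3)$ is immediate: condition (3) is just the restriction of the inequality in \Cref{eq.Lip1_isometric} to the Dirac measures $\mu=\delta_x$, $\nu=\delta_y$, noting that the embedding $X\hookrightarrow\pr(X)$ sends points to such measures and is equivariant for $\triangleleft$, and that the Kantorovi\v{c} distance \Cref{eq.KanRub} restricted to Dirac measures recovers $d$ because of \Cref{eq.d_from_L}.

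For $(1)\Rightarrow(2)$ I would argue directly from the definitions using the duality between $\triangleright$ and $\triangleleft$. Fix $\psi\in S(A)$ and $\mu,\nu\in\pr(X)$. For any $f\in\cC(X)$ with $L(f)\le 1$ we have
\[
 (\mu\triangleleft\psi)(f)-(\nu\triangleleft\psi)(f) = \mu(\psi\triangleright f)-\nu(\psi\triangleright f),
\]
by the definitions of $\triangleleft$ and $\triangleright$ (both are computed from $\rho$ followed by pairing with $\mu\otimes\psi$, resp. applying $\id\otimes\psi$ and then $\mu$). By (Lip) we have $L(\psi\triangleright f)\le L(f)\le 1$, so the right-hand side is at most $\sup_{L(g)\le 1}(\mu(g)-\nu(g)) = d(\mu,\nu)$. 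Taking the supremum over all such $f$ gives $d(\mu\triangleleft\psi,\nu\triangleleft\psi)\le d(\mu,\nu)$, which is \Cref{eq.Lip1_isometric}. In fact the same computation run in reverse, taking $\mu=\delta_x$, $\nu=\delta_y$, shows $(3)\Rightarrow(1)$ as well, since $L(\psi\triangleright f)=\sup_{x\ne y}\frac{|(\psi\triangleright f)(x)-(\psi\triangleright f)(y)|}{d(x,y)}$ and, for each pair $x\ne y$, $(\psi\triangleright f)(x)-(\psi\triangleright f)(y)=\delta_x(\psi\triangleright f)-\delta_y(\psi\triangleright f)=(\delta_x\triangleleft\psi)(f)-(\delta_y\triangleleft\psi)(f)\le d(x\triangleleft\psi,y\triangleleft\psi)L(f)\le d(x,y)L(f)$ by (3); replacing $f$ by $-f$ handles the absolute value.

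So the logical skeleton is $(1)\Leftrightarrow(2)$ via the $\triangleright$--$\triangleleft$ adjunction and the Kantorovi\v{c} formula, with (3) sitting in between as the restriction to points; I would phrase it as $(2)\Rightarrow(3)\Rightarrow(1)\Rightarrow(2)$ to avoid redundancy. The only place requiring a little care — the "main obstacle," though it is minor — is the bookkeeping identity $(\mu\triangleleft\psi)(f)=\mu(\psi\triangleright f)$, which should be checked cleanly from the two defining diagrams in \Cref{subse.prel_sp}: both sides equal $(\mu\otimes\psi)(\rho(f))$, the first by definition of $\triangleleft$ and the second because $\mu(\psi\triangleright f)=\mu((\id\otimes\psi)\rho(f))=(\mu\otimes\psi)(\rho(f))$. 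Once this is in hand the rest is a direct unwinding of \Cref{eq.KanRub} and the definition of $L$, and requires nothing beyond the material already assembled in \Cref{subse.prel_sp,subse.prel_metric}.
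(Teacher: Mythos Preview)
Your proof is correct and follows essentially the same approach as the paper: the cycle $(1)\Rightarrow(2)\Rightarrow(3)\Rightarrow(1)$, with each implication proved via the adjunction identity $(\mu\triangleleft\psi)(f)=\mu(\psi\triangleright f)$ combined with the Kantorovi\v{c} formula \Cref{eq.KanRub}. Your treatment is in fact slightly more explicit than the paper's in verifying that identity from the defining diagrams, but otherwise the arguments coincide.
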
 
\begin{proof} 
(1) $\Rightarrow$ (2): Let $\mu,\nu$ be probability measures on $X$, and $\psi\in S(A)$ a state on the underlying algebra of the quantum group.

By definition, $d(\mu\triangleleft\psi,\nu\triangleleft\psi)$ is the supremum over $f$ with $L(f)\le 1$ of 
\[
 (\mu\triangleleft\psi)(f)-(\nu\triangleleft\psi)(f) = \mu(\psi\triangleright f)-\nu(\psi\triangleright f). 
\]
(Lip) implies that $\mu\triangleright f$ and $\nu\triangleright(f)$ both have Lip-norm $\le 1$, which means that the quantity cannot be larger than $d(\mu,\nu)$.   

(2) $\Rightarrow$ (3) is obvious. 

(3) $\Rightarrow$ (1): In general, we have 
\[
 (\psi\triangleright f)(x)-(\psi\triangleright f)(y) = f(x\triangleleft\psi) - f(y\triangleleft\psi). 
\]
If $L(f)\le 1$, then this difference is at most $d(x\triangleleft\psi,y\triangleleft\psi)$. In turn, if $\rho$ is (Lip$_1$), then this distance is at most $d(x,y)$. 
\end{proof}

\subsection{Wasserstein distances}\label{subse.prel_Was}

The notation (Lip$_1$) from the previous subsection is meant to suggest that that condition is an ``$L^1$ version'' of something that more generally can come in an $L^p$ flavor for any $p\ge 1$. We unpack this here. 

As before, $X$ is a compact Hausdorff topological space. We denote by $p_i$, $i=1,2$ the projections $X\times X\to X$ on the first and second factor respectively.

\begin{definition}
Let $\mu,\nu\in\pr(X)$ be two probability measures. A probability measure $\pi\in\pr(X)$ is a \define{$(\mu,\nu)$-coupling} if $\pi_{1*}(\pi)=\mu$ and $\pi_{2*}(\pi)=\nu$. 

We denote the set of $(\mu,\nu)$-couplings by $\Pi(\mu,\nu)$. 
\end{definition}
We also say that $\pi\in\Pi(\mu,\nu)$ has \define{marginals} $\mu$ and $\nu$. 

Recall the following definition from the Introduction:

\begin{definition}\label{def.Wp}
Let $\mu,\nu$ be probability measures on the compact metric space $(X,d)$ and $p\ge 1$. The \define{Wasserstein p-distance} is
\[
 W_p(\mu,\nu) = \inf_{\pi\in\Pi(\mu,\nu)}\left(\int_{X\times X} d(x,y)^p\ \text{d}\pi\right)^{\frac 1p}. 
\]  
\end{definition}

The term `Wasserstein (or Vasershtein) distance' originates in \cite{Dob70}, based on the introduction of $W_1$ in \cite{Vas69}. It can be shown that $W_p$ is indeed a distance (i.e. it satisfies the triangle inequality and only vanishes when $\mu=\nu$), and that moreover it induces on $\pr(X)$ the usual weak$^*$ topology. We refer the reader to \cite[Chapter 6]{Vil09} for further details. 

The connection to the previous discussion is by means of $W_1$. It is a result due to Kantorovich and Rubisntein \cite{KanRub58} that it coincides with the distance $d$ on $\pr(X)$ defined by \Cref{eq.KanRub}. 

\begin{definition}\label{def.Lipp_isometric}
Let $p\ge 1$. An action of a quantum group on the compact metric space $X$ is \define{(Lip$_p$)-isometric} or \define{satisfies condition (Lip$_p$)} if
\begin{equation}\label{eq.Lipp_isometric}
W_p(\mu\triangleleft\psi,\nu\triangleleft\psi)\le W_p(\mu,\nu),\ \forall \mu,\nu\in\pr(X),\ \forall \psi\in S(A) 
\end{equation}
for all $f\in\cC(X)$.
\end{definition}

\begin{remark}
It might seem strange that an isometry requirement is phrased as an inequality, but observe that when the quantum group is simply the algebra of continuous functions on a compact group, the definition is equivalent to the usual notion of isometry:

Compact groups have points $g$, which can be regarded as measures. Condition \Cref{eq.Lipp_isometric} demands that both $\triangleleft g$ and its inverse $\triangleleft g^{-1}$ contract Wasserstein distances, which means that both preserve $W_p$. 
\end{remark}

We now have the following partial analogue of \Cref{pr.Lip=Lip1}.

\begin{proposition}\label{pr.Lipp_alt}
For any $p\ge 1$, condition (Lip$_p$) is equivalent to the requirement that for all $x,y\in X$ and all $\psi\in S(A)$ we have $W_p(x\triangleleft \psi,y\triangleleft\psi)\le d(x,y)$.
\end{proposition}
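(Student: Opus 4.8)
The plan is to run the proof of Proposition~\ref{pr.Lip=Lip1} one level up, replacing the Kantorovich--Rubinstein duality for $W_1$ by the full Kantorovich duality for the transportation cost $c(x,y)=d(x,y)^p$. First, since $t\mapsto t^{1/p}$ is increasing, the inequality $W_p(\mu\triangleleft\psi,\nu\triangleleft\psi)\le W_p(\mu,\nu)$ is equivalent to the corresponding inequality between $p$-th powers, so it suffices to work with $W_p(\cdot,\cdot)^p=\inf_{\pi}\int d^p\ \text{d}\pi$. The forward implication is then immediate: applying (Lip$_p$) to the Dirac measures $\delta_x,\delta_y$ and using that $\delta_x\triangleleft\psi=x\triangleleft\psi$ and that the only coupling of $\delta_x$ and $\delta_y$ is supported at $(x,y)$, so $W_p(\delta_x,\delta_y)=d(x,y)$, gives exactly $W_p(x\triangleleft\psi,y\triangleleft\psi)\le d(x,y)$. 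Throughout I will use the elementary identities $(\mu\triangleleft\psi)(f)=\mu(\psi\triangleright f)$ and its special case $(\delta_x\triangleleft\psi)(f)=(\psi\triangleright f)(x)$, which both follow because all expressions in sight equal $(\mu\otimes\psi)(\rho(f))$ (resp.\ $(\mathrm{ev}_x\otimes\psi)(\rho(f))$).

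For the converse, recall Kantorovich duality (e.g.\ \cite[Theorem~5.10]{Vil09}): for a compact metric space and the continuous cost $c=d^p$,
\[
 W_p(\alpha,\beta)^p=\sup\Big\{\alpha(\phi)+\beta(\phi')\ \Big|\ \phi,\phi'\in\cC(X),\ \phi(x)+\phi'(y)\le d(x,y)^p\ \forall x,y\in X\Big\}.
\]
The key observation is that the hypothesis says precisely that the self-map $\psi\triangleright$ of $\cC(X)$ sends admissible dual pairs for this cost to admissible dual pairs: if $\phi(x)+\phi'(y)\le d(x,y)^p$ for all $x,y$, then
\[
 (\psi\triangleright\phi)(x)+(\psi\triangleright\phi')(y)=(\delta_x\triangleleft\psi)(\phi)+(\delta_y\triangleleft\psi)(\phi')\le W_p(\delta_x\triangleleft\psi,\delta_y\triangleleft\psi)^p\le d(x,y)^p,
\]
where the first inequality is the trivial "weak duality" half of the displayed formula applied to the measures $\delta_x\triangleleft\psi$ and $\delta_y\triangleleft\psi$, and the second is the assumed contraction on points. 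Note $\psi\triangleright\phi,\psi\triangleright\phi'$ again lie in $\cC(X)$, so this is a genuine admissible pair.

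It remains to chain these facts. Fix $\mu,\nu\in\pr(X)$ and $\psi\in S(A)$. For any admissible pair $(\phi,\phi')$ we get, using the identity of the first paragraph and then weak duality for $W_p(\mu,\nu)^p$ applied to the admissible pair $(\psi\triangleright\phi,\psi\triangleright\phi')$ supplied by the second paragraph,
\[
 (\mu\triangleleft\psi)(\phi)+(\nu\triangleleft\psi)(\phi')=\mu(\psi\triangleright\phi)+\nu(\psi\triangleright\phi')\le W_p(\mu,\nu)^p.
\]
Taking the supremum over all admissible $(\phi,\phi')$ and invoking Kantorovich duality once more --- now the nontrivial "primal $\le$ dual" inequality, for $W_p(\mu\triangleleft\psi,\nu\triangleleft\psi)^p$ --- yields $W_p(\mu\triangleleft\psi,\nu\triangleleft\psi)^p\le W_p(\mu,\nu)^p$, i.e.\ (Lip$_p$). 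The step I would be most careful about is citing the right version of Kantorovich duality: one needs that the dual supremum may be taken over \emph{continuous} $\phi,\phi'$ (so that $\psi\triangleright\phi$ stays in $\cC(X)$), which is fine because $d^p$ is continuous on the compact space $X\times X$; alternatively one can run the identical argument with bounded Borel test functions and the extension of $\triangleright$ to $\cW(X)$. A more hands-on route, avoiding duality, would be to glue for each $(x,y)$ a near-optimal coupling of $\delta_x\triangleleft\psi$ and $\delta_y\triangleleft\psi$ over an optimal coupling $\pi\in\Pi(\mu,\nu)$; this works but forces a measurable-selection argument for $(x,y)\mapsto\pi_{x,y}$ that the duality approach sidesteps entirely.
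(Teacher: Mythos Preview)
Your proof is correct and follows essentially the same approach as the paper's: both directions are handled identically, with the nontrivial implication going through Kantorovich duality for the cost $d^p$ and the key step being that $(\phi,\phi')$ admissible implies $(\psi\triangleright\phi,\psi\triangleright\phi')$ admissible via the pointwise hypothesis. Your added remarks about restricting to continuous test functions and the alternative gluing route are sound but not needed for the argument.
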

\begin{proof}
(Lip$_p$) is clearly stronger than the alternative condition in the statement (since the latter is simply (Lip$_p$) applied to measures supported at single points). So we are left with the opposite implication. 

As a consequence of the Kantorovich duality theorem \cite[5.10]{Vil09}, for any pair $\mu,\nu\in\pr(X)$ we have
\[
 W_p^p(\mu,\nu)=\inf_{\pi\in\Pi(\mu,\nu)}\int_{X\times X}d(x,y)^p\ \text{d}\pi = \sup\left(\int_X f\ \text{d}\mu+\int_X g\ \text{d}\nu\right),
\]
where the supremum is taken over all pairs  of continuous functions $f,g\in\cC(X)$ with $f(x)+g(y)\le d(x,y)^p,\ \forall x,y$. Denote this set of pairs $(f,g)$ by $P$. Applying the duality theorem to $\mu\triangleleft\psi$ and $\nu\triangleleft\psi$ for some $\psi\in S(A)$, we find 
\[
 W_p^p(\mu\triangleleft\psi,\nu\triangleleft\psi) = \sup_{(f,g)\in P}\left((\mu\triangleleft\psi)(f)+(\nu\triangleleft\psi)(g)\right).
\]
In turn, this is $\sup(\mu(\psi\triangleright f)+\nu(\psi\triangleright g))$. It will be at most
\[
 \sup_{(f',g')\in P}(\mu(f')+\nu(g')) = W_p^p(\mu,\nu),
\]   
as desired, if we show that $(\psi\triangleright f,\psi\triangleright g)$ is in $P$ whenever $(f,g)$ is. But for $(f,g)\in P$ we have
\begin{multline*}
 (\psi\triangleright f)(x) + (\psi\triangleright g)(y) = (x\triangleleft\psi)(f) + (y\triangleleft\psi)(g) \\ 
\le \sup_{(f',g')\in P}((x\triangleleft\psi)(f') + (y\triangleleft\psi)(g')) = W_p^p(x\triangleleft\psi,y\triangleleft\psi). 
\end{multline*}
Since we are assuming that $\rho$ satisfies the condition from the statement of the proposition, this is at most $W_p^p(x,y)=d(x,y)^p$. 
\end{proof}

This implies that the conditions (Lip$_p$) are totally ordered by strength:

\begin{corollary}\label{cor.Lipp_implications}
For any quantum action on $X$ and any $1\le q\le p$, (Lip$_p$) implies (Lip$_q$). 
\end{corollary}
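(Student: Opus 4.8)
The plan is to deduce \Cref{cor.Lipp_implications} directly from \Cref{pr.Lipp_alt}, which reduces each condition (Lip$_p$) to a statement about Dirac measures, i.e. to the inequality $W_p(x\triangleleft\psi,y\triangleleft\psi)\le d(x,y)$ for all $x,y\in X$ and all $\psi\in S(A)$. So the whole content of the corollary becomes: if $q\le p$ and $W_p(x\triangleleft\psi,y\triangleleft\psi)\le d(x,y)$ holds for all such $x,y,\psi$, then $W_q(x\triangleleft\psi,y\triangleleft\psi)\le d(x,y)$ holds too. For this it suffices to show that $W_q\le W_p$ on $\pr(X)$ whenever $1\le q\le p$, since then $W_q(x\triangleleft\psi,y\triangleleft\psi)\le W_p(x\triangleleft\psi,y\triangleleft\psi)\le d(x,y)$.

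The key step is therefore the purely measure-theoretic monotonicity $W_q(\mu,\nu)\le W_p(\mu,\nu)$ for $1\le q\le p$. First I would fix an arbitrary coupling $\pi\in\Pi(\mu,\nu)$ and compare the two integrals $\left(\int d(x,y)^q\,\mathrm{d}\pi\right)^{1/q}$ and $\left(\int d(x,y)^p\,\mathrm{d}\pi\right)^{1/p}$ against each other. Since $\pi$ is a probability measure, this is exactly the statement that $L^q(\pi)$-norms are monotone in the exponent, which follows from Jensen's inequality (or equivalently Hölder's inequality) applied to the convex function $t\mapsto t^{p/q}$: one gets $\int |g|^q\,\mathrm{d}\pi \le \left(\int |g|^p\,\mathrm{d}\pi\right)^{q/p}$ for $g=d(x,y)$, hence $\left(\int d^q\,\mathrm{d}\pi\right)^{1/q}\le\left(\int d^p\,\mathrm{d}\pi\right)^{1/p}$. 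Taking the infimum over $\pi\in\Pi(\mu,\nu)$ on both sides yields $W_q(\mu,\nu)\le W_p(\mu,\nu)$; note the infimum passes through correctly because the pointwise inequality holds coupling by coupling.

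The only mild subtlety — and this is where I would be a little careful rather than truly obstructed — is making sure the reduction via \Cref{pr.Lipp_alt} is applied in the right direction: that proposition characterizes (Lip$_p$) by its restriction to Dirac measures with the bound $d(x,y)$ on the right (not $W_p(x,y)$, though these coincide), and the same for (Lip$_q$); combining the characterization for $p$ with the monotonicity $W_q\le W_p$ gives the Dirac-measure form of (Lip$_q$), and then \Cref{pr.Lipp_alt} run in reverse upgrades it to the full condition (Lip$_q$). Everything else is routine. In summary: apply \Cref{pr.Lipp_alt} to pass to Dirac measures, invoke $W_q\le W_p$ (Jensen on the probability measure $\pi$), then apply \Cref{pr.Lipp_alt} again to conclude.
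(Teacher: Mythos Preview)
Your proposal is correct and follows exactly the same route as the paper: reduce to Dirac measures via \Cref{pr.Lipp_alt}, chain the inequalities $W_q(x\triangleleft\psi,y\triangleleft\psi)\le W_p(x\triangleleft\psi,y\triangleleft\psi)\le d(x,y)$, and invoke \Cref{pr.Lipp_alt} again. The paper leaves the inequality $W_q\le W_p$ as an exercise (attributing it to H\"older), whereas you spell out the Jensen/H\"older argument on a fixed coupling $\pi$ before passing to the infimum; otherwise the arguments are identical.
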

\begin{proof}
Suppose the action is (Lip$_p$)-isometric. Then, for $x,y\in X$ and $\psi\in S(A)$ we have
\[
 W_q(x\triangleleft\psi,y,\triangleleft\psi)\le W_p(x\triangleleft\psi,y\triangleleft\psi)\le d(x,y),
\]
where the first inequality is simply $W_q\le W_p$ (a consequence of H\"older's inequality which we leave to the reader) and the second one is (Lip$_p$). The conclusion follows from \Cref{pr.Lipp_alt}. 
\end{proof}

\section{A continuous Hall theorem}\label{se.Hall}

The main result of this section will be used in the course of the proof of \Cref{th.main}, but it might be of some independent interest. Before stating it, we need some preparations. 

As before, $X$ is a metrizable compact Hausdorff topological space and the map $p_1$ (respectively $p_2$) from $X\times X$ to $ X$ is the projection on the left (resp. right) hand factor. For subsets $Y\subseteq X\times X$ and $S\subseteq X$ we write 
\[
 p_{12}^Y(S) = p_2\left(p_1^{-1}(S)\cap Y\right)
\]  
and 
\[
 p_{21}^Y(S) = p_1\left(p_2^{-1}(S)\cap Y\right)
\]
respectively. In other words, $p_{12}^Y(S)$ (resp. $p_{21}^Y(S)$) is the set of points $x'\in X$ such that $(x,x')\in Y$ (resp. $(x',x)\in Y$) for some $x\in S$. 

All is in place now for the statement.

\begin{theorem}\label{th.Hall}
Let $X$ be a compact Hausdorff topological space, $Y\subseteq X\times X$ a closed subset, and $\mu,\nu\in\pr(X)$ probability measures. 

Then, there is a coupling $\pi\in\Pi(\mu,\nu)$ supported on $Y$ if and only if
\begin{equation}\label{eq.Hall}
 \nu\left(p_{12}^Y(S)\right)\ge\mu(S)\ \text{for every closed subset}\ S\subseteq X. 
\end{equation}
\end{theorem}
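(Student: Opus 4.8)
The ``only if'' direction is essentially bookkeeping: if $\pi\in\Pi(\mu,\nu)$ is supported on $Y$, then for a closed set $S\subseteq X$ the set $p_1^{-1}(S)\cap Y$ is compact, hence so is its $p_2$-image $p_{12}^Y(S)$ (in particular it is Borel), and $p_1^{-1}(S)\cap Y\subseteq p_2^{-1}(p_{12}^Y(S))$. Since $\pi$ gives full mass to $Y$,
\[
 \mu(S)=\pi(p_1^{-1}(S))=\pi(p_1^{-1}(S)\cap Y)\le\pi\bigl(p_2^{-1}(p_{12}^Y(S))\bigr)=\nu\bigl(p_{12}^Y(S)\bigr),
\]
which is \Cref{eq.Hall}. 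So all the work is in the converse.

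For the converse I would argue by convex duality. A coupling of $\mu$ and $\nu$ supported on the closed set $Y$ is the same thing as a probability measure $\pi$ on the compact space $Y$ whose pushforwards under the two coordinate projections $p_i\colon Y\to X$ are $\mu$ and $\nu$. The map $\pi\mapsto\bigl((p_1)_*\pi,(p_2)_*\pi\bigr)$ is affine and weak$^*$-continuous from $\pr(Y)$ into $\cM(X)\times\cM(X)$, so its image $K$ is a weak$^*$-compact convex set (using that $\pr(Y)$ is weak$^*$-compact). If $(\mu,\nu)\notin K$, then Hahn--Banach separation in the locally convex space $\cM(X)\times\cM(X)=\cC(X)^*\times\cC(X)^*$ with the weak$^*$ topology — whose continuous dual is $\cC(X)\times\cC(X)$ — yields $f,g\in\cC(X)$ with
\[
 \int_X f\,\mathrm d\mu+\int_X g\,\mathrm d\nu\;>\;\sup_{\pi\in\pr(Y)}\int_Y\bigl(f\circ p_1+g\circ p_2\bigr)\,\mathrm d\pi\;=\;\max_{(x,y)\in Y}\bigl(f(x)+g(y)\bigr)=:M,
\]
the last equality because the supremum over probability measures on the compact set $Y$ is attained at a point mass. (If $Y=\emptyset$ then \Cref{eq.Hall} already fails for $S=X$, so there is nothing to prove, and otherwise $\pr(Y)\neq\emptyset$.) Thus it suffices to show that \Cref{eq.Hall} forbids such $f,g$, i.e.\ that it forces the Kantorovich-type inequality $\int_X f\,\mathrm d\mu+\int_X g\,\mathrm d\nu\le M$ for all $f,g\in\cC(X)$.

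This implication is the heart of the matter, and the trick is to slice $f$ along its level sets. Put $h:=M-g\in\cC(X)$; since $\mu,\nu$ are probability measures, the target inequality is equivalent to $\int_X f\,\mathrm d\mu\le\int_X h\,\mathrm d\nu$. For every $t\in\bR$ the superlevel set $S_t:=\{f\ge t\}$ is closed, and I claim $p_{12}^Y(S_t)\subseteq\{h\ge t\}$: if $y$ arises as $p_2(x,y)$ with $(x,y)\in Y$ and $f(x)\ge t$, then $g(y)\le M-f(x)\le M-t$, so $h(y)=M-g(y)\ge t$. Feeding this inclusion into \Cref{eq.Hall} and using monotonicity of $\nu$ gives $\mu(\{f\ge t\})\le\nu(p_{12}^Y(S_t))\le\nu(\{h\ge t\})$ for every $t\in\bR$; consequently also $\mu(\{f<t\})=1-\mu(\{f\ge t\})\ge 1-\nu(\{h\ge t\})=\nu(\{h<t\})$. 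Integrating these two chains of inequalities against the ``layer cake'' identity $\int\varphi\,\mathrm d\lambda=\int_0^\infty\lambda(\varphi\ge t)\,\mathrm dt-\int_{-\infty}^0\lambda(\varphi<t)\,\mathrm dt$ (valid for a bounded Borel function on a probability space) yields $\int_X f\,\mathrm d\mu\le\int_X h\,\mathrm d\nu$, as needed.

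The only genuinely delicate step is the passage from the pointwise, closed-set hypothesis \Cref{eq.Hall} to the integrated inequality $\int f\,\mathrm d\mu+\int g\,\mathrm d\nu\le\max_Y(f+g)$; once one thinks to apply \Cref{eq.Hall} separately to each superlevel set of $f$ (with the companion function $h=M-g$), it falls out. Everything else — weak$^*$ compactness of $\pr(Y)$, the Hahn--Banach separation, the identification of the relevant dual, and the layer-cake formula — is standard. A more combinatorial alternative would be to approximate $X$ by a fine finite Borel partition, read off a weighted finite Hall condition for the associated bipartite incidence graph, invoke the ordinary marriage theorem, and take a weak$^*$ limit of the resulting near-couplings over a neighborhood basis of $Y$; but reconciling the continuous and discretized Hall conditions requires an upper-semicontinuity argument for $S\mapsto p_{12}^Y(S)$ that makes this route heavier than the duality proof above.
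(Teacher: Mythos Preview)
Your proof is correct and follows the same two-stage architecture as the paper's: a Hahn--Banach reduction to the inequality ``$f(x)+g(y)\le M$ on $Y$ $\Rightarrow$ $\int f\,\mathrm d\mu+\int g\,\mathrm d\nu\le M$'', followed by a level-set slicing argument exploiting \Cref{eq.Hall}. The implementations differ in both stages, though. For the duality step, the paper defines $\pi$ on the range of $T(f,g)=f\circ p_1+g\circ p_2$ and extends by the order-unit Hahn--Banach theorem (positive extension), whereas you argue by contraposition via weak$^*$ separation of $(\mu,\nu)$ from the compact convex image of $\pr(Y)$; these are dual phrasings of the same idea. For the key inequality, the paper approximates $f$ by a lower semicontinuous step function, partitions $X$ into the resulting (locally closed) level sets $A_i$, and applies \Cref{eq.Hall} piecewise --- which forces an extension of \Cref{eq.Hall} to locally closed $S$ and an appeal to measurability of analytic sets, both using metrizability. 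Your layer-cake argument applies \Cref{eq.Hall} only to the genuinely closed superlevel sets $\{f\ge t\}$ and integrates in $t$, which is cleaner and sidesteps the metrizability hypothesis entirely. One cosmetic point: your separation should of course produce real-valued $f,g$ (real linear functionals on the weak$^*$ space), which you are tacitly assuming.
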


Before going into the proof, a word on the title of this section. It refers to Hall's theorem, a combinatorial result on matchings in bipartite graphs. 

Recall that a graph is \define{bipartite} if its vertex set can be partitioned in two classes $A$ and $B$ such that every edge connects a vertex in $A$ and one in $B$. A \define{perfect matching} is a set of edges such that every vertex is on precisely one. In general, for a vertex set $S$, denote by $N(S)$ the set of neighbors of $S$, i.e. vertices connected to some element of $S$. 

The result (also known as Hall's marriage theorem because it pairs off two classes of vertices) reads

\begin{corollary}\label{cor.classical_Hall}
A graph with bipartition into sets $A,B$ of equal size has a perfect matching if and only if 
\begin{equation}\label{eq.classical_Hall}
 |N(S)| \ge |S|,\ \forall S\subseteq A.  
\end{equation}
\end{corollary}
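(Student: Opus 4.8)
The plan is to deduce the finite statement from the measure-theoretic \Cref{th.Hall} by specializing to a finite discrete space carrying uniform mass. The ``only if'' direction requires nothing: a perfect matching is the graph of a bijection $f\colon A\to B$ with $af(a)\in E$ for every $a$, so for $S\subseteq A$ we get $f(S)\subseteq N(S)$ with $|f(S)|=|S|$, which is \eqref{eq.classical_Hall}.

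For ``if'', fix bijections $\alpha\colon\{1,\dots,n\}\to A$ and $\beta\colon\{1,\dots,n\}\to B$, where $n=|A|=|B|$. Take $X=\{1,\dots,n\}$ with its (compact, discrete) topology, let $Y=\{(i,j)\ |\ \alpha(i)\beta(j)\in E\}\subseteq X\times X$ (closed, since $X$ is finite), and put $\mu=\nu=\tfrac1n\sum_{i=1}^n\delta_i$. With these choices one checks directly that $p_{12}^Y(S)=\beta^{-1}(N(\alpha(S)))$ for every $S\subseteq X$, so the inequality $\nu(p_{12}^Y(S))\ge\mu(S)$ in \eqref{eq.Hall} reads $|N(\alpha(S))|\ge|\alpha(S)|$; as $S$ ranges over all (automatically closed) subsets of $X$ and $\alpha(S)$ over all subsets of $A$, this is exactly the hypothesis \eqref{eq.classical_Hall}. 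Hence \Cref{th.Hall} produces a coupling $\pi\in\Pi(\mu,\nu)$ supported on $Y$.

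It remains to convert this a priori ``fractional'' coupling into an honest matching. The matrix $M_{ij}=n\,\pi(\{(i,j)\})$ is doubly stochastic and vanishes off $Y$, and I would then invoke the Birkhoff--von Neumann theorem — equivalently, the classical fact that a doubly stochastic matrix has positive permanent — to obtain a permutation $\sigma$ with $M_{i\sigma(i)}>0$ for all $i$, whence $(i,\sigma(i))\in Y$ and $\{\alpha(i)\beta(\sigma(i))\}_{i=1}^n$ is the desired perfect matching. This last extraction step is the one delicate point, for a mildly amusing reason: the usual textbook proof of Birkhoff--von Neumann itself passes through Hall's theorem, so to avoid circularity one should instead use a Hall-free argument — for instance the elementary cycle-cancellation argument, which locates a cycle of strictly positive entries of $M$ and pushes mass along it to kill an entry, inductively writing $M$ as a convex combination of permutation matrices each supported within the support of $M$. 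That argument uses nothing from the present paper, so there is no genuine circularity; it is merely worth being explicit about.
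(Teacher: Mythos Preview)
Your proof is correct and follows the same route as the paper: identify $A$ and $B$ with a common $n$-point space $X$, take $\mu=\nu$ to be normalized counting measure, observe that \eqref{eq.Hall} becomes \eqref{eq.classical_Hall} divided by $n$, and invoke \Cref{th.Hall}. The paper stops there, simply asserting that ``the conclusion follows''; you go further and actually extract a permutation from the doubly stochastic coupling, which is a step the paper's proof silently elides. Your observation about the potential circularity of quoting Birkhoff--von Neumann, and your suggestion to use the cycle-cancellation argument instead, is a nice piece of hygiene that the paper does not address at all.
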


See e.g. \cite[2.1.2]{Die00}, where the theorem is stated in somewhat more generality (and perfect matchings are called 1-factors). We do not need it in the sequel, but present a short proof of it as a consequence of \Cref{th.Hall} to illustrate the analogy.

\begin{proof}
Since $A$ and $B$ have the same size ($n$, say), we can identify them to some common $n$-element set $X$. Denote by $Y\subseteq A\times B\cong X\times X$ the set of edges of the graph. 

Taking $\mu=\nu$ to be the normalized counting measure on $X$, the conclusion follows from \Cref{th.Hall}: Condition \Cref{eq.Hall} in this particular case is exactly \Cref{eq.classical_Hall} divided by $n$.   
\end{proof}

As mentioned in the Introduction, the equivalence of (D) and (Lip) when $X$ is finite is proven in \cite[Theorem 3.5]{QuaSab12}. The implication (D) $\Rightarrow$ (Lip) follows a path we replicate here, and uses a matching-type result that is a a kind of hybrid between \Cref{th.Hall} and \Cref{cor.classical_Hall} (\cite[Lemma 4.5]{QuaSab12}): $X$ is discrete but $\mu$ and $\nu$ are not necessarily counting measures. It too is a consequence of \Cref{th.Hall}, but we will not expand on this further.  
 
We make some preparations in view of proving \Cref{th.Hall}. Recall that an \define{ordered vector space} $(V,\le)$ is a real vector space $V$ equipped with a partial order $\le$ compatible with the vector space structure in the sense that sums of non-negative elements are non-negative and $rv\ge 0$ for all $v\ge 0$ and $r\in\bR_{\ge 0}$.

\begin{definition}\label{def.ous}
An \define{order unit} for an ordered (real) vector space $(V,\le)$ is an element $0\le e\in V$ such that for every $v\in V$ there is some $r\ge 0$ such that $re\ge v$.

An \define{order unit space} $(V,\le,e)$ is an ordered vector space $(V,\le)$ equipped with an order unit $e$.  
\end{definition}
See e.g \cite[Definition 2.4]{PauTom09}. The term `order unit space' is not used in that paper, but it is prevalent in the literature. 

The only examples of order unit spaces that will come up below are of the form $\cC_\bR(Z)$, the space of all real-valued continuous functions on a compact Hausdorff space $Z$. The ordering is the usual one, and the order unit is the constant function $1$.

\begin{proof of Hall}
($\Rightarrow$) Suppose there is some $(\mu,\nu)$-coupling $\pi$ supported on $Y$, and let $S\subseteq A$ be some closed subset. Then, we have
\begin{multline*}
 \nu\left(p_{12}^Y(S)\right) = \nu\left(p_2\left(p_1^{-1}(S)\cap Y\right)\right) \\
 = \pi\left( p_2^{-1}\left(p_2\left(p_1^{-1}(S)\cap Y\right)\right)\right)\ge \pi\left( p_1^{-1}(S)\cap Y\right)\\
 =\pi\left( p_1^{-1}(S)\right) = \mu(S), 
\end{multline*}
where: 

The second equality follows from $\nu=p_{2*}(\pi)$; the inequality is just the inclusion of any set $\bullet\subseteq X\times X$ in $\left(p_2^{-1}\circ p_2\right)(\bullet)$; the third equality is the assumption that $\pi$ is supported on $Y$, and the last equality is just $\mu=p_{1*}(\pi)$. 

($\Leftarrow$) Consider the vector spaces $V=\cC_\bR(X)^{\oplus 2}$ (real-valued continuous functions on two copies of $X$) and $W=\cC_\bR(Y)$. We regard both as order unit spaces, as indicated in the preceding discussion.  

We have a linear map $T:V\to W$ defined by
\[
 T(f,g)(x,x') = f(x) + g(x'),\ \forall (x,x')\in Y. 
\] 
The restriction of the dual map $T^*$ restricts to $(p_{1*},p_{2*}):\pr(Y)\to\pr(X)^{\oplus 2}$, and we need to show that $(\mu,\nu)$ is in the image of this restriction. 

We attempt to construct $\pi\in\Pi(\mu,\nu)$ as follows. We need to have $\pi(T(f,g)) = \mu(f) + \nu(g)$. Suppose (a) this is well defined, and (b) this value is non-negative whenever $T(f,g)\ge 0$. 

Then, $\pi$ is a positive functional on the ordered subspace $T(V)\le W$, and this subspace contains the order unit of $W$ (since $T(1,0)=1$). By a standard Hahn-Banach-type result for order unit spaces (\cite[2.15]{PauTom09}), $\pi$ can be extended to a positive functional on $W$. Since moreover $\pi(1)=1$, $\pi$ must be a probability measure on $Y$. 

We are left having to prove (a) and (b) above. In fact, (b) implies (a): If $\mu(f)+\mu(g)\ge 0$ whenever $T(f,g)\ge 0$, then, reversing the signs of $f$ and $g$, $\mu(f)+\mu(g)\le 0$ if $T(f,g)\le 0$. Both inequalities hold when $T(f,g)=0$, hence (a). 
The only thing left to show is that if \Cref{eq.Hall} holds, then 
\[
 T(f,g)\ge 0\Rightarrow \mu(f) + \nu(g)\ge 0. 
\] 
This is taken care of by the next lemma. 
\end{proof of Hall}

\begin{lemma}\label{le.Hall_aux}
Let $\mu,\nu$ be two probability measures on a metrizable compact Hausdorff space $X$, and $Y\subseteq X\times X$ a closed subset. If \Cref{eq.Hall} holds, then for $f,g\in\cC(X)$ we have 
\[
 f(x) + g(x') \ge 0,\ \forall (x,x')\in Y\quad \Rightarrow\quad \mu(f)+\nu(g)\ge 0. 
\]
\end{lemma}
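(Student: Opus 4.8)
The plan is to reduce the continuous statement to a finite one and then invoke the combinatorial Hall theorem (or rather a weighted/measure version of it). Suppose, for contradiction, that $f(x)+g(x')\ge 0$ on $Y$ but $\mu(f)+\nu(g)<0$. Choose a fine finite Borel partition of $X$ into sets $X_i$ of small diameter (possible since $X$ is metrizable and compact), refined enough that $f$ and $g$ each vary by less than a prescribed $\varepsilon>0$ on each $X_i$; pick sample points $x_i\in X_i$. On the finite index set $I=\{i\}$ put the measures $\overline{\mu}(i)=\mu(X_i)$ and $\overline{\nu}(j)=\nu(X_j)$, and let the "edge set" be $\overline{Y}=\{(i,j): (X_i\times X_j)\cap Y\ne\emptyset\}$. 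The inequality $f(x)+g(x')\ge 0$ on $Y$ then forces $f(x_i)+g(x_j)\ge -2\varepsilon$ for every $(i,j)\in\overline{Y}$, up to the oscillation error.

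The key step is verifying that the measure-version Hall condition \Cref{eq.Hall} is (approximately) inherited by the finite model: for $S\subseteq X$ closed, $p_{12}^Y(S)$ is closed (as $Y$ is closed and $X\times X$ compact), and a union of partition blocks $\bigcup_{i\in J}X_i$ has image contained in a controlled neighbourhood of $p_{12}^Y\big(\bigcup_{i\in J}\overline{X_i}\big)$; passing to the finite model, $\overline{\nu}\big(\overline{p_{12}}(J)\big)\ge\overline{\mu}(J)$ holds for all $J\subseteq I$, where $\overline{p_{12}}$ is computed in the graph $\overline{Y}$. By the discrete (measure-weighted) Hall theorem — which is \cite[Lemma 4.5]{QuaSab12}, itself a special case of \Cref{th.Hall} applied to a finite space — there is a coupling $\overline{\pi}$ of $\overline{\mu}$ and $\overline{\nu}$ supported on $\overline{Y}$. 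Then
\[
 \sum_{(i,j)\in\overline{Y}}\overline{\pi}(i,j)\big(f(x_i)+g(x_j)\big)\ge -2\varepsilon,
\]
while the left side equals $\sum_i\overline{\mu}(i)f(x_i)+\sum_j\overline{\nu}(j)g(x_j)$, which is within $O(\varepsilon)$ of $\mu(f)+\nu(g)$ by the choice of partition and sample points. Hence $\mu(f)+\nu(g)\ge -O(\varepsilon)$; letting $\varepsilon\to 0$ gives $\mu(f)+\nu(g)\ge 0$, the desired contradiction.

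An alternative, more self-contained route avoids appealing to the discrete Hall result and instead argues directly by a minimax/duality argument: consider $\inf\{\mu(f)+\nu(g) : f(x)+g(x')\ge 0 \text{ on } Y\}$, note this is $0$ iff the zero functional... but in fact it is cleanest to argue by weak$^*$ compactness — the set of couplings $\Pi(\mu,\nu)$ is weak$^*$ compact and nonempty, and one shows the subset supported on $Y$ is nonempty by a limiting argument from finite approximations as above. Either way, the substance is the passage from continuous to finite data.

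**Main obstacle.** The delicate point is the approximate inheritance of the Hall condition \Cref{eq.Hall} under discretization: a block $\bigcup_{i\in J}X_i$ need not be closed, and $p_{12}^Y$ of a small neighbourhood can be strictly larger than $p_{12}^Y$ of the closed block, so one must be careful that the measure inequality survives with only an $o(1)$ loss. This is handled by exploiting regularity of $\nu$ and the fact that $p_{12}^Y(S)$ depends upper-semicontinuously on $S$ in the Hausdorff metric (again using closedness of $Y$ and compactness), shrinking $\varepsilon$ after the partition is chosen. The remaining estimates — controlling oscillation of $f,g$ and the error in replacing $\mu(f)$ by a Riemann-type sum — are routine uniform-continuity bookkeeping.
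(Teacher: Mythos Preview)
Your approach is correct but genuinely different from the paper's. The paper does \emph{not} discretize and invoke a finite Hall theorem; instead it argues directly: after shifting so that $g\le 0$, it approximates $f$ from above by a lower-semicontinuous step function $\widetilde f$ with level sets $A_i$, sets $B_i=p_{12}^Y(A_i)$, and checks the elementary chain
\[
\mu(\widetilde f)+\nu(g)=\sum_i\int_{A_i}\widetilde f\,d\mu+\nu(g)\ \ge\ \sum_i\Bigl(\int_{A_i}\widetilde f\,d\mu+\int_{B_i}g\,d\nu\Bigr)\ \ge\ 0,
\]
using only $g\le 0$, $\bigcup_iB_i=X$, and $g|_{B_i}\ge -a_i$. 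No combinatorial matching result is used at all; the lemma is thus entirely self-contained, which matters because in the paper it is the \emph{input} to \Cref{th.Hall}, not a consequence. Your route, by contrast, imports the finite weighted Hall theorem from outside (the \cite{QuaSab12} citation is legitimate; invoking \Cref{th.Hall} for the finite case, as your parenthetical suggests, would be circular). What your approach buys is conceptual transparency---one literally sees the coupling---at the cost of an external dependency.

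Two remarks on the execution. First, your ``main obstacle'' evaporates if you define the discrete edge set with closures, $\overline Y=\{(i,j):(\overline{X_i}\times\overline{X_j})\cap Y\ne\emptyset\}$: then for any $J$ the set $S=\bigcup_{i\in J}\overline{X_i}$ is closed, \Cref{eq.Hall} applies to it verbatim, and $p_{12}^Y(S)\subseteq\bigcup_{j\in\overline{p_{12}}(J)}X_j$, so the finite Hall condition holds \emph{exactly}, with no $o(1)$ loss and no need for the semicontinuity argument you sketch. The oscillation bound $f(x_i)+g(x_j)\ge -2\varepsilon$ still holds because closures preserve the diameter bound. Second, be explicit that the discrete result you invoke is independent of the present paper; otherwise the argument reads as circular.
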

\begin{proof}
Adding a constant to $f$ and subtracting it from $g$, we may as well assume $g\le 0$. Note also that the inequality in \Cref{eq.Hall} holds for all \define{locally} closed subsets $S$ of $X$ (i.e. intersections of a closed and an open set). This is because all such sets are $F_\sigma$, i.e. countable unions of closed subsets; this is one place where the condition that $X$ is metrizable comes in.  

Let $\widetilde f$ be a lower semicontinuous step function on $X$ such that $0\le \widetilde f-f\le\varepsilon$ for some small $\varepsilon>0$. Let $a_1,\ldots,a_n$ be the values of $s$. The sets $A_i=\widetilde f^{-1}(a_i)$ are locally closed, and constitute a partition of $X$. We leave it to the reader to show that $B_i=p_{12}^Y(A_i)$ cover $X$ (this is a consequence of \Cref{eq.Hall}). 

Because they are \define{analytic} subsets of $X$, i.e. continuous images of Borel subsets of $X\times X$, $B_i$ are measurable with respect to any Borel measure (e.g. \cite[3.2.4]{Arv76}; this again uses metrizability). Hence, everything below makes sense:
\begin{equation}\label{eq.Hall_aux}
 \mu\left(\widetilde f\right) + \nu(g) = \sum_{i=1}^n\left(\int_{A_i}\widetilde f\ \text{d}\mu\right) +\nu(g) \ge \sum_{i=1}^n\left(\int_{A_i}\widetilde f\ \text{d}\mu+\int_{B_i} g\ \text{d}\nu\right) \ge 0.
\end{equation}
Here, the first inequality follows from $g\le 0$ and $\bigcup_iB_i=X$, while the second one is a consequence of 
\[
 \widetilde f|_{A_i} = a_i \stackrel{\Cref{eq.Hall}}{\Longrightarrow} g|_{B_i}\ge -a_i.
\]
Since the left hand side of \Cref{eq.Hall_aux} is within the arbitrarily small $\varepsilon$ of $\mu(f)+\nu(g)$, this finishes the proof. 
\end{proof}

\section{Main results}\label{se.main}

We are now ready to state the main theorem of the paper. We reprise the notations from previous sections: $(X,d)$ is a compact metric space, $(A,\Delta)$ is a $C^*$ quantum group acting on $X$ via $\rho$. 

As before, we associate to $\rho$ the actions $\triangleleft$ and $\triangleright$ of $S(A)$ on $\cC(X)$ or $\cW(X)$ and on $\pr(X)=S(\cC(X))=S(\cW(X))$ respectively. We leave the symbols `$\triangleleft$' and `$\triangleright$' unadorned, as there will be no danger of confusion: They always refer to the actions induced by $\rho$.

\begin{theorem}\label{th.main}
If the quantum action $\rho$ on $X$ is (D)-isometric, then for every $x,y\in X$ and for every state $\psi\in S(A)$ there is a coupling $\pi\in\Pi(x\triangleleft \psi,y\triangleleft\psi)$ supported on
\begin{equation}\label{eq.main}
 \{(x',y')\in X\times X\ |\ d(x',y') = d(x,y)\}. 
\end{equation}
\end{theorem}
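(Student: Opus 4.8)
The plan is to reduce \Cref{th.main} to an application of the continuous Hall theorem \Cref{th.Hall}, with the closed set $Y$ being precisely the ``distance-preserving locus'' \eqref{eq.main}. Thus fix $x,y\in X$ and $\psi\in S(A)$, set $r=d(x,y)$, and let
\[
 Y = \{(x',y')\in X\times X\ |\ d(x',y')=r\},
\]
which is closed since $d$ is continuous. We must produce a coupling of $\mu:=x\triangleleft\psi$ and $\nu:=y\triangleleft\psi$ supported on $Y$; by \Cref{th.Hall} it suffices to verify the Hall-type inequality $\nu(p_{12}^Y(S))\ge\mu(S)$ for every closed $S\subseteq X$. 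So the real work is to extract this inequality from the (D)-isometry hypothesis \eqref{eq.D_isometric}.

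The key idea for that is to test the measures against well-chosen Lipschitz functions built from the distance, and to exploit the antipode. Concretely, note that $\mu(f)=(\psi\triangleright f)(x)$ and $\nu(f)=(\psi\triangleright f)(y)$ for $f\in\cC(X)$, so the inequality we want is of the form $(\psi\triangleright g)(y)\ge(\psi\triangleright f)(x)$ for suitable $f,g$. Given a closed $S$, the set $p_{12}^Y(S)$ is the set of $y'$ with $d(x',y')=r$ for some $x'\in S$; I would like to compare the ``$S$-indicator'' pushed through $\rho$ at $x$ with the ``$p_{12}^Y(S)$-indicator'' pushed through $\rho$ at $y$. Since indicators are not continuous one works instead with functions like $d_{x'}$ or, more robustly, approximates $S$ by the functions $x'\mapsto \max(0, \varepsilon - d(x',S))$ and similarly for its image, passing to the limit at the end (this is where metrizability and regularity of the measures are used, exactly as in \Cref{le.Hall_aux}). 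The crucial algebraic input is \eqref{eq.D_isometric}: $\rho(d_{y'})(x')=\kappa(\rho(d_{x'})(y'))$, which, combined with the fact that $A$ is of Kac type (so $\kappa$ is an involutive, $*$-preserving, multiplication-reversing isometry) and that $\psi\circ\kappa$ is again a state, lets one transfer estimates about $\psi\triangleright d_{x'}$ evaluated at one point into estimates about $\psi\triangleright d_{y'}$ evaluated at the other. Roughly: if $x'\in S$ then $d_{x'}\ge 0$ and $d_{x'}(x)$ may be large, and one wants to deduce that the ``reach'' of $\psi$ from $y$ lands inside $p_{12}^Y(S)$.

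The main obstacle I anticipate is precisely this translation step: turning the pointwise functional equation \eqref{eq.D_isometric} (valid for the specific functions $d_x$, $d_y$) into a statement about arbitrary closed sets $S$ and their images $p_{12}^Y(S)$. The equation only directly controls how $\rho$ interacts with the distinguished family $\{d_x\}_{x\in X}$, so one has to argue that controlling $\psi\triangleright d_x$ for all $x$ simultaneously already pins down enough about $\mu=x\triangleleft\psi$ and $\nu=y\triangleleft\psi$ to force the Hall inequality. I expect the cleanest route is: (i) show that for every $x'$ the function $\psi\triangleright d_{x'}$ has Lipschitz constant $\le 1$ and that $(\psi\triangleright d_{x'})(x)\le d(x',\,\cdot\,)$ in an averaged sense controlled by $\mu$; (ii) use the antipode symmetry to get the matching upper bound from the $y$ side; (iii) feed the resulting ``local'' distance comparisons into the measure-theoretic argument of \Cref{le.Hall_aux}-type, verifying that if $\mu(S)>0$ then there must be enough $\nu$-mass on points at distance exactly $r$ from $S$. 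Once \eqref{eq.Hall} is established, \Cref{th.Hall} delivers the coupling $\pi$ on $Y$ and the theorem follows; the subsequent deductions (Lip$_p$) and injectivity of $\rho$ in the paper are then formal consequences via \Cref{pr.Lipp_alt} and Kantorovich duality.
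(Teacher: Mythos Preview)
Your reduction to \Cref{th.Hall} is exactly right and matches the paper: fix $x,y$, set $Y=\{(x',y'):d(x',y')=d(x,y)\}$, and verify the Hall inequality $\nu(p_{12}^Y(S))\ge\mu(S)$ for closed $S$. The gap is in your proposed verification of that inequality.

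Your step (i), ``show that $\psi\triangleright d_{x'}$ has Lipschitz constant $\le 1$'', is essentially condition (Lip) applied to the $1$-Lipschitz function $d_{x'}$; since (D) $\Rightarrow$ (Lip) is a \emph{consequence} of the theorem (via \Cref{cor.main}), invoking it here is circular. More importantly, even granting Lipschitz control, it is unclear how an $L^\infty$-type estimate on $\psi\triangleright d_{x'}$ would force the sharp set-inequality $\mu(S)\le\nu(p_{12}^Y(S))$; the condition \eqref{eq.D_isometric} only relates the specific elements $\rho(d_y)(x)$ and $\kappa(\rho(d_x)(y))$ of $A$, and continuous approximations of $\chi_S$ by functions of the form $\max(0,\varepsilon-d(\cdot,S))$ are not in the algebra generated by the $d_x$'s in any usable way.

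What the paper does instead is pass to the $W^*$ setting so that $\chi_S\in\cW(X)$ makes sense, and set $a_{x;S}:=(x\otimes\id)\rho(\chi_S)$, a \emph{projection} in the von Neumann algebra $A$. Condition (D) is then equivalent (\Cref{le.D_in_terms_of_a}) to $a_{x;B(y,r)}=\kappa(a_{y;B(x,r)})$ for all $x,y,r$. The crucial step (\Cref{le.aa=0}) is purely algebraic: if every pair $(s,t)\in S\times T$ has $|d(s,t)-d(x,y)|\ge\delta>0$, then the projections $a_{x;S}$ and $a_{y;T}$ are \emph{orthogonal}, i.e.\ $a_{x;S}a_{y;T}=0$. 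This uses the antipode relation together with the fact that $B(x,r)$ and $B(y,R)$ are disjoint for suitable $r,R$, so their $a_{z;\bullet}$'s multiply to zero. With this in hand, $T:=p_{12}^Y(S)$ has complement $X\setminus T=\bigcup_n U_n$ where each $U_n$ satisfies the hypothesis of \Cref{le.aa=0}, whence $a_{x;S}a_{y;X\setminus T}=0$, i.e.\ $a_{x;S}\le a_{y;T}$; applying $\psi$ gives $\mu(S)=\psi(a_{x;S})\le\psi(a_{y;T})=\nu(T)$.

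So the missing ingredient is not an analytic approximation argument but the \emph{multiplicative} (projection-orthogonality) structure available only after passing to $\cW(X)$; your continuous-function route does not seem to access it.
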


Before proving this, we record the following consequence, announced earlier.

\begin{corollary}\label{cor.main}
Conditions (D) and (Lip$_p$), $p\ge 1$ for $\rho$ are totally ordered by strength, with (D) being the strongest and (Lip$_p$) being stronger for larger $p$. 
\end{corollary}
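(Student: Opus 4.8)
The plan is to assemble this from ingredients already in hand, with essentially no new argument required. The assertion that (Lip$_p$) is stronger for larger $p$ is precisely \Cref{cor.Lipp_implications}, so the only remaining content is that condition (D) implies (Lip$_p$) for every $p\ge 1$.

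For that implication I would proceed as follows. Assume $\rho$ is (D)-isometric. By \Cref{pr.Lipp_alt}, verifying (Lip$_p$) amounts to checking that $W_p(x\triangleleft\psi,y\triangleleft\psi)\le d(x,y)$ for all points $x,y\in X$ and all states $\psi\in S(A)$; so fix such $x,y,\psi$. \Cref{th.main} produces a coupling $\pi\in\Pi(x\triangleleft\psi,y\triangleleft\psi)$ supported on the set $\{(x',y')\in X\times X\ :\ d(x',y')=d(x,y)\}$. Feeding this $\pi$ into the infimum of \Cref{def.Wp} as a (possibly suboptimal) competitor,
\[
 W_p(x\triangleleft\psi,y\triangleleft\psi)^p\ \le\ \int_{X\times X} d(x',y')^p\ \text{d}\pi\ =\ d(x,y)^p,
\]
since the integrand is constantly $d(x,y)^p$ on the support of $\pi$ and $\pi$ is a probability measure. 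Taking $p$-th roots and invoking \Cref{pr.Lipp_alt} once more gives (Lip$_p$). As this holds for every $p\ge 1$, the tower (D) $\Rightarrow$ (Lip$_p$) $\Rightarrow$ (Lip$_q$) for $q\le p$ is established.

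I do not expect any genuine obstacle: \Cref{th.main} does all the work, and what is left is bookkeeping — the reduction to Dirac masses furnished by \Cref{pr.Lipp_alt}, and the elementary fact that a probability measure living on a level set of $d(\cdot,\cdot)^p$ integrates that function to its constant value. The one point worth noting is that the support condition in \Cref{th.main} is an exact equality $d(x',y')=d(x,y)$, not merely an inequality, which is exactly what makes the estimate above sharp simultaneously for every $p$.
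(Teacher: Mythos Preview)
Your proposal is correct and matches the paper's own proof essentially line for line: both invoke \Cref{cor.Lipp_implications} for the ordering among the (Lip$_p$), reduce (D) $\Rightarrow$ (Lip$_p$) to Dirac masses via \Cref{pr.Lipp_alt}, and then plug the coupling from \Cref{th.main} into the infimum defining $W_p$.
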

\begin{proof}
Since the relative strength of the various (Lip$_p$) conditions is covered by \Cref{cor.Lipp_implications}, we only need to show that (D) implies all (Lip$_p$). This follows immediately from \Cref{th.main}: 

According to \Cref{pr.Lipp_alt}, it is enough to prove that $W_p^p(x\triangleleft\psi,y\triangleleft\psi)\le d(x,y)^p$ for any choice of $x,y\in X$ and $\psi\in S(A)$. Denote $\Pi=\Pi(x\triangleleft\psi,y\triangleleft\psi)$ for some such choice. We have
\[
W_p^p(x\triangleleft\psi,y\triangleleft\psi)\quad =\quad \inf_{\widetilde{\pi}\in\Pi}\widetilde{\pi}\left(d^p\right)\quad \le\quad \pi(d^p)\quad =\quad d(x,y)^p,
\]
where $\pi$ is a coupling as in the statement of the theorem, concentrated on those pairs of points $d(x,y)$ apart. 
\end{proof}

In preparation for proving \Cref{th.main}, we place ourselves entirely within the $W^*$ setting as follows.

Suppose $\rho:\cC(X)\to \cC(X)\otimes A$ is a $C^*$ quantum action. Let $\overline{A}$ be the reduced $W^*$ quantum group constructed via the GNS representation of $A$ with respect to the Haar state $h$, as explained in \Cref{subse.prel_cqg}. By the universality property of the $W^*$ envelope $\cC(X)\to\cW(X)$, the composition
\begin{center}
\begin{tikzpicture}[auto]
  \node (1) at (0,0) {$\scriptstyle \cC(X)$};
  \node (2) at (2,0) {$\scriptstyle \cC(X)\otimes A$};
  \node (3) at (4.3,0) {$\scriptstyle \cW(X)\otimes\overline{A}$};
  \draw[->] (1) to node {$\scriptstyle \rho$} (2);
  \draw[->] (2) to (3);
\end{tikzpicture}
\end{center}  
factors uniquely through a $W^*$-algebra morphism $\cW(X)\to \cW(X)\otimes\overline{A}$ satisfying all of the requirements of a coaction.  

Consequently, we can (and will, throughout the proof of \Cref{th.main}) assume that $A$ is a reduced $W^*$ obtained by the GNS construction from a $C^*$ quantum group acting faithfully on $X$ in the sense of \Cref{def.cqg_act_bis,def.faithful}.
 
For every Borel subset $S$ of $X$, the characteristic function $\chi_S$ is well defined as an element of $\cW(X)$. For $x\in X$ we denote by $a_{x;S}$ the image of $\chi_S$ through the composition
\begin{center}
\begin{tikzpicture}[auto]
  \node (1) at (0,0) {$\scriptstyle \cW(X)$};
  \node (2) at (2,0) {$\scriptstyle \cW(X)\otimes A$};
  \node (3) at (4,0) {$\scriptstyle A$,};
  \draw[->] (1) to node {$\scriptstyle \rho$} (2);
  \draw[->] (2) to node {$\scriptstyle x\otimes\id$} (3);
\end{tikzpicture}
\end{center}  
where the $x$ over the second arrow stands for the measure concentrated at the point $x$ (so we regard it as an element of $S(A)$).

\begin{remark}\label{re.sigma_alg_hom}
Note that since both arrows in the diagram are von Neumann algebra morphisms and $\chi_S$ is a projection (i.e. a self-adjoint idempotent), so is $a_{x;S}$. 

In addition, for fixed $x\in X$, the map $S\mapsto a_{x;S}$ is a Boolean $\sigma$-algebra homomorphism from the Borel sets of $X$ onto some Boolean $\sigma$-subalgebra of the complete poset of all projections in $\cW(X)$. We use these observations repeatedly below. 
\end{remark}

We first phrase condition (D) in terms of the elements $a_{x;S}$. For $x\in X$ and $r\ge 0$ denote by $B(x,r)\subseteq X$ the closed ball of radius $r$ around $x$.

\begin{lemma}\label{le.D_in_terms_of_a}
The $W^*$ quantum action $\rho$ of $A$ on $X$ is (D)-isometric if and only if we have 
\begin{equation}\label{eq.D_in_terms_of_a}
 a_{x;B(y,r)} = \kappa(a_{y;B(x,r)}),\ \forall x,y\in X,\ \forall r\ge 0.
\end{equation}
\end{lemma}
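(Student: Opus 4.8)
The plan is to unwind \Cref{eq.D_isometric} by testing it against point evaluations and comparing with the definition of the elements $a_{x;S}$. Recall that for $f\in\cW(X)$, the notation $\rho(f)(x)$ means $(x\otimes\id)(\rho(f))\in A$, which is exactly how $a_{x;S}$ is defined for $f=\chi_S$. The key observation is that the distance function $d_y\in\cC(X)$ can be written as a (weak$^*$-convergent, or at least Riemann-Stieltjes-type) combination of characteristic functions of closed balls: indeed $d_y = \int_0^{\mathrm{diam}(X)}\chi_{X\setminus B(y,r)}\,\mathrm{d}r$, equivalently $d_y(z) = \int_0^\infty \bigl(1-\chi_{B(y,r)}(z)\bigr)\,\mathrm{d}r$. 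Since $\rho$ and the evaluation $x\otimes\id$ are both normal $*$-homomorphisms (hence commute with such bounded integrals against the Lebesgue measure in the parameter $r$), we get
\[
 \rho(d_y)(x) = \int_0^\infty \bigl(1 - a_{x;B(y,r)}\bigr)\,\mathrm{d}r,
\]
and similarly $\rho(d_x)(y) = \int_0^\infty \bigl(1 - a_{y;B(x,r)}\bigr)\,\mathrm{d}r$. Applying the antipode $\kappa$ (which is normal and linear, with $\kappa(1)=1$) to the latter and using \Cref{eq.D_isometric} yields
\[
 \int_0^\infty \bigl(1 - a_{x;B(y,r)}\bigr)\,\mathrm{d}r = \int_0^\infty \bigl(1 - \kappa(a_{y;B(x,r)})\bigr)\,\mathrm{d}r,
\]
which is one direction once we can ``differentiate in $r$'' to conclude \Cref{eq.D_in_terms_of_a}.

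For the forward implication (D) $\Rightarrow$ \Cref{eq.D_in_terms_of_a}, I would make the above rigorous by fixing $x,y$ and considering the two $A$-valued functions $r\mapsto a_{x;B(y,r)}$ and $r\mapsto \kappa(a_{y;B(x,r)})$. Both are monotone (the map $S\mapsto a_{x;S}$ is a Boolean $\sigma$-algebra homomorphism by \Cref{re.sigma_alg_hom}, so nested balls give nested projections; $\kappa$ reverses order but applied to a decreasing-in-complement family this works out), right-continuous in the appropriate sense with respect to increasing limits of sets, and their integrals over $[0,\infty)$ agree after composing with any state $\varphi\in S(A)$ — in fact we get equality of the scalar-valued integrals $\int_0^\infty \varphi(1-a_{x;B(y,r)})\,\mathrm{d}r = \int_0^\infty \varphi(1-\kappa(a_{y;B(x,r)}))\,\mathrm{d}r$ for all $\varphi$, and moreover this holds not just for the full integral but, by replacing $y$ with a point and rescaling, in enough localized form to force pointwise-in-$r$ equality. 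Cleanest is to test against an arbitrary $\varphi$ and an arbitrary interval: condition (D) applied with the metric $d$ replaced by its truncation $\min(d,t)$ (whose associated $d_y$-functions are $\int_0^t(1-\chi_{B(y,r)})\,\mathrm{d}r$) gives $\int_0^t\varphi(a_{x;B(y,r)})\,\mathrm{d}r = \int_0^t\varphi(\kappa(a_{y;B(x,r)}))\,\mathrm{d}r$ for every $t\ge 0$; differentiating in $t$ at every continuity point, then using right-continuity of both integrands (monotone limits of projections are strong limits, hence weak$^*$, so $\varphi$-continuous), yields $\varphi(a_{x;B(y,r)}) = \varphi(\kappa(a_{y;B(x,r)}))$ for all $r$. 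Since $\varphi\in S(A)$ was arbitrary and states separate points of $A$, we obtain \Cref{eq.D_in_terms_of_a}.

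The converse, \Cref{eq.D_in_terms_of_a} $\Rightarrow$ (D), is the easy direction and runs the integral identity the other way: assuming $a_{x;B(y,r)} = \kappa(a_{y;B(x,r)})$ for all $r$, integrate $1$ minus each side over $r\in[0,\infty)$ (the integrands vanish once $r$ exceeds $\mathrm{diam}(X)$, so there is no convergence issue), use normality of $\rho$, $x\otimes\id$, $y\otimes\id$ and $\kappa$ to identify the two sides with $\rho(d_y)(x)$ and $\kappa(\rho(d_x)(y))$ respectively, and read off \Cref{eq.D_isometric}.

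The main obstacle I anticipate is the ``differentiate in the parameter'' step: one must be careful that the functions $r\mapsto a_{x;B(y,r)}$ are only known to be monotone and to have the expected one-sided continuity coming from $\sigma$-additivity (balls $B(y,r) = \bigcup_{s<r}B(y,s)$ need not hold — closed balls are the decreasing intersections $B(y,r) = \bigcap_{s>r}B(y,s)$, giving right-continuity), so the almost-everywhere differentiation of a monotone function, together with the fact that two monotone functions with equal integrals over every initial segment agree at every common continuity point and hence (being monotone with at most countably many jumps, and both right-continuous) agree everywhere, needs to be spelled out. Everything else — the representation of $d_y$ as a layer-cake integral of ball indicators, the normality arguments, separation by states — is routine.
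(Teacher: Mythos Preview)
Your converse direction is fine, and the layer-cake representation of $d_y$ is correct. The problem is in the forward direction, at the step where you write ``condition (D) applied with the metric $d$ replaced by its truncation $\min(d,t)$.'' Condition (D) is a hypothesis about the single function $d$; nothing in the setup tells you it also holds for $\min(d,t)$. Without that, you only know equality of the \emph{full} integrals $\int_0^\infty$, which certainly does not force pointwise-in-$r$ equality of two monotone functions.

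The way to justify the truncation step is to notice that $f\mapsto\rho(f_y)(x)$ and $f\mapsto\kappa(\rho(f^x)(y))$ are both normal $*$-homomorphisms from $\cW(X)\otimes\cW(X)$ to $A$ (the second because $\kappa$, though anti-multiplicative, is restricted here to the commutative image of $(y\otimes\id)\circ\rho$). Two $W^*$-morphisms agreeing on the single element $d$ must agree on the entire von Neumann algebra it generates --- in particular on every Borel function of $d$, including $\min(d,t)$ and, more to the point, $\chi_{[0,r]}\circ d=\chi_{d^{-1}([0,r])}$. But once you see this, the whole integration/differentiation apparatus is superfluous: applying the two morphisms directly to $\chi_{d^{-1}([0,r])}$ gives $a_{x;B(y,r)}=\kappa(a_{y;B(x,r)})$ in one line. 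This is precisely the paper's argument. So your approach is not wrong, but the missing justification, once supplied, short-circuits the rest of the proof.
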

\begin{proof}
We write $\cW=\cW(X)$. For $f\in\cW\otimes\cW$ denote $f(x,-)\in\cW$ by $f_x$ and $f(-,x)\in W$ by $f^x$ (this matches the notation $d_x=d(x,-)$ from before, and $d^x=d_x$ because $d$ is symmetric). 

Note that for $x,y\in X$ both 
\[
 f\mapsto \rho(f_y)(x)
\] 
and 
\[
 f\mapsto \kappa(\rho(f^x)(y)) 
\]
are von Neumann algebra morphisms from $\cW\otimes\cW$ to $A$. Indeed, the only minor difficulty is that $\kappa$ \define{reverses} multiplication. Nevertheless, here we are restricting $\kappa$ the image of $(y\otimes\id)\circ\rho:\cW\to A$; since $\cW$ is commutative, so is its image.

With this observation in place, it follows that \Cref{eq.D_isometric} holds if and only if 
\begin{equation}\label{eq.D_isometric_gen}
 \rho(f_y)(x) = \kappa(\rho(f^x)(y))
\end{equation}  
holds for all $f$ in the von Neumann subalgebra of $\cW\otimes\cW$ generated by $d$, or equivalently, just for a set of generators of this $W^*$-algebra.

One such set of generators consists of the characteristic functions $\chi_{d^{-1}([0,r])}$, $r\ge 0$, where $d^{-1}([0,r])$ is the set of pairs of points at most $r$ apart. 

All that is left now is to observe that the left hand side of \Cref{eq.D_isometric_gen} applied to $f=\chi_{d^{-1}([0,r])}$ yields precisely $a_{x;B(y,r)}$, and similarly, its right hand side is $\kappa(a_{y;B(x,r)})$. 
\end{proof}

\begin{remark}\label{re.D_in_terms_of_a}
Let $R\subseteq\bR$ be a Borel subset, and denote by $B(x,R)\subseteq X$ the set of points in $X$ whose distance from $x\in X$ belongs to $R$. 

By an almost identical proof, the statement of \Cref{le.D_in_terms_of_a} holds with $B(x,R)$ and $B(y,R)$ substituted for $B(x,r)$ and $B(y,r)$ respectively. 
\end{remark}

\begin{lemma}\label{le.aa=0}
Assume the quantum action $\rho$ is (D)-isometric, and let $x,y\in X$. If $S,T\subseteq X$ are Borel subsets such that for some $\delta>0$ we have
\begin{equation}\label{eq.aa=0}
 |d(s,t)-d(x,y)|\ge\delta,\ \forall s\in S,\ t\in T,
\end{equation}
then $a_{x;S}a_{y;T}=0$. 
\end{lemma}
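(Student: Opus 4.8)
The idea is to use the $\sigma$-algebra homomorphism structure of $S'\mapsto a_{x;S'}$ (\Cref{re.sigma_alg_hom}) to reduce to the case where $S$ has small diameter, and then to ``transport'' the projection $a_{x;S}$ from ``$x$-space'' to ``$y$-space'' by invoking condition (D) twice, with a single application of the triangle inequality in between. Throughout set $r_0=d(x,y)$.

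\emph{Reduction.} Fix $\epsilon>0$ with $2\epsilon<\delta$. Cover the compact metric space $X$ by finitely many open $\epsilon$-balls and refine them to a finite Borel partition $X=\bigsqcup_i X_i$ with $\mathrm{diam}(X_i)<2\epsilon$. Since $S'\mapsto a_{x;S'}$ is a Boolean $\sigma$-algebra homomorphism, $a_{x;S}=\sum_i a_{x;S\cap X_i}$ (an orthogonal sum), so that $a_{x;S}\,a_{y;T}=\sum_i a_{x;S\cap X_i}\,a_{y;T}$. As each pair $(S\cap X_i,T)$ still satisfies \Cref{eq.aa=0} with the same $\delta$, it suffices to prove the lemma assuming in addition $\mathrm{diam}(S)<2\epsilon$ (the case $S=\emptyset$ being trivial). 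Fix any $s_0\in S$, so that $S\subseteq B(s_0,[0,2\epsilon)):=\{z\in X\mid d(s_0,z)<2\epsilon\}$; note this and the other balls below are Borel, being preimages of Borel subsets of $\bR$ under the continuous functions $d_{s_0},d_x,d_y$.

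\emph{Core chain.} I claim
\[
 \kappa(a_{x;S})\ \le\ a_{s_0;B(x,[0,2\epsilon))}\ \le\ a_{s_0;B(y,(r_0-2\epsilon,\,r_0+2\epsilon))}\ =\ \kappa\bigl(a_{y;B(s_0,(r_0-2\epsilon,\,r_0+2\epsilon))}\bigr).
\]
The first inequality comes from $S\subseteq B(s_0,[0,2\epsilon))$ and \Cref{re.D_in_terms_of_a} applied to the pair $(x,s_0)$ with $R=[0,2\epsilon)$, using that $\kappa$ is an order-preserving involution of $A$ (it is a positive bijection, since $\kappa(a^*a)=\kappa(a)\kappa(a)^*\ge 0$ and $\kappa^{-1}=\kappa$). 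The middle inequality is monotonicity of the projection-valued measure $S'\mapsto a_{s_0;S'}$ applied to the inclusion $B(x,[0,2\epsilon))\subseteq B(y,(r_0-2\epsilon,r_0+2\epsilon))$, which is just the triangle inequality $|d(y,z)-r_0|=|d(y,z)-d(y,x)|\le d(x,z)$. The last equality is \Cref{re.D_in_terms_of_a} for the pair $(s_0,y)$ with $R=(r_0-2\epsilon,r_0+2\epsilon)$. Applying $\kappa$ to the outer terms gives $a_{x;S}\le P$, where $P:=a_{y;B(s_0,(r_0-2\epsilon,r_0+2\epsilon))}$.

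\emph{Conclusion.} Taking $s=s_0$ in \Cref{eq.aa=0} shows $|d(s_0,t)-r_0|\ge\delta>2\epsilon$ for every $t\in T$, hence $T\cap B(s_0,(r_0-2\epsilon,r_0+2\epsilon))=\emptyset$ and therefore $a_{y;T}\le 1-P$, i.e.\ $P\,a_{y;T}=0$. Since $a_{x;S}\le P$ gives $a_{x;S}=a_{x;S}P$, we get $a_{x;S}\,a_{y;T}=a_{x;S}\,(P\,a_{y;T})=0$, as claimed.

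\emph{Where the difficulty lies.} The computation is short; the one genuinely non-obvious step is recognizing that condition (D) should be used not for the pair $(x,y)$ but for the two auxiliary pairs $(x,s_0)$ and $(s_0,y)$, with a small ball around $x$ (which the triangle inequality squeezes into a thin annulus around $y$) bridging them. Everything else — Borel measurability of the balls, order-preservation by $\kappa$, and the finiteness of the reduction — is routine, and, pleasantly, there is no need to distinguish the cases $d(s,t)\ge r_0+\delta$ and $d(s,t)\le r_0-\delta$, as the symmetric interval $(r_0-2\epsilon,r_0+2\epsilon)$ absorbs both.
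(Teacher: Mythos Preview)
Your proof is correct and follows essentially the same strategy as the paper's: reduce to $S$ of small diameter, pick an auxiliary point $s_0\in S$ (the paper uses a ball center $z$ with $S\subseteq B(z,r)$), apply condition~(D) via \Cref{re.D_in_terms_of_a} twice at the pairs $(x,s_0)$ and $(s_0,y)$, and bridge them with the triangle inequality. The only cosmetic difference is that you package the argument as an inequality chain $a_{x;S}\le P\perp a_{y;T}$ using order-preservation of $\kappa$, whereas the paper computes the product $a_{x;B(z,r)}a_{y;B(z,R)}$ directly via anti-multiplicativity of $\kappa$; these are equivalent maneuvers.
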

\begin{proof}
Fix a number $r>0$ such that $2r<\delta$. If we prove the conclusion under the assumption that $S$ is contained in a ball $B(z,r)$ of radius $r$, we can finish the proof of the lemma as follows: 

Partition $S$ into Borel subsets $S_i$ contained in balls of radius $r$. Since $a_{x;S_i}a_{y;T}=0$, the additivity of the map $Y\mapsto a_{x;Y}$ (i.e. disjoint union turns into addition) implies
\[
 a_{x;S}a_{y;T} = \sum_i a_{x;S_i}a_{y;T} = 0. 
\]

We henceforth assume $S\subseteq B(z,r)$ for some $z\in X$. This assumption and \Cref{eq.aa=0} together imply
\begin{equation}\label{eq.aa=0_bis}
 |d(z,t)-d(x,y)|\ge \delta-r,\ \forall t\in T.
\end{equation}
In other words, if $R$ is the interval $[d(x,y)-(\delta-r),d(x,y)+(\delta-r)]$ then, using the notation from \Cref{re.D_in_terms_of_a}, we have $T\subseteq B(z,R)$. 

It follows from the discussion above that the inequalities $a_{x;S}\le a_{x,B(z,r)}$ and $a_{y;T}\le a_{y;B(z,R)}$ hold in the poset of projections in $\cW$. In conclusion, it suffices to show that $a_{x;B(z,r)}a_{y;B(z,R)}$ vanishes. 

In the slightly more general version from \Cref{re.D_in_terms_of_a}, \Cref{le.D_in_terms_of_a} implies the first equality below:
\[
 a_{x;B(z,r)}a_{y;B(z,R)} = S(a_{z;B(x,r)})S(a_{z;B(y,R)}) = S(a_{z;B(y,R)}a_{z;B(x,r)}) = 0.
\]
The second equality is just the multiplication-reversing property of $S$, while the last one uses the fact that $B(x,r)$ and $B(y,R)$ are disjoint (because $2r<\delta$), and so $a_{z;B(x,r)}$ and $a_{z;B(y,R)}$ are orthogonal projections in $\cW$. 
\end{proof}

\begin{proof of main}
Fix $x,y\in X$ and $\psi\in S(A)$. We denote $x\triangleleft\psi$ and $y\triangleleft\psi$ by $\mu$ and $\nu$ respectively, and the set \Cref{eq.main} by $Y\subseteq X\times X$ (that is, the set of pairs of points in $X$ that are $d(x,y)$ apart). 

By the continuous version of Hall's theorem (\Cref{th.Hall}), we have to prove that \Cref{eq.Hall} holds:  
\[
 \nu\left(p_{12}^Y(S)\right)\ge\mu(S)\ \text{for every closed subset}\ S\subseteq X.
\]

Denote $T=p_{12}^Y(S)$. Unpacking the definitions, we have
\begin{equation}\label{eq.mu=psi(a)}
 \mu(S) = \psi(a_{x;S})\ \text{and}\ \nu(T) = \psi(a_{y;T}). 
\end{equation}
We'll get the desired conclusion that the left hand side of \Cref{eq.mu=psi(a)} is no larger than the right hand side if we show that $a_{x;S}\le a_{y;T}$ in $\cW$ or equivalently, that the product $a_{x;S}a_{y;X-T}$ is zero.

The set $X-T = X-p_{12}^Y(S)$ consists of those points in $X$ whose distance to every $s\in S$ is \define{not} $d(x,y)$. Since we've chosen $S$ to be closed, this means that $X-T$ is the union of the sets
\[
 U_n = \{z\in X\ |\ |d(z,s)-d(x,y)|\ge\frac 1n,\ \forall s\in S\}. 
\] 
By \Cref{le.aa=0} we know that $a_{x;S}a_{y;U_n}=0$ for every $n>0$. Since $a_{y;X-T}$ is the limit the $a_{y;U_n}$'s in the weak$^*$ topology on $\cW$, $a_{x;S}a_{y;X-T}=0$ follows from the weak$^*$ continuity of multiplication by a fixed element.  
\end{proof of main}

As mentioned before, all isometry conditions in this paper are known to be equivalent when the metric space $X$ is finite (this is \cite[Theorem 3.5]{QuaSab12}). As a consequence of this, we have

\begin{proposition}\label{pr.main_conv}
If the quantum group $A$ acting on $(X,d)$ is finite-dimensional, then all conditions (Lip$_p$) are equivalent, and equivalent to (D). 
\end{proposition}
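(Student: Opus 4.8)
The strategy is to deduce this from the finite case, \cite[Theorem 3.5]{QuaSab12}. By \Cref{cor.main} together with \Cref{cor.Lipp_implications}, the conditions in play already satisfy (D)$\Rightarrow$(Lip$_p$)$\Rightarrow$(Lip$_q$) for $q\le p$, so it suffices to prove that the weakest of them, (Lip$_1$) (equivalently (Lip), by \Cref{pr.Lip=Lip1}), implies the strongest, (D). Throughout I would use that a finite-dimensional $C^*$ quantum group $A$ is a Hopf $*$-algebra in its own right: the counit $\varepsilon$ and antipode $\kappa$ are defined on all of $A$, and the coaction $\rho:\cC(X)\to\cC(X)\otimes A$ makes $\cC(X)$ a counital comodule algebra, so in particular $(\id\otimes\varepsilon)\circ\rho=\id$.

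So assume $\rho$ is (Lip)-isometric and fix $x,y\in X$; by \Cref{def.D_isometric} we must show $\rho(d_y)(x)=\kappa(\rho(d_x)(y))$ in $A$. For $z\in X$ set $\phi_z:=(\mathrm{ev}_z\otimes\id)\circ\rho\colon\cC(X)\to A$, a unital $*$-homomorphism. Its image is a finite-dimensional commutative $C^*$-subalgebra of $A$, so $\phi_z$ factors as $\cC(X)\twoheadrightarrow\cC(Z_z)\hookrightarrow A$ for a finite closed subset $Z_z\subseteq X$ (with $|Z_z|\le\dim A$); moreover $z\in Z_z$, since $\mathrm{ev}_z=\varepsilon\circ\phi_z$ already factors through $\phi_z$. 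Coassociativity of $\rho$ yields the identity $(\phi_z\otimes\id)\circ\rho=\Delta\circ\phi_z$, whence $\rho(\ker\phi_z)\subseteq(\ker\phi_z)\otimes A$; that is, the finite set $Z_z$ is $\rho$-invariant and $\rho$ descends along $\cC(X)\twoheadrightarrow\cC(Z_z)$.

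Now put $X'=Z_x\cup Z_y$, a finite $\rho$-invariant subset containing $x$ and $y$, and let $\rho'$ be the descended coaction of $A$ on the finite metric space $(X',d|_{X'})$ (replacing $A$ by the quantum subgroup generated by the matrix coefficients of $\rho'$, which is again finite-dimensional and carries the restriction of $\kappa$, one may take $\rho'$ faithful). Writing $q\colon\cC(X)\twoheadrightarrow\cC(X')$ for the restriction map, one has $(\psi\triangleright)\circ q=q\circ(\psi\triangleright)$; since $q$ cannot increase Lipschitz constants while every real-valued $1$-Lipschitz function on $X'$ extends to one on $X$, condition (Lip) for $\rho$ restricts to (Lip) for $\rho'$. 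Hence \cite[Theorem 3.5]{QuaSab12} applies and $\rho'$ is (D)-isometric: $\rho'(q(d_y))(x)=\kappa(\rho'(q(d_x))(y))$. Finally $\mathrm{ev}_x\circ q=\mathrm{ev}_x$ for $x\in X'$, so the two sides of this identity are exactly $\rho(d_y)(x)$ and $\kappa(\rho(d_x)(y))$; this is (D) for $\rho$ at $(x,y)$, and $x,y$ were arbitrary. With (Lip)$\Rightarrow$(D) in hand, \Cref{cor.main} and \Cref{cor.Lipp_implications} collapse the whole tower. I expect the only delicate point to be this passage to and from the finite invariant subspace $X'$ — checking that $Z_z$ is genuinely $\rho$-invariant, that the descended coaction is again that of a finite-dimensional quantum group, and that (Lip) and the defining identity of (D) transfer correctly — everything else being routine once the comodule-algebra formalism for the finite-dimensional $A$ is set up.
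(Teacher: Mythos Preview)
Your proposal is correct and follows the same overall strategy as the paper: reduce the verification of (D) at a given pair $x,y$ to the restricted action on a finite $\rho$-invariant subset $X'\ni x,y$, and then invoke \cite[Theorem~3.5]{QuaSab12}.

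The one noteworthy difference is in how you establish that the orbits $Z_z$ (the paper's $O_z$) are $\rho$-invariant. The paper appeals to Skryabin's structure theorem \cite{Skr04} to conclude that the orbits partition $X$ and then leaves invariance to the reader; you instead give a direct argument from coassociativity, observing that $(\phi_z\otimes\id)\circ\rho=\Delta\circ\phi_z$ forces $\rho(\ker\phi_z)\subseteq(\ker\phi_z)\otimes A$ (the finite-dimensionality of $A$ making the tensor-kernel identification immediate). This is more elementary and self-contained, and in fact shows that the partition property is not actually needed for the argument---only invariance of each $Z_z$ and of finite unions thereof, which you also handle via $(\ker\phi_x\cap\ker\phi_y)\otimes A=(\ker\phi_x\otimes A)\cap(\ker\phi_y\otimes A)$. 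Your transfer of (Lip) down to $X'$ via McShane extension of real $1$-Lipschitz functions is the natural route (and is equivalent to transferring condition~(3) of \Cref{pr.Lip=Lip1}); the transfer of the (D) identity back up via $\mathrm{ev}_x\circ q=\mathrm{ev}_x$ is likewise straightforward. The parenthetical about passing to a faithful quantum subgroup is harmless but not strictly necessary here, since a finite-dimensional $A$ already has a globally defined antipode.
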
 
\begin{proof}
Since according to \Cref{cor.main} (Lip$_1$) is the weakest among the conditions and (D) the strongest, we only have to prove that the two are equivalent. 

For every point $x\in X$ the $C^*$-algebra map $\cC(X)\to A$ defined by 
\begin{center}
\begin{tikzpicture}[auto]
  \node (1) at (0,0) {$\scriptstyle \cC(X)$};
  \node (2) at (2,0) {$\scriptstyle \cC(X)\otimes A$};
  \node (3) at (4,0) {$\scriptstyle A$,};
  \draw[->] (1) to node {$\scriptstyle \rho$} (2);
  \draw[->] (2) to node {$\scriptstyle x\otimes\id$} (3);
\end{tikzpicture}
\end{center}
(where $\rho$ is the action) has a finite-dimensional commutative image. This image can be identified with $\cC(O_x)$ for a finite subspace $O_x\subset X$, the \define{orbit} of $x$ under the action. 

The closed subspaces $O_x$ can always be defined as we just have, but the finite-dimensionality of $A$ implies that orbits partition $X$. 

Indeed, when $A$ is finite-dimensional it is a Hopf algebra, as explained in \Cref{subse.prel_sp}. We can now apply standard results on Hopf algebra coactions on spaces; the claim that the $O_x$ partition $X$, for instance, follows from \cite[Theorem 3.3]{Skr04} (Skryabin writes $O(\mathfrak{p})$ for prime ideals $\mathfrak{p}$ in the algebra coacted upon by the Hopf algebra; we apply his results to the maximal ideals of $\cC(X)$, which are simply the points $x\in X$). 
We leave it to the reader to check that an orbit $O=O_x$ is invariant under $\rho$, in the sense that there is a factorization
\begin{center}
\begin{tikzpicture}[auto,baseline=(current  bounding  box.center)]
  \node (1) at (0,0) {$\scriptstyle \cC(X)$};
  \node (2) at (2,0) {$\scriptstyle \cC(X)\otimes A$};
  \node (3) at (4,0) {$\scriptstyle \cC(O)\otimes A$;};
  \node (4) at (2,-1) {$\scriptstyle \cC(O)$};
  \draw[->] (1) to node {$\scriptstyle \rho$} (2);
  \draw[->] (2) to node {} (3);
  \draw[->,bend right=30] (1) to node [swap] {} (4);
  \draw[->,bend right=30] (4) to node [swap] {} (3);
\end{tikzpicture}
\end{center}
of the horizontal composition through an action of $A$ on $\cC(O)$; the same goes for any finite union of orbits. This means that every finite subset of $X$ is contained in a $\rho$-invariant finite subset. Hence, when verifying either the (D)-isometry condition \Cref{eq.D_isometric} or the (Lip$_1$) condition as in (3) of \Cref{pr.Lip=Lip1}, we can restrict our attention to an action of $A$ on a finite subspace of $X$. But for such actions we know that the two conditions are equivalent from \cite{QuaSab12}. 
\end{proof}

It is natural now to conjecture that (Lip$_1$) implies (D). We propose the following version, that seems more tractable: The \define{conjunction} of all (Lip$_p$) implies (D). Let us rephrase this somewhat.

\begin{lemma}\label{le.all_Lipp}
Let $(X,d)$ be a compact metric space space, $\mu,\nu\in\pr(X)$ two probability measures, and $r\ge 0$. The following conditions are equivalent:
\begin{enumerate}
\renewcommand{\labelenumi}{(\arabic{enumi})}
 \item $W_p(\mu,\nu)\le r,\ \forall p\ge 1$;
 \item There is a coupling $\pi\in\Pi(\mu,\nu)$ supported on the set of pairs of points $\le r$ apart.  
\end{enumerate}
\end{lemma}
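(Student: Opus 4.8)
The plan is to prove the two implications separately, with $(2)\Rightarrow(1)$ being routine and $(1)\Rightarrow(2)$ the substantive one.

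For $(2)\Rightarrow(1)$: if $\pi\in\Pi(\mu,\nu)$ is supported on $Z_r:=\{(x,y):d(x,y)\le r\}$, then for every $p\ge 1$ we have $W_p(\mu,\nu)^p\le\int_{X\times X}d(x,y)^p\,\mathrm{d}\pi\le r^p$, since the integrand is bounded by $r^p$ on the support of $\pi$; take $p$-th roots.

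For $(1)\Rightarrow(2)$: the idea is to produce, for each $\varepsilon>0$, a coupling supported on the closed set $Z_{r+\varepsilon}$, and then extract a limit. Fix $\varepsilon>0$ and pick $p$ large (to be specified). By hypothesis $W_p(\mu,\nu)\le r$, so there is a coupling $\pi_p\in\Pi(\mu,\nu)$ with $\int d^p\,\mathrm{d}\pi_p\le r^p+\varepsilon^p$, say. By Markov's inequality,
\[
 \pi_p\bigl(\{d>r+\varepsilon\}\bigr)\le\frac{\int d^p\,\mathrm{d}\pi_p}{(r+\varepsilon)^p}\le\frac{r^p+\varepsilon^p}{(r+\varepsilon)^p},
\]
which tends to $0$ as $p\to\infty$ (the numerator is bounded by $2(r+\varepsilon)^p$ only crudely; more carefully, $r^p/(r+\varepsilon)^p\to0$ and $\varepsilon^p/(r+\varepsilon)^p\to0$). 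Hence there is a sequence $\pi_n\in\Pi(\mu,\nu)$ with $\pi_n(\{d>r+\varepsilon\})\to0$. The space $\Pi(\mu,\nu)\subseteq\pr(X\times X)$ is weak$^*$ compact (it is a weak$^*$-closed subset of the compact set $\pr(X\times X)$, the marginal constraints being weak$^*$-closed conditions), so a subnet of $(\pi_n)$ converges weak$^*$ to some $\pi\in\Pi(\mu,\nu)$. Since $Z_{r+\varepsilon}$ is closed, the weak$^*$ upper-semicontinuity of mass on closed sets gives $\pi(X\times X\setminus Z_{r+\varepsilon})\le\liminf\pi_n(X\times X\setminus Z_{r+\varepsilon})$; but $X\times X\setminus Z_{r+\varepsilon}\subseteq\{d>r+\varepsilon\}$ has $\pi_n$-mass tending to $0$, so $\pi$ is supported on $Z_{r+\varepsilon}$. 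Thus for every $\varepsilon>0$ there is a coupling $\pi^{(\varepsilon)}\in\Pi(\mu,\nu)$ supported on $Z_{r+\varepsilon}$. Finally, apply weak$^*$ compactness of $\Pi(\mu,\nu)$ once more: take $\varepsilon_k\downarrow0$, let $\pi$ be a weak$^*$ cluster point of $(\pi^{(\varepsilon_k)})$, and note that for each fixed $\varepsilon>0$ all but finitely many $\pi^{(\varepsilon_k)}$ are supported on the closed set $Z_\varepsilon$, whence $\pi$ is too; as this holds for every $\varepsilon>0$ and $\bigcap_{\varepsilon>0}Z_\varepsilon=Z_r$ (which is closed), $\pi$ is supported on $Z_r$, giving $(2)$.

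I expect the main obstacle to be the bookkeeping in the two successive compactness-and-limit arguments, in particular making sure the semicontinuity direction is the right one: weak$^*$ convergence only controls mass of closed sets from above, which is exactly what is needed here since we want to show the limiting coupling puts \emph{no} mass outside $Z_r$. One should double-check that $\Pi(\mu,\nu)$ is genuinely weak$^*$ closed (the conditions $\int f\circ p_1\,\mathrm{d}\pi=\int f\,\mathrm{d}\mu$ for $f\in\cC(X)$ are weak$^*$ continuous in $\pi$, so yes), and that $X\times X$ is metrizable compact so that all measures are Radon and sequential arguments suffice; but these are exactly the standing hypotheses of the section, so no genuine difficulty arises. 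Alternatively, one could invoke the lower semicontinuity of $W_p$ and a diagonal argument directly, but the Markov-inequality route above seems the cleanest.
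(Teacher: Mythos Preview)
Your argument is correct and follows essentially the same route as the paper: near-optimal couplings $\pi_p$, a Markov-type bound forcing $\pi_p(\{d>r+\varepsilon\})\to 0$, weak$^*$ compactness of $\Pi(\mu,\nu)$, and Portmanteau to control the support of the limit. The paper does this in one pass rather than two: it fixes a \emph{single} sequence $\pi_p$ with $\pi_p(d^p)^{1/p}\le r+\tfrac1p$, observes that $\pi_p(\{d\ge r+\varepsilon\})\to 0$ for \emph{every} $\varepsilon>0$ simultaneously, and concludes that any weak$^*$ cluster point of $(\pi_p)$ is already supported on $Z_r$. Your nested double-limit (first $p\to\infty$ for fixed $\varepsilon$, then $\varepsilon\downarrow 0$) reaches the same endpoint with a little extra bookkeeping.

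Two small slips to clean up: in the final paragraph you write ``supported on the closed set $Z_\varepsilon$'' where you mean $Z_{r+\varepsilon}$; and the inequality $\pi(U)\le\liminf\pi_n(U)$ you invoke is \emph{lower} semicontinuity on the \emph{open} set $U=X\times X\setminus Z_{r+\varepsilon}$, not upper semicontinuity on closed sets (the conclusion you draw is correct, only the label is off).
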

\begin{proof}
That (2) implies (1) is clear from \Cref{def.Wp}. 

Conversely, suppose (1) holds. For every $p\ge 1$, let $\pi_p$ be a $(\mu,\nu)$-coupling such that 
\begin{equation}\label{eq.all_Lipp}
 \pi_p(d^p)^{\frac 1p}\le r+\frac 1p.
\end{equation} 
For every $\varepsilon>0$, no matter how small, the $\pi_p$-measure of the set 
\[
 S_\varepsilon = \{(z,z')\in X\times X\ |\ d(z,z')\ge r+\varepsilon\} 
\]  
goes to zero as $p$ grows to infinity (or \Cref{eq.all_Lipp} would be contradicted). This means that any limit point in $\pr(X\times X)$ of $\pi_p$ as $p\to\infty$ will be supported on the complement of all $S_\varepsilon$. Since $\Pi(\mu,\nu)$ is closed in $\pr(X\times X)$, any such limit point meets the requirements of condition (2) from the statement.
\end{proof}

Consequently, the conjunction of all conditions (Lip$_p$) for a quantum action can be phrased in terms of supports of couplings as in the following conjecture, which seems  easier to handle than (Lip$1$) $\Rightarrow$ (D):

\begin{conjecture}
A faithful quantum action of $A$ on the compact metric space $(X,d)$ is (D)-isometric if for any $x,y\in X$ and $\psi\in S(A)$ there is a $(x\triangleleft\psi,y\triangleleft\psi)$-coupling supported on 
\[
 \{(z,z')\in X\times X\ |\ d(z,z')\le d(x,y)\}. 
\]
\end{conjecture}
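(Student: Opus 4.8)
\emph{A plan of attack.} First pass to the $W^*$ picture exactly as in the proof of \Cref{th.main}: replace $A$ by the reduced $W^*$ quantum group obtained via the Haar GNS construction and $\cC(X)$ by $\cW(X)$, and recall the projections $a_{x;S}\in A$ obtained by applying $(x\otimes\id)\circ\rho$ to the characteristic function $\chi_S\in\cW(X)$, noting that $S\mapsto a_{x;S}$ is a Boolean $\sigma$-algebra homomorphism for each fixed $x$. Since the action is faithful, $A$ is of Kac type, so $\kappa$ is a bounded, $*$-preserving, multiplication-reversing involution of $A$; in particular $\psi\circ\kappa\in S(A)$ whenever $\psi\in S(A)$. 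Now reformulate the hypothesis. Fix $x,y\in X$, set $r=d(x,y)$, $\mu=x\triangleleft\psi$, $\nu=y\triangleleft\psi$, and let $Y=\{(z,z')\ |\ d(z,z')\le r\}$; for closed $S$ the set $p_{12}^Y(S)$ is the closed $r$-neighbourhood $S^{+r}=\{z:d(z,S)\le r\}$. By the continuous Hall theorem (\Cref{th.Hall}) the coupling hypothesis for the pair $(x,y)$ and state $\psi$ is equivalent to $\nu(S^{+r})\ge\mu(S)$ for all closed $S$, i.e.\ to $\psi(a_{y;S^{+r}})\ge\psi(a_{x;S})$; letting $\psi$ range over all normal states gives the operator inequalities
\[
 a_{x;S}\le a_{y;S^{+d(x,y)}}\quad\text{in }\cW(X),\ \text{equivalently}\quad a_{x;S}\,a_{y;X\setminus S^{+d(x,y)}}=0,
\]
for every closed $S\subseteq X$ and every $x,y\in X$. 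Since $d$ is symmetric, exchanging the roles of $x$ and $y$ yields in addition $a_{y;T}\le a_{x;T^{+d(x,y)}}$ for every closed $T$.

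\emph{Deriving (D).} By \Cref{le.D_in_terms_of_a} and \Cref{re.D_in_terms_of_a} the action is (D)-isometric precisely when $a_{x;B(y,\varrho)}=\kappa(a_{y;B(x,\varrho)})$ for all $x,y\in X$ and all $\varrho\ge 0$. The inequalities just obtained are a one-sided analogue of \Cref{le.aa=0}: they force $a_{x;S}\,a_{y;T}=0$ as soon as $d(s,t)>d(x,y)$ for all $s\in S$, $t\in T$, but say nothing when these distances are uniformly \emph{smaller} than $d(x,y)$. So the plan is to run the argument of \Cref{th.main} backwards — produce the missing short-distance orthogonalities, thereby upgrade the coupling of $x\triangleleft\psi$ and $y\triangleleft\psi$ to one supported on the sphere $\{(z,z')\ |\ d(z,z')=d(x,y)\}$ as in \Cref{th.main}, and then use faithfulness of the action (the $a_{x;S}$ generate $A$) to pin down $\kappa(a_{y;B(x,\varrho)})$ from the resulting system of (in)equalities, reaching a general scale $\varrho$ by applying the hypothesis to auxiliary pairs of points at distance $\varrho$ and chaining. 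The natural source of the antipode is the classical model: there $\psi=\delta_g$, $\psi\circ\kappa=\delta_{g^{-1}}$, and the implication is proved by applying the hypothesis to the pair $(x\triangleleft\psi,y\triangleleft\psi)$ with the state $\psi\circ\kappa$, using $(\mu\triangleleft\psi)\triangleleft(\psi\circ\kappa)=\mu\triangleleft(\psi\cdot(\psi\circ\kappa))=\mu$ to recover $d(x\triangleleft\psi,y\triangleleft\psi)\ge d(x,y)$, hence equality.

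\emph{The main obstacle.} For a genuinely quantum $A$ the convolution product $\psi\cdot(\psi\circ\kappa)$ is \emph{not} the counit $\varepsilon$, so $(\mu\triangleleft\psi)\triangleleft(\psi\circ\kappa)\ne\mu$ in general and the reverse inequality $d(x\triangleleft\psi,y\triangleleft\psi)\ge d(x,y)$ — which is exactly what would turn the coupling on $\{d\le d(x,y)\}$ into one on the sphere $\{d=d(x,y)\}$ and hence, via faithfulness, produce the identities of \Cref{le.D_in_terms_of_a} — does not follow by this simple trick. Closing this gap is the crux. One would try to repair it by an averaging argument, integrating over $\psi$ against the Haar state $h$ and exploiting $h\varphi=\varphi h=h$ together with $h\circ\kappa=h$, or, more algebraically, by characterising $\kappa(a_{y;B(x,\varrho)})$ intrinsically as the unique projection compatible with the full family of one-sided relations above; making either precise is precisely what keeps the conjecture open.
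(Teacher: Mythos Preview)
The statement you are addressing is a \emph{conjecture} in the paper, not a theorem; the paper offers no proof of it, and indeed presents it precisely as the open problem of whether the conjunction of all conditions (Lip$_p$) implies (D). So there is no ``paper's own proof'' to compare against.

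Your write-up is not a proof either, and you are candid about this in the final paragraph. The preliminary reductions are fine: passing to the $W^*$ picture, translating the coupling hypothesis via \Cref{th.Hall} into the operator inequalities $a_{x;S}\le a_{y;S^{+d(x,y)}}$, and noting that \Cref{le.D_in_terms_of_a} is the target --- all of this is correct and is exactly the framework the paper sets up around \Cref{th.main}. You also correctly identify the asymmetry: the hypothesis gives orthogonality of $a_{x;S}$ and $a_{y;T}$ only when $d(S,T)>d(x,y)$, whereas (D) (via \Cref{le.aa=0}) requires it also when the distances are uniformly \emph{less} than $d(x,y)$.

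The genuine gap is the one you name yourself: there is no mechanism in your outline for producing the reverse inequality, and the two repairs you float (Haar averaging, or an ``intrinsic characterisation'' of $\kappa(a_{y;B(x,\varrho)})$ from the one-sided relations) are just gestures, not arguments. In particular, the Haar-averaging idea does not obviously help, since $h$ is a single state and the (D) condition must hold pointwise in $\psi$; and the ``intrinsic characterisation'' route would need some rigidity statement about projections in $A$ that you have not supplied. So the proposal is an accurate diagnosis of why the conjecture is hard, but it is not a proof, and the paper does not claim one.
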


We end this section with the following observation. It follows from \Cref{le.D_in_terms_of_a}, and it addresses an issue raised e.g. in \cite[3.6]{Gos12} as well as earlier in \cite{Wan98}: whether or not the structure map $\rho:\cC(X)\to \cC(X)\otimes A$ of a coaction is one-to-one. We show that this is indeed the case for (D)-isometric coactions.

\begin{proposition}\label{pr.injectivity} 
If the quantum action $\rho$ is (D)-isometric, then it is one-to-one. 
\end{proposition}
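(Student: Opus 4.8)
The plan is to show that the elements $a_{x;S}$, for $x\in X$ and $S$ a Borel subset of $X$, separate points of $\cC(X)$ in a suitable sense, and then use \Cref{le.D_in_terms_of_a} to get a handle on them. More precisely, suppose $\rho(f)=0$ for some $f\in\cC(X)$. Applying $x\otimes\id$ to $\rho(f)$ gives $\sum$-type expressions in the $a_{x;S}$; concretely, since $S\mapsto a_{x;S}$ is a Boolean $\sigma$-algebra homomorphism (\Cref{re.sigma_alg_hom}) and $\rho$ restricted via $x\otimes\id$ is a von Neumann algebra morphism $\cW(X)\to A$, we have that $(x\otimes\id)\rho(f) = \int_X f\,\mathrm{d}E_x$ where $E_x$ is the projection-valued measure $S\mapsto a_{x;S}$. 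So $\rho(f)=0$ forces $\int_X f\,\mathrm{d}E_x = 0$ for every $x\in X$.

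The key point where (D)-isometry enters: I want to show that for at least one $x$ (in fact for every $x$), the projection-valued measure $E_x$ is ``rich enough'' that $\int f\,\mathrm{d}E_x=0$ implies $f$ vanishes on the support of $E_x$, and that the supports of the $E_x$ cover $X$. The relation \Cref{eq.D_in_terms_of_a}, namely $a_{x;B(y,r)}=\kappa(a_{y;B(x,r)})$, is the tool: taking $y=x$ and letting $r\to\infty$ shows $a_{x;B(x,r)}\to 1$, so $E_x$ is a genuine (unital) projection-valued measure. More usefully, taking $r\to 0^+$ in $a_{x;B(x,r)}=\kappa(a_{x;B(x,r)})$ and using that $\kappa$ is normal and involutive, I would examine the projection $a_{x;\{x\}}=\lim_{r\to 0}a_{x;B(x,r)}$. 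The faithfulness of the action, together with the fact that $\{(x\otimes\id)\rho(g)\mid g\in\cW(X),\ x\in X\}$ generates $A$, should force that these projection-valued measures collectively detect everything; in particular if $f\ne 0$, pick $x$ in the (closed) set where $|f|$ attains its max, and argue that $a_{x;S}\ne 0$ for every neighborhood $S$ of $x$ — this is where I expect to invoke \Cref{le.D_in_terms_of_a} via the symmetry $a_{x;B(y,r)}=\kappa(a_{y;B(x,r)})$ to propagate non-vanishing of one $a_{x;S}$ to non-vanishing of others, and ultimately to rule out that $E_x$ is concentrated away from $x$.

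Concretely, here is the cleanest route I would try: show that $a_{x;\{x\}}\ne 0$ for every $x\in X$. Granting this, if $\rho(f)=0$ then $0 = (x\otimes\id)\rho(f)\cdot a_{x;\{x\}} = f(x)\,a_{x;\{x\}}$ (since the projection-valued measure is supported, after multiplying by $a_{x;\{x\}}$, at the single point $x$ where $f$ takes value $f(x)$), hence $f(x)=0$; as $x$ is arbitrary, $f=0$. To see $a_{x;\{x\}}\ne 0$: by \Cref{le.D_in_terms_of_a} with $y=x$, $a_{x;B(x,r)}=\kappa(a_{x;B(x,r)})$, so each $a_{x;B(x,r)}$ is $\kappa$-fixed; I would then use that the $a_{x;B(x,r)}$ decrease to $a_{x;\{x\}}$ as $r\downarrow 0$, that $\kappa$ is weak$^*$-continuous (normal), and that — were $a_{x;\{x\}}=0$ — the family $\{a_{x;B(x,r)}\}_{x,r}$ would ``miss'' a neighborhood of the diagonal, contradicting faithfulness of $\rho$ (which says the $a_{x;S}$, equivalently the $(x\otimes\id)\rho(g)$, generate $A$, combined with the non-degeneracy condition (2) of \Cref{def.cqg_act}).

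The main obstacle, as I see it, is precisely the step $a_{x;\{x\}}\ne 0$: it is conceivable a priori that the orbit projection-valued measure $E_x$ has no atom at $x$ itself (the ``orbit'' of $x$ could in principle avoid $x$). This is exactly the phenomenon that (D)-isometry should exclude, because it forces the distance-level-set structure to be respected, and $x$ is the unique point at distance $0$ from $x$; the formal argument will need to combine \Cref{le.D_in_terms_of_a} (and its $B(x,R)$ refinement in \Cref{re.D_in_terms_of_a}) with \Cref{le.aa=0} applied to $S=T=$ small complementary balls to squeeze $E_x$ onto $\{x\}$, then faithfulness to prevent $E_x$ from being supported on the empty set. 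If this atom-at-$x$ claim proves awkward, the fallback is to argue more globally: $f\mapsto (x\otimes\id)\rho(f)$ being a unital $\ast$-homomorphism $\cC(X)\to A$ with $\kappa$-symmetric range data, the joint kernel over all $x$ is a closed ideal of $\cC(X)$, hence $\cC(X\setminus U)$ for some open $U$; (D)-isometry plus faithfulness then force $U=\emptyset$, giving injectivity.
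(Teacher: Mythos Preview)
Your primary strategy hinges on the claim that $a_{x;\{x\}}\ne 0$ for every $x\in X$, but this is false even for classical isometric actions. Take $X=\bT$ with its usual metric, acting on itself by rotation. The associated $W^*$ quantum group is $\overline{A}=L^\infty(\bT)$, and for any $x\in\bT$ the element $a_{x;\{x\}}\in L^\infty(\bT)$ is the function $g\mapsto\chi_{\{x\}}(gx)$, i.e.\ the characteristic function of $\{1\}$, which is zero in $L^\infty(\bT)$. In general $a_{x;\{x\}}$ plays the role of the characteristic function of the stabilizer of $x$, and vanishes whenever that stabilizer is Haar-null. The devices you propose do not rescue this: $\kappa$-fixedness of $a_{x;B(x,r)}$ is compatible with the limit being $0$ (which is $\kappa$-fixed); \Cref{le.aa=0} with $x=y$ yields nothing beyond the orthogonality $a_{x;S}a_{x;T}=0$ for disjoint $S,T$ already contained in \Cref{re.sigma_alg_hom}; and faithfulness only says the $a_{x;S}$ collectively generate $A$, not that any specific one is nonzero.

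Your fallback is the correct starting point --- the kernel of $\rho$ is a closed ideal, so $\rho$ factors through $\cC(Z)$ for some closed $Z\subseteq X$ --- but you do not supply the mechanism forcing $Z=X$. The paper argues by contradiction: if $Z\subsetneq X$, pick $x\notin Z$ and a state $\psi\in S(A)$ extending a \emph{pure} state of the commutative image of $(x\otimes\id)\circ\rho$. Purity makes $x\triangleleft\psi$ a point $z$, necessarily in $Z$, and one has $\psi(a_{x;\{z\}})=1$. Now (D)-isometry via \Cref{le.D_in_terms_of_a} with $r=0$ gives $a_{x;\{z\}}=\kappa(a_{z;\{x\}})$, so $(\psi\circ\kappa)(a_{z;\{x\}})=1$, i.e.\ $z\triangleleft(\psi\circ\kappa)=x\notin Z$, contradicting the fact that every measure $\mu\triangleleft\varphi$ is supported on $Z$. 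The missing idea in your sketch is precisely this use of the antipode to reverse the action and push a point of $Z$ back out of $Z$.
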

\begin{proof}
We will make use of the associated $W^*$ action, as explained in preparation for the proof of \Cref{th.main}. 

Suppose $\rho:\cC(X)\to\cC(X)\otimes A$ is not one-to-one. It must then factor through $\cC(Z)$ for some proper closed subspace $Z\subset X$. In particular, for every $x\in X$ and $\psi\in S(A)$ the measure $x\triangleleft\psi$ is supported on $Z$. 

Take $x\in X-Z$, and let $\psi\in S(A)$ be an extension to the entire $A$ of a \define{pure} state on the commutative $C^*$-algebra $((x\otimes\id)\circ\rho)(\cC(X))$. The purity assumption implies that $x\triangleleft\psi$ is multiplicative, and hence pure on $X$; in other words, it is a point $z$, by necessity belonging to $Z$. 

It is easy to see that $x\triangleleft\psi=z$ is equivalent to $\psi(a_{x;z})=1$, where $a_{x;z}$ is shorthand for what above we would have called $a_{x;\{z\}}$. By \Cref{le.D_in_terms_of_a} (with $r=0$), we also have $(\psi\circ\kappa)(a_{z;x})=1$. In turn, this means $z\triangleleft(\psi\circ\kappa) = x\not\in Z$, contradicting the fact that every measure of the form $(\text{measure on }X)\triangleleft(\text{state on }A)$ is supported on $Z$.   
\end{proof}

\section{Largest quantum subgroup acting isometrically}\label{se.sbgp}

Classically, whenever a compact group acts on a compact metric space $(X,d)$, there is a largest (closed) subgroup whose action preserves $d$. This section is devoted to replicating this situation for (D)-isometric quantum actions (and partially for (Lip$_p$)-isometric ones). 

The compact metric space $(X,d)$ is fixed throughout. It will be convenient to make quantum actions on it in the sense of \Cref{def.cqg_act_bis} into a category as follows.

\begin{definition}\label{def.actions_category}
If $\rho_A$ and $\rho_B$ are $C^*$ quantum actions of $(A,\Delta_A)$ and $(B,\Delta_B)$ respectively on $X$ a \define{morphism} $\rho_A\to\rho_B$ is a $C^*$-algebra map $T:A\to B$ intertwining $\Delta_A$ and $\Delta_B$ and making the triangle 
\[
  \tikz[anchor=base]{
   \path (0,0) node (1) {$\scriptstyle \cC(X)$} +(3,1) node (2) {$\scriptstyle \cC(X)\otimes A$} +(3,-1) node (3) {$\scriptstyle \cC(X)\otimes B$};
   \draw[->] (1) -- (2) node[pos=.5,auto] {$\scriptstyle \rho_A$};
   \draw[->] (1) -- (3) node[pos=.5,auto,swap] {$\scriptstyle \rho_B$};
   \draw[->] (2) -- (3) node[pos=.5,auto] {$\scriptstyle \id\otimes T$};
  }
\]commute. 

We denote the category of $C^*$ actions on $X$ by $\cact$.  
\end{definition}

We usually label the morphism $\rho_A\to\rho_B$ by the same symbol as its underlying algebra map $A\to B$. 

Consider an action $\rho_A$ of a quantum group $(A,\Delta_A)$ on the compact metric space $(X,d)$. We seek a solution to the following universal problem:

\begin{definition}\label{def.iso_env}
A (D)-\define{isometric envelope} of $\rho_A$ is a (D)-isometric action $\rho_B$ on $(X,d)$ by $(B,\Delta_B)$ equipped with a morphism $T:\rho_A\to\rho_B$ in $\cact$ satisfying the following property: 

Every morphism $\rho_A\to\rho_C$ from $\rho_A$ into a (D)-isometric action factors uniquely through $T$.  
\end{definition}

\begin{remark}
As is usually the case with universal objects, if a (D)-isometric envelope exists it is unique up to unique isomorphism. 

Moreover, it is an easy exercise to show that the map $A\to B$ underlying a (D)-isometric envelope is onto. This is the reason why we sometimes also refer to $A\to B$ as a quantum subgroup: The contravariance of going from (quantum) groups to functions on them means that surjections of algebras correspond to embeddings of their ``dual'' objects. 
\end{remark}

Given a $C^*$ quantum action $\rho_A:\cC(X)\to\cC(X)\otimes A$, one possible way to go about constructing a (D)-isometric envelope would be to consider the quotient $A\to A/I$, where
\begin{equation}\label{eq.quasab_ideal}
 I=\bigcap_\pi\ker(\pi)\text{ for }\pi:\rho_A\to\rho_B\text{ in }\cact\text{ with (D)-isometric }\rho_B. 
\end{equation}
This is more problematic that it might seem. The issue is that it's unclear whether $A/I$ inherits a quantum group structure $\Delta:A/I\to (A/I)\otimes (A/I)$ from $A$ compatible with $\Delta_A$ via the projection $A\to A/I$. 

While attempting to construct (Lip$_1$)-isometric envelopes (defined by analogy to \Cref{def.iso_env}), this problem led the authors of \cite{QuaSab12} to introduce a stronger notion of isometry that is formally, at least, stronger than (Lip$_1$), for which it could be shown that the analogous quotient $A\to A/I$ \define{is} a quantum subgroup \cite[Theorem 5.12]{QuaSab12}. 

Actions satisfying this stronger property are called \define{full-isometric} in \cite[Definition 5.8]{QuaSab12}. It seems to be unknown whether in fact (Lip$_1$) is equivalent to being full-isometric. The aim of this section is to show that this issue can be gotten around in the (D) version of the problem. 

The main result of the section is:

\begin{theorem}\label{th.sbgp}
A faithful $C^*$ quantum action $\rho:\cC(X)\to\cC(X)\otimes A$ has a (D)-isometric envelope. 
\end{theorem}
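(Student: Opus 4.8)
The plan is to sidestep the difficulty flagged above — that the quotient $A \to A/I$ defined by \Cref{eq.quasab_ideal} may fail to carry a compatible comultiplication — by building the $(D)$-isometric envelope directly as a quotient Hopf-algebraic object, using the fact (recalled in \Cref{subse.prel_sp}) that a faithful action forces $A$ to be of Kac type and hence equipped with an antipode $\kappa$. Concretely, I would first show that the condition \Cref{eq.D_in_terms_of_a} of \Cref{le.D_in_terms_of_a}, rephrased via the elements $a_{x;B(y,r)}$, picks out a closed two-sided ideal $J \subseteq A$ in a way that is automatically compatible with the coalgebra structure. The key point is that each $a_{x;S}$ lives in (the $W^*$-completion of) $A$ and that $\rho$ being $(D)$-isometric is equivalent to the vanishing of all elements $a_{x;B(y,r)} - \kappa(a_{y;B(x,r)})$; so one is asking for the largest quotient quantum group through which $\rho$ descends and in which these elements die.

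The main steps, in order: (1) Define $J \subseteq A$ to be the smallest closed two-sided ideal such that $J$ is a \emph{coideal} in the appropriate sense — i.e.\ $\Delta(J) \subseteq J \otimes A + A \otimes J$ — \emph{and} such that the images of $a_{x;B(y,r)} - \kappa(a_{y;B(x,r)})$ in $A/J$ vanish for all $x,y \in X$, $r \ge 0$. One must check such a smallest ideal exists: intersect all closed ideals with these two properties, noting that $A$ itself qualifies, and that the intersection of coideals that are ideals is again a coideal ideal (this uses that $\Delta$ is continuous and that $\kappa$ restricted to the relevant commutative subalgebras is well behaved, exactly as in the proof of \Cref{le.D_in_terms_of_a}). (2) Set $B = A/J$ with $\Delta_B$ induced from $\Delta_A$; verify the quantum group axioms of \Cref{def.cqg} survive the quotient — coassociativity is automatic, and the density conditions pass to quotients. (3) Verify that $\rho$ descends to $\rho_B : \cC(X) \to \cC(X) \otimes B$: the composite $(\id \otimes q) \circ \rho$ (with $q : A \to B$) is a coaction, and one checks its faithfulness is inherited, or else replaces $B$ by the $C^*$-subalgebra generated by the slices $(\mu \otimes \id)\rho_B(f)$, which again carries the quantum group structure. (4) Invoke \Cref{le.D_in_terms_of_a} to conclude $\rho_B$ is $(D)$-isometric, since by construction the defining elements \Cref{eq.D_in_terms_of_a} vanish in $B$ (here one must be slightly careful that the elements $a_{x;B(y,r)}$ computed for $\rho_B$ are the images of those computed for $\rho$, which follows from functoriality of the $a_{x;S}$ construction in the quantum group). (5) Check the universal property: given any morphism $\rho_A \to \rho_C$ with $\rho_C$ being $(D)$-isometric, the underlying map $A \to C$ kills each $a_{x;B(y,r)} - \kappa(a_{y;B(x,r)})$ (since these vanish in $C$ by \Cref{le.D_in_terms_of_a} applied to $\rho_C$, again using functoriality) and its kernel is a coideal ideal, hence contains $J$; so the map factors uniquely through $q$, and the factorization is a morphism in $\cact$.

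The main obstacle I anticipate is step (1)–(2): ensuring that the ideal $J$ one wants is simultaneously a coideal, so that $A/J$ is genuinely a quantum group and not merely a $C^*$-algebra. The naive ideal $I$ of \Cref{eq.quasab_ideal} need not be a coideal — that is precisely the pathology \cite{QuaSab12} ran into — so the content is in showing that imposing the \emph{coideal} condition on top of "the $a$-elements vanish" does not force $J$ to be too large, i.e.\ that $A/J$ still admits a $(D)$-isometric coaction. The way to control this is to observe, using \Cref{le.aa=0} and the multiplicativity of $S \mapsto a_{x;S}$ from \Cref{re.sigma_alg_hom}, that the relations $a_{x;B(y,r)} = \kappa(a_{y;B(x,r)})$ are themselves "coherent" under $\Delta$: applying $\Delta_A$ to $a_{x;B(y,r)}$ produces an expression again built from the $a$-elements (via the coaction identity $(\rho \otimes \id)\rho = (\id \otimes \Delta)\rho$), so that the two-sided ideal \emph{generated} by the differences $a_{x;B(y,r)} - \kappa(a_{y;B(x,r)})$ is already a coideal, and one simply takes $J$ to be its closure. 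Verifying that last coherence computation — that $\Delta$ of a defining difference lies in $J \otimes A + A \otimes J$ — is the technical heart of the argument, and it is where the hypotheses (faithfulness, Kac type, the $W^*$ reformulation) are all genuinely used.
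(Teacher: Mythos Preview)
Your outline takes a genuinely different route from the paper, and the step you yourself flag as ``the technical heart'' --- showing that the two-sided ideal generated by the differences $a_{x;B(y,r)} - \kappa(a_{y;B(x,r)})$ is already a coideal --- is a real gap, not a detail. Applying the coaction identity gives $\Delta(a_{x;S}) = (x\otimes\id\otimes\id)(\rho\otimes\id)\rho(\chi_S)$, an element of $A\otimes A$ that is not in any evident way a combination of $a$-elements, and you still have to match it against $\Delta(\kappa(a_{y;B(x,r)})) = (\kappa\otimes\kappa)\,\tau\,\Delta(a_{y;B(x,r)})$ modulo $J\otimes A + A\otimes J$. Nothing in \Cref{le.aa=0} or \Cref{re.sigma_alg_hom} supplies that matching; those results concern products of $a$-elements, not their coproducts. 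There is also a level mismatch: the $a_{x;B(y,r)}$ live in the $W^*$-completion $\overline A$, not in $A$, so ``the closed ideal of $A$ they generate'' is not well-posed as written; at the very least you should switch to the $C^*$ generators $\rho(d_y)(x) - \kappa(\rho(d_x)(y))$, but the coideal verification remains just as opaque for those.

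The paper sidesteps the coideal issue by working on the dual side. It singles out the set $A^*_D\subseteq A^*$ of functionals $\psi$ for which \Cref{eq.D_isometric} holds after applying $\psi$ to both sides, and proves by a direct computation (\Cref{le.A^*_D_mult}) that $A^*_D$ is closed under the convolution product induced by $\Delta$. This lemma is the genuine replacement for your missing coideal check: multiplicative closure of a weak$^*$-closed subspace of $A^*$ is exactly what, dually, forces $\Delta$ to descend. One then shrinks $A^*_D$ to its largest $A$-bi-invariant subspace \Cref{eq.A^*_D_closure}; the annihilator of that subspace is the ideal $I$, and \Cref{le.B^*_alg} shows $(A/I)^*$ is still multiplicatively closed, whence $\Delta_A$ factors through $A/I\otimes A/I$. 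The universal property follows because any morphism $S:\rho\to\rho_C$ with $\rho_C$ (D)-isometric satisfies $S^*(C^*)\subseteq A^*_D$, and a one-line bimodule compatibility $aS^*(\psi)=S^*(S(a)\psi)$ then places $S^*(C^*)$ inside \Cref{eq.A^*_D_closure}. Your approach could in principle be completed, but doing so amounts to dualizing \Cref{le.A^*_D_mult}, and you have not indicated how.
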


Within the hypotheses of the theorem, we begin by isolating the set of ``good'' states on $A$, i.e. those that satisfy the relevant isometry condition:

\begin{definition}\label{def.D_iso_state}
A functional $\psi\in A^*$ is \define{(D)-isometric} if the equality in \Cref{eq.D_isometric} holds upon applying $\psi$ to both sides:
\begin{equation}\label{eq.sbgp_D}
(x\triangleleft\psi)(d_y) = \left(y\triangleleft\overline{\psi}\right)(d_x),\ \forall x,y\in X,
\end{equation}
where $\overline{\psi}=\psi\circ\kappa$. 

We denote the set of (D)-isometric functionals by $A^*_D$.
\end{definition}

\begin{lemma}\label{le.A^*_D}
In the setting of \Cref{th.sbgp}, the subset $A^*_D\le A^*$ is a weak$^*$-closed linear subspace. 
\end{lemma}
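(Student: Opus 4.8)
The plan is to show that $A^*_D$, defined by \Cref{eq.sbgp_D}, is cut out by a family of linear equations that are continuous for the weak$^*$ topology, so that it is automatically a weak$^*$-closed linear subspace. For each fixed pair $x,y\in X$, define the two functionals on $A$
\[
 \psi\longmapsto (x\triangleleft\psi)(d_y)\qquad\text{and}\qquad \psi\longmapsto (y\triangleleft(\psi\circ\kappa))(d_x).
\]
Unpacking the definition of $\triangleleft$, the first of these is $\psi\mapsto \psi\bigl(\rho(d_y)(x)\bigr)$, i.e. evaluation against the fixed element $\rho(d_y)(x)\in A$; hence it is a weak$^*$-continuous linear functional on $A^*$ (this is precisely what weak$^*$-continuity means). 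For the second, I would use that $\kappa:A\to A$ is a bounded (indeed, in the Kac type setting relevant here, $C^*$ or $W^*$) linear map, so $\psi\mapsto \psi\circ\kappa$ is weak$^*$-continuous on $A^*$; composing with evaluation against the fixed element $\rho(d_x)(y)$ again gives a weak$^*$-continuous linear functional. The difference of these two functionals is therefore a weak$^*$-continuous linear functional $\Phi_{x,y}$ on $A^*$, and by \Cref{def.D_iso_state}
\[
 A^*_D = \bigcap_{(x,y)\in X\times X}\ker\Phi_{x,y}.
\]
An arbitrary intersection of kernels of continuous linear functionals is a closed linear subspace, which is exactly the claim.

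Concretely, the steps are: (i) rewrite \Cref{eq.sbgp_D} as $\psi\bigl(\rho(d_y)(x)\bigr)=\psi\bigl(\kappa(\rho(d_x)(y))\bigr)$ using that $(z\triangleleft\varphi)(f)=\varphi(\rho(f)(z))$ and that $\overline\psi=\psi\circ\kappa$; note $\psi(\kappa(a))=(\psi\circ\kappa)(a)$; (ii) observe that for fixed $x,y$ this says $\psi$ annihilates the single fixed vector $\rho(d_y)(x)-\kappa(\rho(d_x)(y))\in A$; (iii) conclude $A^*_D=\{\psi\in A^*: \psi(a)=0\text{ for all }a\in V\}$, where $V\subseteq A$ is the linear span of the vectors $\rho(d_y)(x)-\kappa(\rho(d_x)(y))$, $x,y\in X$; (iv) remark that the annihilator $V^\perp\subseteq A^*$ of any subset $V\subseteq A$ is a weak$^*$-closed linear subspace, being an intersection of kernels of the weak$^*$-continuous evaluation functionals $\psi\mapsto\psi(a)$, $a\in V$. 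Linearity and weak$^*$-closedness both follow immediately.

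There is essentially no serious obstacle here; the only point that deserves a word is making sure the maps are well defined in the appropriate category. In the $C^*$ setting $\kappa$ need not be $C^*$-bounded in general, but the standing hypothesis of \Cref{th.sbgp} is that $\rho$ is faithful, so by the discussion preceding \Cref{conv.C*} the quantum group $A$ is of Kac type and $\kappa$ extends to a norm-continuous (involutive, multiplication-reversing) self-map of $A$; in particular $\psi\mapsto\psi\circ\kappa$ preserves $A^*$ and is weak$^*$-continuous. With that in hand the argument above goes through verbatim. (One could alternatively phrase everything on the $W^*$ level via the GNS construction, where $\kappa$ is automatically normal, but it is not needed.)
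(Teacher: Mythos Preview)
Your proof is correct and follows essentially the same approach as the paper's: both argue that the defining condition \Cref{eq.sbgp_D} is linear and weak$^*$-continuous in $\psi$, the paper by noting that $\psi\mapsto x\triangleleft\psi$ is the dual of a $C^*$-map and $\psi\mapsto\overline\psi$ is weak$^*$-continuous, you by explicitly identifying $A^*_D$ as the annihilator of the set $\{\rho(d_y)(x)-\kappa(\rho(d_x)(y)):x,y\in X\}\subset A$. Your version is slightly more concrete, and your remark on why $\kappa$ is bounded (via the Kac type consequence of faithfulness) is a useful clarification the paper leaves implicit.
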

\begin{proof}
Condition \Cref{eq.sbgp_D} is easily seen to be linear in $\psi$, hence the claim that $A^*_D$ is a linear subspace of $A^*$.  

To verify the closure claim, note that the map $A^*\to\cM(X)$ defined by $\psi\mapsto x\triangleleft\psi$ is continuous, being the dual of the $C^*$-algebra map
\begin{center}
\begin{tikzpicture}[auto]
  \node (1) at (0,0) {$\scriptstyle \cC(X)$};
  \node (2) at (2,0) {$\scriptstyle \cC(X)\otimes A$};
  \node (3) at (4,0) {$\scriptstyle A$.};
  \draw[->] (1) to node {$\scriptstyle \rho$} (2);
  \draw[->] (2) to node {$\scriptstyle x\otimes\id$} (3);
\end{tikzpicture}
\end{center}  
Since $\psi\mapsto\overline{\psi}$ is also weak$^*$ continuous, it follows that \Cref{eq.sbgp_D} is a closed condition in $\psi$. 
\end{proof}

In fact, we have more:

\begin{lemma}\label{le.A^*_D_mult}
The subspace $A^*_D\le A^*$ is closed under the multiplication induced by $\Delta_A:A\to A\otimes A$. 
\end{lemma}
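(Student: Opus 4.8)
The plan is to show that if $\psi,\varphi\in A^*_D$ then $\psi\varphi\in A^*_D$, where $\psi\varphi=(\psi\otimes\varphi)\circ\Delta_A$. The natural strategy is to unpack the defining condition \Cref{eq.sbgp_D} for the product functional in terms of the iterated coaction, using the coassociativity of $\rho$ together with the fact that the antipode $\kappa$ is an anti-homomorphism of Hopf $*$-algebras. Concretely, I would first record the key ``convolution identity'': for any $f\in\cC(X)$ and any two functionals $\psi,\varphi$, one has $x\triangleleft(\psi\varphi) = (x\triangleleft\psi)\triangleleft\varphi$ as measures on $X$, which is a direct consequence of the coaction axiom in \Cref{def.cqg_act} (the square relating $\rho$, $\rho\otimes\id$ and $\id\otimes\Delta$). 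This lets me rewrite the left-hand side of \Cref{eq.sbgp_D} for $\psi\varphi$ as $((x\triangleleft\psi)\triangleleft\varphi)(d_y)$.

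Next I would handle the antipode side. The crucial algebraic input is that $\overline{\psi\varphi} = (\psi\varphi)\circ\kappa = (\varphi\circ\kappa)(\psi\circ\kappa) = \overline{\varphi}\,\overline{\psi}$, since $\kappa$ reverses multiplication and hence reverses the order of convolution (this is the standard Hopf-algebra identity $\kappa\circ m = m\circ(\kappa\otimes\kappa)\circ\tau$ dualized; it holds at the $C^*$ level because $A$ is of Kac type, as guaranteed by the faithfulness hypothesis and the discussion in \Cref{subse.prel_sp}). So the right-hand side of \Cref{eq.sbgp_D} for $\psi\varphi$ becomes $\left(y\triangleleft(\overline{\varphi}\,\overline{\psi})\right)(d_x) = \left((y\triangleleft\overline{\varphi})\triangleleft\overline{\psi}\right)(d_x)$.

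Now I would run the computation. Starting from $((x\triangleleft\psi)\triangleleft\varphi)(d_y)$, I want to push the $\triangleleft\varphi$ and $\triangleleft\psi$ operations around using the hypothesis that each of $\psi,\varphi$ is (D)-isometric. The cleanest route is to expand $(x\triangleleft\psi)$ as a ``superposition of points'' --- more precisely, to use that \Cref{eq.sbgp_D} for a fixed (D)-isometric functional $\varphi$ says exactly that $\int d(z,y)\,\mathrm{d}(z\triangleleft\varphi) = \int d(z,x')\,\mathrm{d}(y\triangleleft\overline{\varphi})$ for all $x',y$, which is an identity of the bilinear pairing, and then integrate one such identity against the measure produced by $\psi$ and $\overline{\psi}$ respectively. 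Applying the (D)-isometry of $\varphi$ first (to move from $d_y$ integrated against $(x\triangleleft\psi)\triangleleft\varphi$ toward something integrated against $(\text{pt})\triangleleft\overline\varphi$), then the (D)-isometry of $\psi$, and finally reassembling via the convolution identity in reverse, should yield exactly $((y\triangleleft\overline{\varphi})\triangleleft\overline{\psi})(d_x)$. Care is needed because the two defining identities involve swapping the roles of the two variables $x,y$, so the bookkeeping of which functional acts on which slot, and in which order, is delicate; Fubini/linearity of the pairing handles the interchange of integrals since all measures are finite Borel.

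The main obstacle I anticipate is precisely this bookkeeping: making sure that the two applications of \Cref{eq.sbgp_D} compose in the correct order so that the antipodes come out as $\overline{\varphi}\,\overline{\psi}$ rather than $\overline{\psi}\,\overline{\varphi}$, and verifying that the intermediate expressions are legitimate (i.e. that the relevant functions such as $d_z$ lie in $\cC(X)$ and that the measures $x\triangleleft\psi$ etc. are genuine probability or at least finite complex measures so that the integrals converge and Fubini applies --- this is fine since $\psi,\varphi\in A^*$ are bounded). A secondary point to be careful about is whether the argument needs $\psi,\varphi$ to be states or merely bounded functionals; since \Cref{def.D_iso_state} is phrased for $\psi\in A^*$ and the claim is about the linear subspace $A^*_D$, I would make sure every step is valid for arbitrary bounded functionals, using only linearity of $\triangleleft$, $\triangleright$ and of the pairing, plus the anti-multiplicativity of $\kappa$ on $A$ (Kac type) --- none of which requires positivity.
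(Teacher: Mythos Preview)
Your proposal is correct and follows essentially the same route as the paper: both use the convolution identity $x\triangleleft(\psi\varphi)=(x\triangleleft\psi)\triangleleft\varphi$, the anti-multiplicativity $\overline{\psi\varphi}=\overline{\varphi}\,\overline{\psi}$, and an ``integrated'' version of \Cref{eq.sbgp_D} (the paper records this explicitly as \Cref{eq.sbgp_D_bis}) applied to the measure $x\triangleleft\psi$ in place of a point. The only cosmetic difference is that after one application of the integrated condition the paper lands on the symmetric double integral $\int_{X\times X} d(z,z')\,\mathrm{d}(x\triangleleft\psi_1)(z')\,\mathrm{d}(y\triangleleft\overline{\psi_2})(z)$ and observes its invariance under $x\leftrightarrow y$, $\psi_i\leftrightarrow\overline{\psi_j}$, whereas you plan to apply the condition a second time; both finish the argument equally well.
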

\begin{proof}
First, we need to recast \Cref{eq.sbgp_D} in a more convenient form. Averaging $x$ in that equation over $\mu\in\pr(X)$ and $y$ over $\nu\in\pr(X)$ we get 
\begin{equation}\label{eq.sbgp_D_bis}
(\mu\triangleleft\psi)(d_\nu) = \left(\nu\triangleleft\overline{\psi}\right)(d_\mu),
\end{equation}
where $d_\mu=\int_Xd_x\ \text{d}\mu$ is a $\cC(X)$-valued integral. The (D)-isometry condition \Cref{eq.sbgp_D} is equivalent to having \Cref{eq.sbgp_D_bis} for all probability measures $\mu,\nu$ on $X$.  

Now fix (D)-isometric functionals $\psi_i$, $i=1,2$ on $A$. We have to show that $\psi=\psi_1\psi_2$ satisfies \Cref{eq.sbgp_D} for any choice of $x,y\in X$. Consider the following computation:
\begin{multline}\label{mult.A^*_D_mult}
(x\triangleleft\psi_1\psi_2)(d_y) = ((x\triangleleft\psi_1)\triangleleft\psi_2)(d_y) \\ = \left(y\triangleleft\overline{\psi_2}\right)(d_{x\triangleleft\psi_1}) = \int_X d_{x\triangleleft\psi_1}(z)\ \text{d}\left(y\triangleleft\overline{\psi_2}\right)(z)\\ = \int_{X\times X}d(z,z')\ \text{d}(x\triangleleft\psi_1)(z)\ \text{d}\left(y\triangleleft\overline{\psi_2}\right)(z);
\end{multline}
the second equality uses the (D)-isometry of $\psi_2$ in the generalized form \Cref{eq.sbgp_D_bis} for $\mu=x\triangleleft\psi_1$ and $\nu=y$. 

The end result of \Cref{mult.A^*_D_mult} is invariant under the substitutions $x\leftrightarrow y$ and $\psi_i\leftrightarrow \overline{\psi_j}$ for $\{i,j\}=\{1,2\}$, so we would have obtained the same result had we started with $\left(y\triangleleft\overline{\psi_2}\overline{\psi_1}\right)$. Since the antipode $\kappa$ is comultiplication-reversing on $A$, the map $\varphi\mapsto\overline{\varphi}$ is multiplication-reversing on $A^*$. Hence, 
\[
 (x\triangleleft\psi_1\psi_2)(d_y) = \left(y\triangleleft\overline{\psi_1\psi_2}\right)(d_x),\ \forall x,y\in X. 
\]
This shows that $\psi_1\psi_2$ is (D)-isometric, as desired.   
\end{proof}

Multiplication on $A$ induces a left and a right action of $A$ on $A^*$ defined by 
\[
 (a,\psi) \mapsto a\psi = \psi(\bullet a)
\]
and respectively
\[
 (\psi,a) \mapsto \psi a = \psi(a\bullet)
\]
for $a\in A$ and $\psi\in A^*$, where $\bullet$ is a placeholder for a variable in $A$. 

Since $\psi\mapsto a\psi$ and $\psi\mapsto \psi a$ are weak$^*$ continuous operators on $A^*$ it follows from \Cref{le.A^*_D} that the space
\begin{equation}\label{eq.A^*_D_closure}
 \{\psi\in A^*\ |\ a\psi,\psi a\in A^*_D,\ \forall a\in A\}
\end{equation}
is weak$^*$-closed.

There is an inclusion-reversing one-to-one correspondence

\begin{equation}\label{eq.corresp}
\begin{tikzpicture}[auto,baseline=(current  bounding  box.center)]
  \node[text width=3cm] (1) at (0,0) {closed subspaces of $A$};
  \node[text width=4cm] (2) at (8,0) {weak$^*$-closed subspaces of $A^*$};
  \draw[->] (1) .. controls (2,1) and (4,1) .. node[text width=3.5cm]{$I\mapsto$ elements of $A^*$ annihilated by $I$} (2);
  \draw[<-] (1) .. controls (2,-1) and (4,-1) .. node[text width=3.5cm,below] {$W\mapsto$ elements of $A$ annihilated by $W$} (2);
\end{tikzpicture}
\end{equation}

Bilateral ideals in $A$ correspond to subspaces of $A^*$ invariant under the left and right actions by $A$. Since the weak$^*$-closed subspace \Cref{eq.A^*_D_closure} of $A^*$ is by definition left and right $A$-invariant, \Cref{eq.corresp} associates to it a closed ideal $I$ in $A$.

We henceforth denote by $T:A\to B=A/I$ the quotient $C^*$-algebra we have just introduced. It will turn out that $T$ is the underlying map of our (D)-isometric envelope of $\rho$. We work our way towards proving this through a series of auxiliary results.

\begin{lemma}\label{le.B^*_alg}
The subspace $B^*\le A^*$ is closed under multiplication. 
\end{lemma}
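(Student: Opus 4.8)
The plan is to first unwind what the subspace $B^*\le A^*$ actually is. By \Cref{eq.corresp}, $I$ is the pre-annihilator in $A$ of the weak$^*$-closed subspace \Cref{eq.A^*_D_closure}; and since $T:A\to B=A/I$ is a quotient map, $B^*$ sits inside $A^*$ as the annihilator $I^\perp$ of $I$. As \Cref{eq.A^*_D_closure} is weak$^*$-closed it equals its own double annihilator, so
\[
 B^*=\{\psi\in A^*\ |\ a\psi,\ \psi a\in A^*_D\ \text{for every}\ a\in A\},
\]
where $a\psi$ and $\psi a$ denote the module actions $\psi(\bullet\,a)$, $\psi(a\,\bullet)$ introduced just before \Cref{eq.A^*_D_closure}. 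So, fixing $\psi_1,\psi_2\in B^*$, the task reduces to showing $a(\psi_1\psi_2)\in A^*_D$ and $(\psi_1\psi_2)a\in A^*_D$ for every $a\in A$, where now $\psi_1\psi_2$ is the $\Delta_A$-convolution product on $A^*$.

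The key step is an identity expressing these in terms of the module actions, valid on the canonical dense Hopf $*$-subalgebra $\cA\subseteq A$, where $\Delta(a)$ is an honest finite sum. Writing $\Delta(a)=\sum_i b_i\otimes c_i$ with $b_i,c_i\in\cA$, expanding $\Delta(ba)=\Delta(b)\Delta(a)$ and pairing with $\psi_1\otimes\psi_2$ gives, for $a\in\cA$, $a(\psi_1\psi_2)=\sum_i (b_i\psi_1)(c_i\psi_2)$, and likewise $(\psi_1\psi_2)a=\sum_i (\psi_1 b_i)(\psi_2 c_i)$ from $\Delta(ab)=\Delta(a)\Delta(b)$. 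Since $b_i,c_i\in\cA\subseteq A$ and $\psi_1,\psi_2\in B^*$, each factor $b_i\psi_1$, $c_i\psi_2$ (respectively $\psi_1 b_i$, $\psi_2 c_i$) lies in $A^*_D$; by \Cref{le.A^*_D_mult} each product lies in $A^*_D$; and by \Cref{le.A^*_D} the finite sum does too. Hence $a(\psi_1\psi_2),(\psi_1\psi_2)a\in A^*_D$ for all $a\in\cA$.

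It then remains to extend from $\cA$ to $A$ by density: the maps $a\mapsto a(\psi_1\psi_2)$ and $a\mapsto (\psi_1\psi_2)a$ are norm-bounded linear maps $A\to A^*$, $\cA$ is norm-dense in $A$, and $A^*_D$ is weak$^*$-closed hence norm-closed by \Cref{le.A^*_D}; passing to norm-limits therefore gives $a(\psi_1\psi_2),(\psi_1\psi_2)a\in A^*_D$ for every $a\in A$, that is, $\psi_1\psi_2\in B^*$.

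I do not expect a serious obstacle here, as the content is essentially bookkeeping. The two points that deserve care are (i) confirming that $B^*$ really is the space \Cref{eq.A^*_D_closure} and not some larger annihilator — which uses precisely that \Cref{eq.A^*_D_closure} is weak$^*$-closed — and (ii) keeping the left/right conventions straight in the two displayed identities, which is exactly why the identity is stated only on $\cA$ and the closing density argument is needed.
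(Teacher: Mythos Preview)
Your proposal is correct and follows essentially the same approach as the paper. The only cosmetic difference is where the density argument is placed: the paper fixes an arbitrary $a\in A$ and approximates $\Delta(a)$ by finite sums of simple tensors in $A\otimes A$, whereas you first restrict $a$ to the dense Hopf $*$-subalgebra $\cA$ (where $\Delta(a)$ is already a finite sum) and then pass to general $a$ by norm density of $\cA$; both routes invoke \Cref{le.A^*_D_mult} for the products and \Cref{le.A^*_D} for the closure in exactly the same way.
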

\begin{proof}
$B^*$ is the annihilator
\[
 \mathrm{Ann}(I)=\{\psi\in A^*\ |\ \psi(I)=0\},
\] 
(i.e. the subspace of $A^*$ corresponding to $I$ through \Cref{eq.corresp}); it is also the space defined by \Cref{eq.A^*_D_closure}.

Let $\psi_i\in B^*$ for $i=1,2$, and denote $\psi=\psi_1\psi_2$. We have to show that for every $a\in A$, both $a\psi$ and $\psi a$ are elements of $A^*_D$. Let us focus on $\psi a$.

Just running through the definitions we get
\begin{equation}\label{eq.B^*_alg}
 \psi a = (\psi_1\psi_2)a = ((\psi_1\otimes\psi_1)\Delta(a))\circ\Delta:A\to\bC,
\end{equation}
where the juxtaposition $(\psi_1\otimes\psi_1)\Delta(a)$ signifies the right action of an element $\Delta(a)\in A\otimes A$ on a functional $\psi_1\otimes\psi_2:A\otimes A\to\bC$.

Since the algebraic tensor square of $A$ is dense in $A\otimes A$, we can find a sequence of finite sums $a_n=\sum a_{n,1}\otimes a_{n,2}, n\in\bZ_{\ge 0}$ converging to $a$ with $a_{n,i}\in A$ (we have suppressed summation indices to streamline the notation). But then \Cref{eq.B^*_alg} is the weak$^*$ (and in fact even norm) limit of 
\begin{equation}\label{eq.B^*_alg_bis}
 \left((\psi_1\otimes\psi_2)\left(\sum a_{n,1}\otimes a_{n,2}\right)\right)\circ\Delta = \sum (\psi_1 a_{n,1})(\psi_2 a_{n,2}). 
\end{equation}
The factors $\psi_i a_{n,i}$ on the right hand side of \Cref{eq.B^*_alg_bis} are in $A^*_D$ because $\psi_i$ are elements of $B^*=$ \Cref{eq.A^*_D_closure}. Since $A^*_D\le A^*$ is a subalgebra by \Cref{le.A^*_D_mult}, \Cref{eq.B^*_alg_bis} is in $A^*_D$. Finally, the fact that $A^*_D$ is closed in $A^*$ implies that the limit $\psi a$ of \Cref{eq.B^*_alg_bis} as $n\to\infty$ is also an element of $A^*_D$. 
\end{proof}

\begin{lemma}\label{le.B_is_qg}
The quotient $B=A/I$ has a coassociative comultiplication $\Delta_B:B\to B\otimes B$ making it into a quantum group, and such that $T$ is a morphism of quantum groups.  
\end{lemma}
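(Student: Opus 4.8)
The plan is to produce $\Delta_B$ by dualizing the multiplication on $B^*$. The key point is that $B^* \le A^*$ is a weak$^*$-closed subspace (it is $\mathrm{Ann}(I)$ via \Cref{eq.corresp}) which, by \Cref{le.B^*_alg}, is also closed under the multiplication on $A^*$ induced by $\Delta_A$. I would first record the standard duality: for a closed two-sided ideal $I \subseteq A$, one has $B = A/I$ and a canonical isometric identification $B^* \cong \mathrm{Ann}(I) \subseteq A^*$, under which the quotient map $T : A \to B$ dualizes to the inclusion $B^* \hookrightarrow A^*$. The composite $\Delta_A \circ T^{**}$-type considerations then have to be packaged so that $\Delta_A$ descends.

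The cleanest route is to show directly that $\Delta_A(I) \subseteq \ker(T \otimes T) = I \otimes B + B \otimes I$ (closure of the algebraic span in $A \otimes A$), which is exactly the condition needed for $\Delta_A$ to factor through a $C^*$-morphism $\Delta_B : B \to B \otimes B$ making the square with $T \otimes T$ commute; that $T$ is then a morphism of (would-be) quantum groups is immediate from this square. To prove the containment, I would argue by duality: $I \otimes B + B \otimes I$ is the preimage under $A \otimes A \to B \otimes B$ of $0$, so its annihilator in $(A\otimes A)^*$ contains $B^* \otimes B^*$ (more precisely, the weak$^*$-closed span thereof, which is $(B\otimes B)^*$ under the minimal tensor product — here one uses that $A$ is of Kac type / nuclear enough, or simply that for the purposes of pairing it suffices to test against functionals of the form $\varphi_1 \otimes \varphi_2$ with $\varphi_i \in B^*$). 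So it is enough to check that $(\varphi_1 \otimes \varphi_2)(\Delta_A(a)) = 0$ for all $a \in I$ and all $\varphi_1, \varphi_2 \in B^*$; but $(\varphi_1 \otimes \varphi_2) \circ \Delta_A = \varphi_1 \varphi_2$ is, by \Cref{le.B^*_alg}, again an element of $B^* = \mathrm{Ann}(I)$, so it annihilates $a \in I$. This gives $\Delta_A(I) \subseteq I \otimes B + B \otimes I$ and hence the existence of the $C^*$-morphism $\Delta_B$ with $(T \otimes T) \circ \Delta_A = \Delta_B \circ T$.

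It then remains to verify that $(B, \Delta_B)$ satisfies \Cref{def.cqg}. Coassociativity of $\Delta_B$ follows from coassociativity of $\Delta_A$ together with surjectivity of $T$ (and of $T \otimes T$, $T \otimes T \otimes T$): the two composites $B \to B^{\otimes 3}$ agree after precomposition with $T$, which is epic, so they agree. For the density (cancellation) conditions in part (2) of \Cref{def.cqg}, I would push them forward along $T \otimes T$: the linear span of $\{(a \otimes 1)\Delta_A(b)\}$ is dense in $A \otimes A$, and applying the surjective $C^*$-morphism $T \otimes T$ sends this set onto the linear span of $\{(T(a) \otimes 1)\Delta_B(T(b))\} = \{(c \otimes 1)\Delta_B(d) : c,d \in B\}$, whose closure therefore contains $T\otimes T$ of a dense set, hence is all of $B \otimes B$ (a surjective $C^*$-morphism has closed, hence full, image when the target is the whole algebra; more simply, continuous surjections send dense sets to dense sets). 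The other density condition is symmetric. Finally $T$ is a morphism of quantum groups by construction of $\Delta_B$ via the intertwining square.

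\textbf{Main obstacle.} The delicate point is the tensor-product duality step: identifying $(B \otimes B)^*$ (or at least enough of it) inside $(A \otimes A)^*$ to conclude $\Delta_A(I) \subseteq I \otimes B + B \otimes I$ from the scalar identity $(\varphi_1\otimes\varphi_2)\Delta_A = \varphi_1\varphi_2 \in B^*$. One must make sure that the minimal $C^*$-tensor product behaves well here — i.e. that $I \otimes B + B \otimes I$ is exactly the kernel of $T \otimes T$ (this is where a nuclearity/exactness input enters, but it is available since these are quantum groups acting on commutative algebras, and in any case the required statement is standard for quotients by ideals in the minimal tensor product) — and that testing against elementary tensors of functionals suffices to detect membership in that kernel. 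Once that bookkeeping is in place, everything else is a routine diagram chase using surjectivity of $T$.
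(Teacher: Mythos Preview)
Your argument is essentially the paper's: both show that $(T\otimes T)\circ\Delta_A$ kills $I$ by pairing against $\varphi_1\otimes\varphi_2$ with $\varphi_i\in B^*$ and invoking \Cref{le.B^*_alg}, then read off coassociativity and the density conditions from surjectivity of $T$. Your ``main obstacle'' is overstated, though: you do not need to identify $\ker(T\otimes T)$ as the closure of $I\otimes A + A\otimes I$ (an exactness statement; note also your expression $I\otimes B + B\otimes I$ does not sit inside $A\otimes A$), because all that is required is that the functionals $\varphi_1\otimes\varphi_2$ with $\varphi_i\in B^*$ separate points of $B\otimes B$, and this holds for the minimal tensor product with no nuclearity hypothesis --- the paper simply cites \cite[III.4.9 (ii)]{Tak02} for it.
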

\begin{proof}
All we have to do is show that the composition
\begin{equation}\label{eq.prefact_Delta}
\begin{tikzpicture}[auto,baseline=(current  bounding  box.center)]
  \node (1) at (0,0) {$\scriptstyle A$};
  \node (2) at (2,0) {$\scriptstyle A\otimes A$};
  \node (3) at (4,0) {$\scriptstyle B\otimes B$};
  \draw[->] (1) to node {$\scriptstyle \Delta_A$} (2);
  \draw[->] (2) to node {$\scriptstyle T\otimes T$} (3);
\end{tikzpicture}
\end{equation}  
factors as 
\begin{equation}\label{eq.fact_Delta}
\begin{tikzpicture}[auto,baseline=(current  bounding  box.center)]
  \node (1) at (0,0) {$\scriptstyle A$};
  \node (2) at (2,0) {$\scriptstyle A\otimes A$};
  \node (3) at (4,0) {$\scriptstyle B\otimes B$;};
  \node (4) at (2,-1) {$\scriptstyle B$};
  \draw[->] (1) to node {$\scriptstyle \Delta_A$} (2);
  \draw[->] (2) to node {$\scriptstyle T\otimes T$} (3);
  \draw[->,bend right=30] (1) to node [swap] {$\scriptstyle T$} (4);
  \draw[->,bend right=30,dashed] (4) to node [swap] {$\scriptstyle \Delta_B$} (3);
\end{tikzpicture}
\end{equation}  
everything else (coassociativity of $\Delta_B$, the density property (2) from \Cref{def.cqg} and the fact that $T$ intertwines $\Delta_A$ and $\Delta_B$) will follow.

We know from \Cref{le.B^*_alg} that $B^*$ is a sub-ring of $A^*$. The functional $\psi_1\psi_2$ produced by composing \Cref{eq.prefact_Delta} further with $\psi_1\otimes\psi_2$ for $\psi_i\in B^*$ is then an element of $B^*$ and hence vanishes on $I$. Since the algebraic tensor product $B^*\otimes B^*$ separates $B\otimes B$ (e.g. \cite[Theorem III.4.9 (ii)]{Tak02}), it follows that \Cref{eq.prefact_Delta} vanishes on $I$ and hence factors as \Cref{eq.fact_Delta}.   
\end{proof}

\begin{proof of sbgp}  
As announced above, $T:A\to B$ will induce the desired (D)-isometric envelope $T:\rho\to\rho_B$, where $\rho_B$ is the composition
\begin{center}
\begin{tikzpicture}[auto,baseline=(current  bounding  box.center)]
  \node (1) at (0,0) {$\scriptstyle \cC(X)$};
  \node (2) at (2,0) {$\scriptstyle \cC(X)\otimes A$};
  \node (3) at (5,0) {$\scriptstyle \cC(X)\otimes B$.};
  \draw[->] (1) to node {$\scriptstyle \rho$} (2);
  \draw[->] (2) to node {$\scriptstyle \id\otimes T$} (3);
\end{tikzpicture}
\end{center}
To see this, consider a morphism $S:\rho\to\rho_C$ in $\cact$, where $\rho_C:\cC(X)\to \cC(X)\otimes C$ is a (D)-isometric action. All we have to do is show that the underlying map $S:A\to C$ factors uniquely through $T$. 

Uniqueness is immediate from the fact that $T$ is onto, so the important part is the existence of a factorization
\begin{center}
\begin{tikzpicture}[auto,baseline=(current  bounding  box.center)]
  \node (1) at (0,0) {$\scriptstyle A$};
  \node (2) at (2,0) {$\scriptstyle B$};
  \node (3) at (4,0) {$\scriptstyle C$.};
  \draw[->] (1) to node {$\scriptstyle T$} (2);
  \draw[->,dashed] (2) to node {} (3);
  \draw[->,bend right=30] (1) to node [swap] {$\scriptstyle S$} (3);
\end{tikzpicture}
\end{center}  
To prove this existence, note first that the dual $S^*$ maps $C^*$ into $A^*_D$. Indeed, $S^*$ is compatible with the actions of $C^*$ and $A^*$ on $\pr(X)$ in the sense that $x\triangleleft S^*(\psi)=x\triangleleft\psi$ for every $\psi\in C^*$; since all $\psi\in C^*$ are (D)-isometric, so are all functionals $S^*(\psi)$. 

Moreover, we for each $a\in A$ and $\psi\in C^*$ we have $aS^*(\psi) = S^*(S(a)\psi)$ (and similarly for right actions). We've just seen that the right hand side of this equality lies in $A^*_D$; since this happens for arbitrary $a$, it follows that $S^*(\psi)$ belongs to $B^*=$ \Cref{eq.A^*_D_closure}. We are now done: The dual of $S$ factors through $B^*\le A^*$, so $S$ must factor through $T:A\to B$.  
\end{proof of sbgp}

I do not know whether all quantum actions have (Lip$_p$)-isometric envelopes. Nevertheless, we now know that they do when the acting quantum group is finite-dimensional.

\begin{corollary}\label{cor.sbgp}
If the quantum group $A$ acting on $X$ via $\rho$ is finite-dimensional, then $\rho$ has a (Lip$_p$)-isometric envelope for every $p\ge 1$.

All of these envelopes coincide with the (D)-isometric one from \Cref{th.sbgp}. 
\end{corollary}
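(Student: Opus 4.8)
The envelope produced by \Cref{th.sbgp} will already be the object we want; finite-dimensionality enters only through \Cref{pr.main_conv}. First I would invoke \Cref{th.sbgp} to obtain the (D)-isometric envelope $T\colon\rho\to\rho_B$ with $B=A/I$. Since $B$ is a quotient of the finite-dimensional algebra $A$ it is itself finite-dimensional, so \Cref{pr.main_conv} applies to the action $\rho_B$: being (D)-isometric, $\rho_B$ is also (Lip$_p$)-isometric for every $p\ge 1$. Hence $\rho_B$ is an admissible target in the (Lip$_p$)-analogue of \Cref{def.iso_env}, and all that is left is to upgrade its universal property from (D)-isometric targets to (Lip$_p$)-isometric ones.

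For this, fix $p\ge 1$ and a morphism $S\colon\rho\to\rho_C$ in $\cact$ with $\rho_C$ (Lip$_p$)-isometric. The crucial reduction is that one may assume the target finite-dimensional. Replace $C$ by its image $C':=S(A)$; this is a $C^*$-subalgebra which is a quantum subgroup of $C$, since $\Delta_C\circ S=(S\otimes S)\circ\Delta_A$ forces $\Delta_C(C')\subseteq C'\otimes C'$, and $S$ intertwines the antipodes, so that $\kappa_C$ restricts to $C'$. Writing $S$ as the composite $A\xrightarrow{S'}C'\hookrightarrow C$, the corestriction $\rho_{C'}:=(\id\otimes S')\circ\rho\colon\cC(X)\to\cC(X)\otimes C'$ is again a coaction — the density requirement survives because, $C'$ being finite-dimensional, the associated Galois-type map on $\cC(X)\otimes C'$ is bijective with inverse built from $\kappa_{C'}$ — and it is (Lip$_p$)-isometric: every state of $C'$ extends to a state of $C$, and such a state acts on $\pr(X)$ through $\rho_{C'}$ exactly as its extension acts through $\rho_C$, so $W_p$ is contracted. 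Because $C'$ is finite-dimensional, \Cref{pr.main_conv} now upgrades this to: $\rho_{C'}$ is (D)-isometric.

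With a (D)-isometric target in hand I would rerun the factorization step from the proof of \Cref{th.sbgp}, with $C'$ in place of $C$. Since $\rho_{C'}$ is (D)-isometric, every functional on $C'$ is (D)-isometric in the sense of \Cref{def.D_iso_state} — just apply it to both sides of \Cref{eq.D_isometric} — and $(S')^*$ carries (D)-isometric functionals on $C'$ to (D)-isometric functionals on $A$, because $x\triangleleft(S')^*(\varphi)=x\triangleleft\varphi$ and $\overline{(S')^*(\varphi)}=(S')^*(\overline{\varphi})$. Together with $a\,(S')^*(\varphi)=(S')^*(S'(a)\varphi)$ and $(S')^*(\varphi)\,a=(S')^*(\varphi\,S'(a))$ for $a\in A$, this places $(S')^*\bigl((C')^*\bigr)$ inside the subspace \Cref{eq.A^*_D_closure}; equivalently, $S'\colon A\to C'$ kills the ideal $I$ and therefore factors, uniquely since $T$ is onto, through $T\colon A\to B$. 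Composing with $C'\hookrightarrow C$ factors $S$ through $T$. Thus $T$ is a (Lip$_p$)-isometric envelope for every $p$; as envelopes are unique up to unique isomorphism and the single morphism $T$ serves every $p$ as well as the (D) case, all of these envelopes coincide.

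The only part that is not routine bookkeeping layered on \Cref{th.sbgp} and \Cref{pr.main_conv} is the reduction to the finite-dimensional target $C'$, namely verifying that $\rho_{C'}$ remains a coaction and remains (Lip$_p$)-isometric; I expect this to be the main, though mild, obstacle. Alternatively, if the (Lip$_p$)-envelope formalism is set up with the same faithfulness convention that \Cref{subse.prel_metric} imposes on (D)-isometric actions, then faithfulness of $\rho_C$ already forces $C=S(A)$ and the reduction is automatic.
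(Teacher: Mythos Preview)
Your argument is correct and follows the same route as the paper: factor the morphism $S$ through the finite-dimensional image $C'=S(A)$, use \Cref{pr.main_conv} on that finite-dimensional target to upgrade (Lip$_p$) to (D), and then invoke the universality of the (D)-isometric envelope from \Cref{th.sbgp}. The paper's proof is simply terser --- it asserts the existence of the finite-dimensional quantum subgroup $C'$ without spelling out the Hopf-algebraic checks, and it appeals directly to the universal property of $T$ rather than re-deriving the factorization as you do; also, for the preliminary observation that $\rho_B$ is (Lip$_p$)-isometric you could have cited \Cref{cor.main} (no finite-dimensionality needed) instead of \Cref{pr.main_conv}.
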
 
\begin{proof}
Let $S:\rho\to \rho_C$ be a map in $\cact$ to a (Lip$_p$)-isometric action by $C$. The finite-dimensionality assumption implies that $S$ factors through an action $\rho_{C'}$ by some finite-dimensional quantum subgroup $C'\le C$. Moreover, $\rho_{C'}$ itself will be by necessity (Lip$_p$)-isometric and hence, by \Cref{pr.main_conv}, also (D)-isometric. But now the universality property of the (D)-isometric envelope implies that $S$ factors (uniquely) through it, as desired. 
\end{proof}

\begin{remark}
Of course, the same holds when $A$ is arbitrary but $X$ is finite, since in that case all notions of isometric action on $X$ coincide. 
\end{remark}

The idea behind the proof of \Cref{th.sbgp} is a familiar one: The first subspace $A^*_D\le A^*$ we isolated was weak$^*$ and multiplicatively closed, so it should be though of as dual to a ``quotient coalgebra'' of $A$, whose coaction on $\cC(X)$ has the desired property. We then had to slim down $A^*_D$ further to $B^*$ in order to pass to a ``Hopf algebra'' quotient of this preliminary ``coalgebra''.

The quotes are meant to indicate that the objects in question are not literally a coalgebra and Hopf algebra respectively, but rather functional-analytic counterparts (they are complete with respect to certain norms, the tensor products involved are completed, etc.). Nevertheless, one might hope to replicate the pattern to construct (Lip$_P$)-isometric envelopes.   

To do this, we would have to again isolate a weak$^*$ and multiplicatively closed set of functionals in $A^*$ satisfying the (Lip$_p$) condition. The problem is that this condition is phrased in terms of states rather than functionals. The best we can do is

\begin{definition}\label{def.Lipp_iso_state}
A state $\psi\in S(A)$ is \define{(Lip$_p$)-isometric} if 
\begin{equation}\label{eq.Lipp_iso_state}
W_p(x\triangleleft\psi,y\triangleleft\psi)\le d(x,y),\ \forall x,y\in X. 
\end{equation}

We denote the set of (Lip$_p$)-isometric states by $S_p(A)$. 
\end{definition}

It turns out that $S_p(A)$ is a compact convex sub-semigroup of $S(A)$, and hence its weak$^*$-closed linear span is a subalgebra of $A^*$ which we could use as the analogue of $A^*_D$ in the proof of \Cref{th.sbgp}. However, there might be, in principle, non-(Lip$_p$)-isometric states in this subspace. It is tempting to conjecture that this cannot happen:

\begin{conjecture}
Let $A$ act faithfully on the compact metric space $(X,d)$, and $p\ge 1$. 

The set of states in the weak$^*$-closed linear span of the set $S_p(A)$ from \Cref{def.Lipp_iso_state} coincides with $S_p(A)$. 
\end{conjecture}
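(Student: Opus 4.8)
\medskip\noindent\emph{Towards a proof.}
Write $V$ for the weak$^*$-closed linear span of $S_p(A)$, so that the inclusion $S_p(A)\subseteq V\cap S(A)$ is immediate; the whole content of the conjecture is the reverse inclusion, i.e.\ that a state lying in $V$ is automatically (Lip$_p$)-isometric. The first thing I would do is to \emph{linearize} the defining condition. By the Kantorovich duality computation already carried out in the proof of \Cref{pr.Lipp_alt}, for any $x,y\in X$ and any state $\psi$,
\[
 W_p^p(x\triangleleft\psi,y\triangleleft\psi)=\sup_{(f,g)\in P}\psi\big((x\otimes\id)\rho(f)+(y\otimes\id)\rho(g)\big),
\]
where $P=\{(f,g)\in\cC(X)^{\oplus 2}\ :\ f(u)+g(v)\le d(u,v)^p\ \text{for all}\ u,v\}$, and each map $\psi\mapsto\psi\big((x\otimes\id)\rho(f)+(y\otimes\id)\rho(g)\big)$ is weak$^*$-continuous and linear on all of $A^*$. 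Thus $S_p(A)$ is the intersection of $S(A)$ with a weak$^*$-closed convex set cut out by a family of linear inequalities, and what must be shown is that this set already contains $V\cap S(A)$.

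\medskip\noindent
The plan is then to promote $V$ to the dual of an honest quantum subgroup, in parallel with \Cref{se.sbgp}. Since $V$ is a weak$^*$-closed subalgebra of $A^*$, one forms
\[
 W=\{\psi\in A^*\ :\ a\psi,\ \psi a\in V\ \text{for all}\ a\in A\};
\]
running the arguments of \Cref{le.B^*_alg,le.B_is_qg} verbatim, $W$ is a weak$^*$-closed $A$-subbimodule and subalgebra, hence the annihilator of a closed ideal $I$ such that $B=A/I$ inherits a quantum group structure and a coaction $\rho_B$ on $X$, with $A\to B$ a morphism in $\cact$. The states of $B$ pull back precisely to $W\cap S(A)\subseteq V\cap S(A)$, so the conjecture would follow from the conjunction of (i) $\rho_B$ is (Lip$_p$)-isometric, which forces $W\cap S(A)\subseteq S_p(A)$, and (ii) $V\cap S(A)\subseteq W$, i.e.\ every state in $V$ factors through $B$. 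For (ii) the natural attempt is to prove the stronger statement $S_p(A)\subseteq W$ — that $a\psi$ and $\psi a$ lie in $V$ whenever $\psi\in S_p(A)$ — by expressing these functionals, after polarizing $a$ and replacing $a\psi$ by genuine positive functionals of the form $a^{1/2}\psi a^{1/2}$ rescaled to states, as weak$^*$-limits of linear combinations of (Lip$_p$)-isometric states. I do not see a clean way to carry this through, but I expect it to be the more tractable of the two gaps.

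\medskip\noindent
The main obstacle is (i). Unlike the purely algebraic identity (D), the Wasserstein contraction condition is not visibly inherited by the quotient $B$: a state of $B$ is a weak$^*$-limit of \emph{signed} combinations $\sum_i\lambda_i\psi_i$ of elements of $S_p(A)$, and although optimal transport plans behave well under \emph{convex} combinations — by joint convexity of $(\mu,\nu)\mapsto W_p^p(\mu,\nu)$, the mechanism already used in \Cref{cor.Lipp_implications} and \Cref{le.all_Lipp} — they do not survive cancellation, so a priori the negative parts of the $\psi_i$ could conspire to make $x\triangleleft\psi$ a genuine probability measure whose transport cost to $y\triangleleft\psi$ exceeds $d(x,y)^p$ even though each $x\triangleleft\psi_i$ transports cheaply to $y\triangleleft\psi_i$. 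What is really wanted is a linear-in-$\psi$ device producing, for each pair $x,y$, an actual coupling of $x\triangleleft\psi$ and $y\triangleleft\psi$: classically, when $A=\cC(G)$, the pushforward of $\psi$ along $t\mapsto(xt,yt)$ does exactly this, but the would-be quantum analogue $\psi\big((x\otimes\id)\rho(f)\cdot(y\otimes\id)\rho(g)\big)$ fails to be a positive functional because $(x\otimes\id)\rho(f)$ and $(y\otimes\id)\rho(g)$ need not commute — bridging that gap is, I believe, the heart of the matter. A Choquet-type shortcut (showing that the extreme points of $V\cap S(A)$ already lie in $S_p(A)$) does not rescue the argument either: $W_p^p$ is convex but not affine, so $S_p(A)$ need not be a face of $S(A)$, and an extreme point of the larger set inherits no transport bound for free.
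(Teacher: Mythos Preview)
The statement you were asked to prove is a \emph{conjecture} in the paper; the author offers no proof and explicitly presents it as open, remarking only that ``if this were true, the proof of \Cref{th.sbgp} would go through and (Lip$_p$)-isometric envelopes would exist.'' So there is no proof in the paper to compare your attempt against.

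Your write-up is accordingly honest: it is labeled ``Towards a proof,'' and you explicitly isolate two gaps, (i) that the quotient coaction $\rho_B$ be (Lip$_p$)-isometric and (ii) that $V\cap S(A)\subseteq W$, neither of which you resolve. Your diagnosis of the central obstruction --- that the Wasserstein contraction condition, unlike the algebraic identity (D), is an inequality that does not survive signed linear combinations, and that the na\"ive quantum coupling $\psi\big((x\otimes\id)\rho(f)\cdot(y\otimes\id)\rho(g)\big)$ fails positivity because the two factors need not commute --- is exactly the difficulty the paper is pointing at when it says ``there might be, in principle, non-(Lip$_p$)-isometric states in this subspace.'' Your Kantorovich-duality linearization and the parallel with \Cref{se.sbgp} are the natural moves, and the paper itself suggests using the weak$^*$-closed span of $S_p(A)$ as ``the analogue of $A^*_D$.'' In short: your exploration is sound and matches the paper's own framing of the problem, but it is not a proof, and you correctly flag it as such.
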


If this were true, the proof of \Cref{th.sbgp} would go through and (Lip$_p$)-isometric envelopes would exist for all faithful $C^*$ quantum actions.



\bibliographystyle{plain}
\addcontentsline{toc}{section}{References}

\def\polhk#1{\setbox0=\hbox{#1}{\ooalign{\hidewidth
  \lower1.5ex\hbox{`}\hidewidth\crcr\unhbox0}}}

\end{document}